\numberwithin{equation}{section}
\headsep \linespread{1.1} \textheight 24cm
\newtheorem{lemma}{Lemma}[section]
\newtheorem{definition}{Definition}[section]
\newtheorem{theorem}[definition]{Theorem}
\newtheorem{remark}{Remark}[section]
\begin{document}
\title{Vanishing Shear Viscosity Limit and Boundary Layer Study on the  Planar MHD system}
\author{Xulong Qin$^a$,\quad Tong Yang$^b$\thanks{E-mail address:   matyang@cityu.edu.hk},\quad Zheng-an Yao$^a$, \quad Wenshu Zhou$^c$ \\
  \small  a. Department of Mathematics, Sun Yat-sen University, Guangzhou 510275,  China\\
  \small  b. Department of Mathematics, Jinan University, Guangzhou, P. R. China;\\
  \small \& Department of Mathematics, City University of Hong Kong, Hong Kong, China\\
  \small c. Department of Mathematics, Dalian Minzu University,  Dalian 116600, China\\}

 %\pagestyle{myheadings}
%\markboth{Zero shear viscosity limit and boundary layers}
%{Wenshu Zhou and Xulong Qin}

\date{}
 \maketitle

\begin{abstract}
\small{We consider an initial boundary problem for the planar MHD system under the general condition on the heat conductivity $\kappa$
that may depend on both the density $\rho$ and the temperature $\theta$  satisfying
$
\kappa(\rho,\theta)\geq\kappa_1 \theta^{q}$ for some
constants $\kappa_1>0$ and $q>0.$
Firstly, the global existence of strong solution for large initial
data is obtained,  and   then the limit of the vanishing shear viscosity
is justified. In addition,   the   $L^2$ convergence rate  is obtained together with the estimation on the thickness of the boundary layer. }

\medskip
 {\bf Keywords}.  MHD system, global existence, vanishing shear viscosity, boundary layer.

\medskip
 {\bf 2010 MSC}. 35B40; 35B45; 76N10; 76N20;  76W05; 76X05.
\end{abstract}
%\footnotetext{2011 Mathematics Subject Classification. 82C40, 82C10, 35Q20  }

%
%\tableofcontents
\bigbreak

\section{Introduction}
 The planar Magnetohydrodynamics (MHD) system  with constant longitudinal magnetic field  is governed by the following equations:

 \begin{equation}\label{e1}
  \left\{\begin{split}
 &\rho_t+(\rho u)_x=0,\\
 &(\rho u)_t+\left(\rho u^2+p+\frac12 |\mathbf{b}|^2\right)_x
 =(\lambda u_{x})_x,\\[1mm]
 &(\rho \mathbf{w})_t+(\rho u \mathbf{w}-\mathbf{b})_x=(\mu\mathbf{w}_{x})_x,\\[1mm]
 &\mathbf{b}_t+(u\mathbf{b}-\mathbf{w})_x=(\nu\mathbf{b}_{x})_x,\\[1mm]
& (\rho e)_t+(\rho  u e)_x-(\kappa e_x)_x+pu_x= \lambda u_x^2+\mu
|\mathbf{w}_x|^2+\nu |\mathbf{b}_x|^2.
 \end{split}\right.
 \end{equation}
 Here  $\rho$ denotes the density, $\theta$  the temperature,  $u\in \mathbb{R}$
  the longitudinal velocity, $\mathbf{w}=(w_1,w_2)\in \mathbb{R}^2$ the transverse velocity,
  $\mathbf{b}=(b_1,b_2)\in \mathbb{R}^2$ the transverse magnetic field, $p=p(\rho,\theta)$ the pressure,
  $e=e(\rho,\theta)$ the internal energy, and $\kappa=\kappa(\rho,\theta)$   the heat conductivity respectively.
 The coefficients $\lambda, \mu$ and $\nu$ are assumed to be positive constants, where $\lambda$ and $\mu$ are the viscosity coefficients, and $\nu$ is the magnetic diffusivity. And the state equations are
 \begin{equation}\label{e2}
 p=\gamma\rho \theta, \qquad e=c_v\theta,
 \end{equation}
with constants $\gamma>0$ and $c_v>0$. Without loss of
generality, set $c_v=1$.  Based on some
physical models in which  $\kappa$ grows like $\theta^q $, for example,
% $q=2.5$ for important physical regimes and
 $q \in [4.5, 5.5]$ for molecular diffusion in gas (see \cite{ZYY}), we assume that $\kappa=\kappa(\rho,\theta)$ is twice
differential in $\mathbb{R}^+\times\mathbb{R}^+$ and satisfies
\begin{equation}\label{kappa}
\kappa(\rho,\theta)\geq \kappa_1 \theta^q \quad\hbox{\rm with
 constants}~ \kappa_1>0 ~\hbox{\rm and}~ q>0.
\end{equation}

In this paper, we consider system \eqref{e1} in a bounded domain $Q_T=\Omega\times (0, T)$ with $\Omega=(0, 1)$
under the following initial and boundary conditions:
\begin{equation}\label{e4}
\left\{
\begin{array}{lllllll}
(\rho,u,\theta, \mathbf{w},\mathbf{b})(x,0)=(\rho_0,u_0,\theta_0,\mathbf{w}_0,\mathbf{b}_0)(x),\\[1mm]
(u, \mathbf{b},\theta_x)|_{x=0, 1}=\mathbf{0},\quad \mathbf{w}(0,t)=\mathbf{w}^-(t),\quad\mathbf{w}(1,t)=\mathbf{w}^+(t).
\end{array}
\right.
\end{equation}
We aim to study the global existence, vanishing shear viscosity
limit, convergence rate and boundary layer effect
 of solutions to problem \eqref{e1}-\eqref{e4} with large initial data under the  condition \eqref{kappa}.

Because of its physical importance and
mathematical challenge, the MHD system  has been extensively
studied, see
\cite{B,CW1,CW2,Wang,CD,HW2,VH,KO,LL,LZ} and the references
therein.  Without  magnetic effect, MHD system is
reduced to the compressible Navier-Stokes equations that are better understood
mathematically.
For example, in  one space dimension,
  there is a seminal work  by
Kazhikhov and Shelukhin \cite{KS} on the global existence of strong
solutions for the compressible Navier-Stokes equations with constant coefficients  and large initial data. However, the corresponding result for the
MHD system with constant   coefficients remains well known unsolved problem.

 On the other hand, as for
  the well-posedness theory of MHD,  Vol'pert and Hudjaev \cite{VH}  firstly proved the existence and uniqueness of local smooth solutions, and then
 the global existence of smooth solution with small initial data was established in \cite{KO}. In addition,
  under the following condition on $\kappa$:
 \begin{equation}\label{assumption4}
\begin{split}
 C^{-1}(1+\theta^q)\leq \kappa(\rho,\theta) \leq C (1+\theta^q),~~q>0,
\end{split}
\end{equation}
 the existence  of global solution to the problem \eqref{e1}-\eqref{e4} with large  initial
data was studied in \cite{CW1, Wang} for $q\geq 2$,\cite{FanJiang} for $q\geq 1$, and \cite{FHL,HJ} for $q>0$. In fact, the
condition like \eqref{assumption4}  was also used  in other
 papers, cf.  \cite{Wang,CW2,FanJiang,FHL,K,JZ} and references  therein.   In this paper, we will firstly
  show the global existence of strong solution to the problem \eqref{e1}-\eqref{e4}
under  the condition \eqref{kappa}.

 The problem of vanishing viscosity has been an interesting and
 challenging problem in many setting, in particular with boundary, for example, in the boundary layer theory (cf. \cite{Sch}).
And there are many mathematical results on this problem, cf.  \cite{S,JZ, FS,FS1,QYYZ,YaoZhangZhu}  for the work on
 Navier-Stokes equations, and \cite{FanJiang, FHL}  for the problem \eqref{e1}-\eqref{e4}. As a second result of this paper, we
 will justify such limit in term of vanishing of shear viscosity and
describe the  convergence of $\mathbf{w}$ and $\mathbf{b}$ under the condition \eqref{kappa}.

Now we briefly review some related works on the
  boundary layer theory that is  one of the fundamental
problems in fluid dynamics established by
Prandtl in 1904. Without the magnetic effect, Frid and Shelukhin  \cite{FS1}  investigated the
boundary layer effect of the compressible isentropic Navier-Stokes
equations  with cylindrical symmetry, and proved the existence of
boundary layers    thickness  in the order of  $O(\mu^{\alpha}) (0<\alpha<1/2)$.
Recently, this result was investigated
in a more general setting,  cf. \cite{JZ, QYYZ} for the non-isentropic case and
\cite{YaoZhangZhu} for the case with density-dependent viscosity. With the magnetic field, the authors in \cite{YZ} studied the problem on boundary layer for the isentropic planar MHD system with the constant initial data and obtained the same thickness of boundary layer estimate as   in \cite{FS1,JZ,QYYZ}.
%To the best of our knowledge, however, there is  no corresponding results   for the initial boundary problem \eqref{e1}--\eqref{e4}.
 In this paper, we extend this result to problem \eqref{e1}-\eqref{e4}
 in general setting by introducing some new analytic technique in obtaining the
 $L^p$  estimate %estimnate
 on the second derivative of the velocity field by using
 theories for linear parabolic equations.

 % Besides, it is   observed that the rotation of the magnetic filed also has a boundary layer with the same thickness as that of boundary layer of the transverse velocity.
  %We point out that our technique is different from that used in the paper mentioned above such that the argument  is more suitable to the general initial data.

 In the following, some notation will be used. Firstly, denote $Q_t=\Omega\times(0, t)$ for $t\in (0,T]$. For  integer $k\geq0$,   constant $p\geq1$ and    $\mathcal{O}\subset\mathbb{R}^n$, $W^{k,p}(\mathcal{O})$ and $W_0^{k,p}(\mathcal{O})$  denote the usual Sobolev spaces.  $L^p(I,B)$ is the
space of all strong measurable, $p^{th}$-power
integrable (essentially bounded if $p=\infty$) functions from $I$ to
$B$, where $I\subset \mathbb{R}$ and $B$ is a Banach space. For
simplicity, we also use the notation $\|(f, g,
\cdots)\|^2_{B}=\|f\|^2_{B}+\|g\|_{B}^2+\cdots$ for  $f,
g,\cdots$ belonging to
  $B$ equipped with a norm $\|\cdot\|_{B}$.

The initial and boundary functions are assumed to satisfy
 \begin{equation}\label{assumption1}
\left\{\begin{split}
&\rho_0>0,\,\,\theta_0>0,\,\,\, \|(\rho_0^{-1},\theta_0^{-1})\|_{C(\overline{\Omega})}<\infty,
\,\,\|(\mathbf{w}^-,\mathbf{w}^+)\|_{C^1[0,T]}<\infty, \\[1mm]
&(\rho_0,\mathbf{w}_0,\theta_0)\in W^{1,2}(\Omega),\,\,\mathbf{b}_0\in W_0^{1,2}(\Omega),
\,\, u_0\in W_0^{1,2}(\Omega)\cap W^{2,m}(\Omega),~ m\in (1, +\infty),\\[1mm]
&u_0(1)=u_0(0)=0,\,\,\mathbf{w}_0(0)=\mathbf{w}^-(0),\,\,\mathbf{w}_0(1)=\mathbf{w}^+(0).
\end{split}\right.
\end{equation}

Then the first result of this paper  can be stated as
follows.

\begin{theorem}\label{existencethm}
Let \eqref{kappa} and \eqref{assumption1} hold. Then

(i)~~ For any fixed $\mu>0$, problem
\eqref{e1}--\eqref{e4} admits a unique strong solution
$(\rho,u,\mathbf{w},\mathbf{b},\theta)$  satisfying
\begin{equation*}
\begin{split}&\inf_{Q_T}\rho>0,~~\inf_{Q_T}\theta>0,\quad (\rho, u, \mathbf{w},\mathbf{b},\theta)\in L^{\infty}(Q_T),\quad\rho \in L^{\infty}(0,T;W^{1,2}(\Omega)),\quad \rho_t \in L^{2}(Q_T),\\
&(u, \mathbf{w},\mathbf{b},\theta) \in L^{\infty}(0,T;W^{1,2}(\Omega))\cap L^{2}(0,T;W^{2,2}(\Omega)),\quad (u_t, \mathbf{w}_t,\mathbf{b}_t,\theta_t) \in L^{2}(Q_T).
\end{split}
\end{equation*}
 Moreover, there exists a positive constant $C$ independent of $\mu$ such that
\begin{equation}\label{ve}
\begin{split}
&C^{-1}\leq \rho, \theta \leq C,\quad  \|(u,\mathbf{w},\mathbf{b})\|_{L^\infty(Q_T)}\leq C,\\[2mm]
&\|(\rho_t,\rho_x, u_x,\mathbf{b}_x,\theta_x)\|_{L^\infty(0, T;L^2(\Omega))}
+\|(u_t, \mathbf{b}_t,\theta_t, u_{xx}, \theta_{xx})\|_{L^2(Q_T)}\leq C,\\[2mm]
&\|\mathbf{w}_x\|_{L^\infty(0,
T;L^1(\Omega))}+\|\mathbf{w}_t\|_{L^2(Q_T)}\leq C,\\
&\mu^{1/4}\|\mathbf{w}_x\|_{L^{\infty}(0,T;L^2(\Omega))} +\mu^{3/4}\|\mathbf{w}_{xx}\|_{L^2(Q_T)} \leq C,\\
&\|\sqrt{\omega}\mathbf{w}_x\|_{L^\infty(0,
T;L^2(\Omega))}+\|\sqrt{\omega}\mathbf{b}_{xx}\|_{L^2(Q_T)} \leq C,\\
\end{split}
\end{equation}
where $\omega: [0, 1]\rightarrow [0, 1]$ is defined by
\begin{equation*}
\begin{aligned}
\omega(x)=\left\{\begin{aligned} &x,&0\leq x\leq  1/2,\\ &1-x,&
1/2\leq x\leq 1.
\end{aligned}\right.
\end{aligned}
\end{equation*}

(ii)~~There exist  functions $(\overline\rho, \overline u,
\overline{\mathbf{w}}, \overline{\mathbf{b}},\overline\theta)$ in the family $\mathbb{F}$ defined by
\begin{equation*}\label{base0}
\mathbb{F}:\left\{\begin{split}
&  \overline\rho, \overline\theta>0,\quad(\overline u, \overline{\mathbf{b}})|_{x=0, 1}=0,\\
&(\overline\rho,  1/\overline\rho,\overline u, \overline{\mathbf{w}}, \overline{\mathbf{b}},
\overline\theta,1/\overline\theta)\in L^\infty(Q_T),\quad
  \overline{\mathbf{w}} \in L^\infty(0, T;W^{1,1}(\Omega)),\\
    &(\overline\rho_t,\overline\rho_x,\overline u_x,\overline{\mathbf{b}}_x,\overline\theta_x)
    \in L^\infty(0, T;L^2(\Omega)),\quad (\overline u_x,\overline{\mathbf{b}}_x,\overline \theta_x) \in L^2(0, T;L^\infty(\Omega)),\\
   &\big(\overline u_t,  \overline{\mathbf{w}}_t, \overline{\mathbf{b}}_t,\overline\theta_t,
   \overline u_{xx}, \overline\theta_{xx}\big) \in L^2(Q_T),\\
       &\sqrt{\omega}\overline{\mathbf{w}}_x\in L^\infty(0, T;L^2(\Omega)),\quad \sqrt{\omega}\overline{\mathbf{b}}_{xx}  \in  L^2(Q_T),\\
 \end{split}\right.
\end{equation*}
such that as $\mu\rightarrow 0$
\begin{equation*}\label{rate}
 \begin{split}
 &(\rho,u,\mathbf{b}, \theta)\rightarrow (\overline\rho,\overline u,\overline{\mathbf{b}},
 \overline\theta)~~ \hbox{\rm  in}~~C^\alpha(\overline Q_T),~\forall \alpha\in(0, 1/4),\\
 &(u_x, \mathbf{b}_x, \theta_x)\rightarrow (\overline u_x, \overline{\mathbf{b}}_x,\overline\theta_x)~~\hbox{\rm strongly in}
 ~~L^{2}(Q_T),\\
    &  (\rho_t,\rho_x) \rightharpoonup (\overline\rho_t, \overline\rho_x)
    ~~\hbox{\rm weakly}-*~\hbox{\rm in}~ L^\infty(0, T; L^2(\Omega)),\\
  &(u_t,\mathbf{b}_t,\theta_t,u_{xx}, \theta_{xx})
  \rightharpoonup(\overline u_t,\overline{\mathbf{b}}_t, \overline\theta_t,\overline u_{xx},
  \overline\theta_{xx})~~ \hbox{\rm weakly in}~~L^2(Q_T),\\
  & \mathbf{b}_{xx}  \rightharpoonup  \overline{\mathbf{b}}_{xx}
  \quad \hbox{\rm  weakly in}~~L^2\big((\delta,1-\delta)\times(0, T)\big),~\forall \delta\in\big(0, 1/2\big),\\
      \end{split}
  \end{equation*}
 and
  \begin{equation*}
   \begin{split}
 & \mathbf{w}  \rightarrow   \overline{\mathbf{w}}~~~~\hbox{\rm  in}~~C^\alpha([\delta,1-\delta]\times[0, T]),~\forall \delta\in\big(0, 1/2\big),~\alpha\in(0, 1/4),\\
 &\mathbf{w}_t   \rightharpoonup   \overline{\mathbf{w}}_t~~\hbox{\rm weakly in}~~L^2(Q_T),\\
  &\mathbf{w}_x \rightharpoonup  \overline{\mathbf{w}}_x~\hbox{\rm weakly}-*~\hbox{\rm in}~ L^\infty(0, T; L^2(\delta,1-\delta)),~\forall \delta\in\big(0, 1/2\big),\\
  &\mathbf{w}\rightarrow  \overline{\mathbf{w}}~~ \hbox{\rm strongly in}~~L^r(Q_T),\quad\forall r \in [1, +\infty),\\
  &\sqrt{\mu}\|\mathbf{w}_x\|_{L^2(Q_T)} \rightarrow 0.
   \end{split}
    \end{equation*}
 Moreover, $(\overline{\rho},\overline{u},\overline{\mathbf{w}},\overline{\mathbf{b}},\overline{\theta})$ is the unique solution of problem  \eqref{e1}--\eqref{e4} with $\mu=0$ in $\mathbb{F}$.

(iii)~~Let
$(\overline{\rho},\overline{u},\overline{\mathbf{w}},\overline{\mathbf{b}},\overline{\theta})
\in \mathbb{F}$ be a solution for problem \eqref{e1}--\eqref{e4} with $\mu=0$. Then
\begin{equation*}\label{0u3}
\begin{aligned}
&\|(\rho-\overline\rho, u-\overline u, \mathbf{w}-\overline{\mathbf{w}},\mathbf{b}-\overline{\mathbf{b}},
\theta-\overline\theta )\|_{L^\infty(0, T;L^2(\Omega))}\\
&\quad\quad\quad+\|(u_x-\overline u_x,  \mathbf{b}_x-\overline{\mathbf{b}}_x, \theta_x-\overline\theta_x)\|_{L^2(Q_T)}=
 O(\mu^{1/4}).
\end{aligned}
\end{equation*}

\end{theorem}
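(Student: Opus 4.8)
I would prove the three parts in turn, with part~(i) carrying essentially all the analytic work. For~(i) I would first produce a local-in-time strong solution by a linearization/fixed-point scheme (the hypothesis $u_0\in W^{2,m}(\Omega)$ serving to run an $L^m$ parabolic iteration for the momentum equation, which keeps $u$ continuous), then extend it to an arbitrary $[0,T]$ by a priori estimates that are \emph{uniform in $\mu$}, derived in the order: (a) the basic energy identity --- multiply the momentum, transverse-momentum, induction and internal-energy equations by $u$, $\mathbf w$, $\mathbf b$, $1$ and add, absorbing the boundary contributions of $\mathbf w^{\pm}$ via the $C^{1}[0,T]$ bound in \eqref{assumption1} --- which controls $\sqrt\rho\,u,\sqrt\rho\,\mathbf w,\mathbf b,\sqrt{\rho e}$ in $L^\infty_tL^2_x$ and $u_x,\mathbf b_x,\sqrt\mu\,\mathbf w_x$ in $L^2(Q_T)$; (b) the pointwise bounds $C^{-1}\le\rho\le C$, via the effective-viscous-flux representation of $\lambda(\ln\rho)_x$ in Lagrangian mass coordinates following Kazhikhov--Shelukhin; (c) the pointwise bounds $C^{-1}\le\theta\le C$ --- the one place where only the one-sided hypothesis \eqref{kappa} is available, handled by combining the energy identity with the structure of the internal-energy equation and a Moser-type iteration in which $\kappa\ge\kappa_1\theta^{q}$ absorbs the heat flux; and (d) the first-order bounds $\|(\rho_x,u_x,\mathbf b_x,\theta_x)\|_{L^\infty_tL^2_x}+\|(u_t,\mathbf b_t,\theta_t,u_{xx},\theta_{xx})\|_{L^2(Q_T)}\le C$, obtained by differentiating the equations, testing against $\rho_x,u_t,\mathbf b_t,\theta_t$, and treating the momentum and energy equations as elliptic equations for $u_{xx},\theta_{xx}$ with $L^2$ right-hand sides.

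The heart of~(i) is the family of estimates for $\mathbf w$ that localize or degenerate as $\mu\to0$. Writing $\rho(\mathbf w_t+u\mathbf w_x)=\mathbf b_x+\mu\mathbf w_{xx}$ and testing against $\mathbf w$, then $\mathbf w_t$, then the boundary-vanishing weight $\omega$ times $\mathbf w_{xx}$ (the weight chosen precisely to kill the boundary terms that are not uniform in $\mu$), and finally running an $L^1$/BV estimate on $\mathbf w_x$ (again absorbing the inhomogeneous data via the $C^1$ bound on $\mathbf w^{\pm}$), one obtains $\|\mathbf w_t\|_{L^2(Q_T)}\le C$, the degenerate bounds $\mu^{1/4}\|\mathbf w_x\|_{L^\infty_tL^2_x}+\mu^{3/4}\|\mathbf w_{xx}\|_{L^2(Q_T)}\le C$, the weighted bounds $\|\sqrt\omega\,\mathbf w_x\|_{L^\infty_tL^2_x}+\|\sqrt\omega\,\mathbf b_{xx}\|_{L^2(Q_T)}\le C$, and $\|\mathbf w_x\|_{L^\infty_tL^1_x}\le C$. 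The $L^p$ estimate on $\mathbf w_{xx}$ (and $u_{xx}$) announced in the introduction is obtained by reading $\mu\mathbf w_{xx}=\rho\mathbf w_t+\rho u\mathbf w_x-\mathbf b_x$ (respectively, the $u$-equation) as a linear parabolic/elliptic equation and applying $L^p$ theory; this underlies $\overline u_x,\overline{\mathbf b}_x\in L^2(0,T;L^\infty(\Omega))$ in the limit class $\mathbb F$.

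For~(ii), the $\mu$-uniform bounds give compactness: by Aubin--Lions and the one-dimensional embedding $W^{1,2}(\Omega)\hookrightarrow C^{1/2}(\overline\Omega)$, together with $C^{1/2}$-regularity in $t$ (from the $L^2$ time-derivative bounds) valued in $L^2_x$, one gets $(\rho,u,\mathbf b,\theta)$ relatively compact in $C^\alpha(\overline Q_T)$ for every $\alpha<1/4$, strong $L^2(Q_T)$ convergence of $u_x,\mathbf b_x,\theta_x$, strong $L^r(Q_T)$ convergence of $\mathbf w$ for every finite $r$, and interior $C^\alpha([\delta,1-\delta]\times[0,T])$ convergence of $\mathbf w$ from the weighted estimate, with the remaining quantities converging weakly or weakly-$*$. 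Since $\sqrt\mu\,\|\mathbf w_x\|_{L^2(Q_T)}\le C\mu^{1/4}\to0$, the term $(\mu\mathbf w_x)_x$ drops out in the sense of distributions, so one passes to the limit in the weak formulation of \eqref{e1}--\eqref{e4} and checks that the limit lies in $\mathbb F$ and solves the $\mu=0$ problem. Uniqueness in $\mathbb F$ is a Gronwall estimate on the difference of two solutions; the only delicate point is that, on testing the $\mathbf w$- and $\mathbf b$-difference equations against the respective differences, the cross terms $\int(\mathbf b-\overline{\mathbf b})(\mathbf w-\overline{\mathbf w})_x$ and $\int(\mathbf w-\overline{\mathbf w})(\mathbf b-\overline{\mathbf b})_x$ combine into a total derivative whose boundary value vanishes since $\mathbf b-\overline{\mathbf b}=0$ at $x=0,1$, so no boundary data for $\overline{\mathbf w}$ is needed.

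Part~(iii) repeats this difference argument between the $\mu>0$ solution and a fixed $(\overline\rho,\overline u,\overline{\mathbf w},\overline{\mathbf b},\overline\theta)\in\mathbb F$, with the same cancellation, so that the \emph{only} new contributions relative to uniqueness are the two $\mu$-carrying terms: $\int_{Q_T}\mu|\mathbf w_x|^2(\theta-\overline\theta)\le\mu\|\mathbf w_x\|_{L^2(Q_T)}^2\|\theta-\overline\theta\|_{L^\infty}=O(\mu^{1/2})$ from the energy balance, and $\int_{Q_T}\mu\mathbf w_{xx}(\mathbf w-\overline{\mathbf w})$ from the $\mathbf w$-balance; the latter integrates by parts into the favorable $-\int\mu|\mathbf w_x-\overline{\mathbf w}_x|^2$, a remainder $-\int\mu\overline{\mathbf w}_x(\mathbf w_x-\overline{\mathbf w}_x)$ handled by splitting $\Omega$ into a boundary strip of width $\sim\sqrt\mu$ and its complement (using $\|\sqrt\omega\,\overline{\mathbf w}_x\|_{L^2}\le C$ off the boundary), and a boundary term controlled by $\mu\|\mathbf w_x\|_{L^\infty(\Omega)}\le C\mu\|\mathbf w_x\|_{L^2}^{1/2}\|\mathbf w_{xx}\|_{L^2}^{1/2}\le C\mu^{1/2}$ times the $C^\alpha$ bound on $\mathbf w-\overline{\mathbf w}$; these are $O(\mu^{1/2})$ in the energy, hence $O(\mu^{1/4})$ in the stated norm. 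I expect the transverse field to be the main obstacle: in~(i) the $\mu$-uniform weighted/$L^1$ estimates on $\mathbf w_x$ and the $L^p$ bound on $\mathbf w_{xx}$, which force the careful weight $\omega$ and the parabolic-regularity device, plus the two-sided bound for $\theta$ under only \eqref{kappa}; in~(iii) extracting the exponent $1/4$ near the boundary, where $\overline{\mathbf w}_x$ need not be square-integrable.
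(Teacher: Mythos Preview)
Your overall architecture is close to the paper's, but the ordering in part~(i) has a real gap. You propose to obtain the two-sided bound $C^{-1}\le\theta\le C$ at stage~(c), \emph{before} any higher-order estimates, via a Moser-type iteration. This will not close uniformly in $\mu$: in the internal-energy equation the source term $\mu|\mathbf w_x|^2$ is, at that point, controlled only in $L^1(Q_T)$ from the basic energy identity, which is too weak for any iteration to yield an $L^\infty$ bound. The paper's route is essentially the reverse. It obtains only the \emph{lower} bound on $\theta$ early (by a comparison argument), postpones the upper bound to the very end, and in between inserts the step you relegate to a supporting role --- an $L^{m_0}$ bound ($m_0=\min\{m,4/3\}>1$) on $u_{xx}$ from parabolic $L^p$ theory applied to the $u$-equation, using only the $C^{1/2,1/4}$ regularity of $\rho$ and the fact that the right-hand side $-uu_x-\gamma\theta_x-\gamma\rho^{-1}\rho_x\theta-\rho^{-1}\mathbf b\cdot\mathbf b_x$ already lies in $L^{4/3}(Q_T)$. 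The resulting control of $\int_0^T\|u_x\|_{L^\infty}^{m_0}\,dt$ is what makes all subsequent Gronwall arguments (the $\omega^2$- and then $\omega$-weighted estimates on $\mathbf w_x,\mathbf b_{xx}$, and the closing loop coupling $\int u_x^2$ to $(\iint u_{xx}^2)^{1/2}$) work. Only \emph{after} the sharpened bound $\mu^{3/2}\iint|\mathbf w_{xx}|^2\le C$ does the paper prove $\theta\le C$, and not by Moser iteration but by an $L^1$ estimate on $\theta_x$: differentiate the $\theta$-equation, test against a smoothed $\operatorname{sign}(\theta_x)$, and bound $\iint\mu|\mathbf w_x\cdot\mathbf w_{xx}|$ using the refined $\mu$-weighted estimates just obtained.

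For part~(iii) your integration-by-parts treatment of $\iint_{Q_T}\mu\mathbf w_{xx}\cdot(\mathbf w-\overline{\mathbf w})$ is more delicate than needed and runs into the obstacle that $\overline{\mathbf w}_x\notin L^2(\Omega)$ up to the boundary (only $\sqrt\omega\,\overline{\mathbf w}_x\in L^2$), so the ``remainder'' $\iint\mu\,\overline{\mathbf w}_x\cdot\widetilde{\mathbf w}_x$ is not obviously finite before the boundary-strip splitting; you would also pick up a nonzero boundary term since $\widetilde{\mathbf w}|_{x=0,1}\neq0$. The paper avoids all of this: apply Cauchy directly to $\mu\mathbf w_{xx}\cdot\widetilde{\mathbf w}$ and use $\mu^2\iint|\mathbf w_{xx}|^2\le C\mu^{1/2}$. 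The genuinely delicate cross-term in the $\widetilde{\mathbf w}$-equation is rather $\rho\,\widetilde u\,\overline{\mathbf w}_x\cdot\widetilde{\mathbf w}$, handled via the pointwise inequality $|\widetilde u(x,t)|^2\le\omega(x)\int_\Omega\widetilde u_x^2\,dx$ together with $\sup_t\int_\Omega\omega\,|\overline{\mathbf w}_x|^2\,dx\le C$; there is no cancellation between the $\widetilde{\mathbf w}$- and $\widetilde{\mathbf b}$-equations of the kind you describe (the $1/\rho$ in the $\mathbf w$-equation prevents it), and none is needed.
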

\begin{remark}
Following the argument in \cite{KS}(cf. \cite{CW1}),   \eqref{ve}
implies   that problem \eqref{e1}--\eqref{e4} admits a unique classical solution if the initial data is sufficiently smooth.
\end{remark}

 %Compared to some related works, such as
 %\cite{FanJiang,FHL,HJ,YZ} and some related references,
 %to overcome the difficulty from a more general assumption on \eqref{kappa},
 %we neeed to carefully estimate the dissipation of the temperature. Firstly,  we must
%overcome the difficulty coming from the dissipative estimate on the
%temperature.   Secondly,  to
%obtain the stronger convergence  of $\mathbf{w}$ and $\mathbf{b}$, we must establish some new   estimates
%on the derivatives of $\mathbf{w}$  and $\mathbf{b}$. Thirdly, we
%must seek a new method to obtain a uniform upper bound of the
%temperature.
%To overcome the difficulties, some  techniques are developed
%here. One of two  ingredients in the proof is the boundary estimates
%of derivatives of the transverse velocity and the magnetic field,
%and the other is that we can deduce a  uniform upper bound of $\theta$ by
%a  direct method.

We now present a sketch of the proof of \eqref{ve}. Firstly, the
uniform upper and lower bounds of the density can be obtained as
in the previous literatures, cf. \cite{FanJiang,FHL,HJ,YZ}.
A key observation in the paper  is to obtain
  a uniform bound on
$\|u_{xx}\|_{L^{m_0}(Q_T)} (m_0>1)$ that
can be obtained by the $L^p$-theory of
linear parabolic equations (see Lemma \ref{2.5}).
In fact, by using this estimate and some delicate analysis, we then
deduce the  key estimates of the bounds on $\|\omega\mathbf{w}_x\|_{L^\infty(0,
T;L^2(\Omega))}$ and
 $\|(u_t,\mathbf{b}_t,\mathbf{w}_t,
u_{xx},\theta_{x},\omega\mathbf{b}_{xx})\|_{L^{2}(Q_T)}$(see Lemma \ref{2.10}).
In this step, the difficulty caused by the coupling between the transverse velocity and transverse magnetic field is overcome.
% in virtue of the boundary estimates on them.
 With these uniform estimates with respect  to $\mu$, %$mu$
 the uniform bounds on $\|\sqrt{\omega}\mathbf{w}_x\|_{L^\infty(0, T;L^2(\Omega))}$
 and $\big(\mu^{1/4}\|\mathbf{w}_x\|_{L^\infty(0,T;L^2(\Omega))}+\mu^{3/4}\|\mathbf{w}_{xx}\|_{L^2(Q_T)}\big)$
 (see Lemma \ref{2.13}) can then be obtained that are
 essential to the estimation on
  both  convergence rate and boundary layer thickness. In addition,  an upper bound on
$\theta$ follows (see Lemma \ref{2.14}) together with the uniform bound on $\|(\theta_t,
\theta_{xx})\|_{L^{2}(Q_T)}$ (see Lemma \ref{2.15}).
%Consequently, the passage to limit is
%justified in the more strong sense.

The next result of this paper is about
the estimation on  the thickness of  boundary layer. For this, we
first recall
%, and  will prove that its value is  close to  $O(\sqrt{\mu})$ as $\mu\rightarrow 0$. Before this, we introduce
the definition of a BL-thickness, cf.
\cite{FS1}, as follows
\begin{definition}\label{defination}
A function $\delta(\mu)$ is called a BL-thickness for problem
\eqref{e1}-\eqref{e4} with vanishing  $\mu$ if
$\delta(\mu)\downarrow 0$ as $ \mu \downarrow 0$, and
\begin{equation*}\label{12}
\begin{aligned}
&\lim\limits_{\mu\rightarrow 0}\|(\rho-\overline\rho, u-\overline u,
\mathbf{w}-\overline{\mathbf{w}}, \mathbf{b}-\overline{\mathbf{b}},
\theta-\overline\theta)\|_{L^\infty(0,T;L^\infty(\delta(\mu),1-\delta(\mu))}=0,\\
&\mathop{ \inf\lim}\limits_{\mu\rightarrow 0}\|(\rho-\overline\rho,
u-\overline u, \mathbf{w}-\overline{\mathbf{w}},
\mathbf{b}-\overline{\mathbf{b}},\theta-\overline\theta)\|_{L^\infty(0,T;L^\infty(\Omega))}>0,
\end{aligned}
\end{equation*}
where   $(\rho, u, \mathbf{w}, \mathbf{b}, \theta)$ and
$(\overline\rho,\overline u, \overline{\mathbf{w}},
\overline{\mathbf{b}}, \overline\theta)$ are the solutions to
problem \eqref{e1}-\eqref{e4}
with $\mu>0$ and $\mu=0$, respectively.
\end{definition}

The second result of this paper is

\begin{theorem}\label{am} Let  the assumptions in Theorem \ref{existencethm}  hold.  Then  any function $\delta(\mu)$ satisfying  $\delta(\mu)\downarrow 0$ and $\frac{\sqrt{\mu}}{\delta(\mu)}\rightarrow 0$ as $ \mu \downarrow 0$
is  a BL-thickness for problem \eqref{e1}-\eqref{e4} such that
\begin{equation*}
\begin{split}
   &\lim\limits_{\mu\rightarrow 0} \|(\rho-\overline\rho,u-\overline u,\mathbf{b}-\overline{\mathbf{b}},\theta-\overline\theta)\|_{C^\alpha(\overline Q_T)}=0,\quad \forall \alpha\in (0,1/4),\\
   &\lim\limits_{\mu\rightarrow 0} \|\mathbf{w}-\overline{\mathbf{w}}\|_{L^\infty(0, T;L^\infty(\delta(\mu),
   1-\delta(\mu)))}=0,\quad \mathop{\inf\lim}\limits_{\mu\rightarrow 0} \|\mathbf{w}-\overline{\mathbf{w}}\|_{L^\infty(0, T;L^\infty(\Omega))}>0,
  \end{split}
\end{equation*}
when  $(\mathbf{w}^-(t), \mathbf{w}^+(t)) \not\equiv(\overline{\mathbf{w}}(0,t), \overline{\mathbf{w}}(1,t))$.
\end{theorem}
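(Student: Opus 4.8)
The plan is to deduce Theorem \ref{am} from the convergence estimates already collected in Theorem \ref{existencethm}, treating the BL-thickness statement as a relatively soft consequence of the quantitative rates plus an interpolation argument near the boundary. First I would isolate the two tasks implicit in Definition \ref{defination}: (a) convergence to zero of the differences on any shrinking interval $(\delta(\mu),1-\delta(\mu))$ with $\sqrt{\mu}/\delta(\mu)\to 0$, and (b) the non-degeneracy $\mathop{\inf\lim}_{\mu\to0}\|\,\cdot\,\|_{L^\infty(0,T;L^\infty(\Omega))}>0$, which is precisely where the boundary layer actually shows up.

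For task (a), note that $(\rho,u,\mathbf{b},\theta)\to(\overline\rho,\overline u,\overline{\mathbf b},\overline\theta)$ already holds in $C^\alpha(\overline Q_T)$ by part (ii), so for these components the convergence is uniform on all of $\overline\Omega$, giving the first displayed limit in the theorem with no restriction to an interior interval. The only genuinely ``layered'' quantity is $\mathbf{w}$. Here I would exploit the uniform bound $\mu^{1/4}\|\mathbf{w}_x\|_{L^\infty(0,T;L^2(\Omega))}\le C$ from \eqref{ve}, and the analogous bound $\sqrt{\omega}\,\overline{\mathbf w}_x\in L^\infty(0,T;L^2(\Omega))$ for the limit in $\mathbb F$, to control $(\mathbf{w}-\overline{\mathbf w})_x$ in $L^\infty(0,T;L^2(\delta(\mu),1-\delta(\mu)))$: on that interval the weight $\omega$ is bounded below by $\delta(\mu)$, so $\|\overline{\mathbf w}_x\|_{L^2(\delta,1-\delta)}\le C/\sqrt{\delta(\mu)}$, while $\|\mathbf{w}_x\|_{L^2(\delta,1-\delta)}\le\|\mathbf{w}_x\|_{L^2(\Omega)}\le C\mu^{-1/4}$. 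Combining this gradient bound with the strong $L^2(Q_T)$ (indeed $L^r(Q_T)$) convergence $\mathbf{w}\to\overline{\mathbf w}$ from part (ii), a Gagliardo--Nirenberg / Sobolev interpolation on the fixed-length interval yields, for each $t$,
\begin{equation*}
\|\mathbf{w}-\overline{\mathbf w}\|_{L^\infty(\delta,1-\delta)}\le C\|\mathbf{w}-\overline{\mathbf w}\|_{L^2(\delta,1-\delta)}^{1/2}\big(\|( \mathbf{w}-\overline{\mathbf w})_x\|_{L^2(\delta,1-\delta)}+\|\mathbf{w}-\overline{\mathbf w}\|_{L^2(\delta,1-\delta)}\big)^{1/2},
\end{equation*}
and after integrating in $t$ (using that the second factor is $O(\mu^{-1/4}+\delta(\mu)^{-1/2})$ uniformly in $t$ and the first factor is $o(1)$ in $L^2_t$) the product tends to zero provided $\mu^{-1/4}$ is beaten by the decay of $\|\mathbf{w}-\overline{\mathbf w}\|_{L^2}$; to make this clean I would instead use the sharper rate from part (iii), $\|\mathbf{w}-\overline{\mathbf w}\|_{L^\infty(0,T;L^2(\Omega))}=O(\mu^{1/4})$, so that the interpolation gives $O(\mu^{1/8})\cdot O(\mu^{-1/8}+\delta(\mu)^{-1/4})=O(1)\cdot o(1)$ exactly when $\sqrt{\mu}/\delta(\mu)\to0$. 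That is the step where the precise hypothesis $\sqrt{\mu}/\delta(\mu)\to0$ is consumed, and it is the main technical obstacle — balancing the blow-up rate of the interior gradient against the convergence rate of the $L^2$ norm.

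For task (b), the non-degeneracy, I would argue by contradiction using the boundary condition $\mathbf{w}(0,t)=\mathbf{w}^-(t)$, $\mathbf{w}(1,t)=\mathbf{w}^+(t)$ for $\mu>0$ versus the fact that the $\mu=0$ limit $\overline{\mathbf w}$ solves the hyperbolic transport equation $(\overline\rho\,\overline{\mathbf w})_t+(\overline\rho\,\overline u\,\overline{\mathbf w}-\overline{\mathbf b})_x=0$, for which the trace values $\overline{\mathbf w}(0,t),\overline{\mathbf w}(1,t)$ are determined by the characteristics and generically do not coincide with $\mathbf{w}^\pm(t)$. If $\|\mathbf{w}-\overline{\mathbf w}\|_{L^\infty(0,T;L^\infty(\Omega))}\to0$ along a subsequence, then since $\mathbf{w}(\cdot,t)$ is continuous up to $x=0,1$ (being in $W^{1,2}(\Omega)\hookrightarrow C(\overline\Omega)$), the traces would pass to the limit and force $\mathbf{w}^\pm(t)=\overline{\mathbf w}(0,t),\overline{\mathbf w}(1,t)$ a.e. $t$, contradicting $(\mathbf{w}^-,\mathbf{w}^+)\not\equiv(\overline{\mathbf w}(0,t),\overline{\mathbf w}(1,t))$. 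I would be slightly careful that ``$\mathbf{w}\to\overline{\mathbf w}$ uniformly on $\overline Q_T$'' is what is being negated — since $\mathbf{w}-\overline{\mathbf w}$ is continuous on $\overline\Omega$ for a.e.\ $t$ and the $L^\infty_t L^\infty_x$ norm is being assumed to vanish, the trace at $x=0$ satisfies $|\mathbf{w}^-(t)-\overline{\mathbf w}(0,t)|\le\|\mathbf{w}-\overline{\mathbf w}\|_{L^\infty_x}\to0$ for a.e.\ $t$, which suffices. Finally, the $C^\alpha(\overline Q_T)$ convergence statement for $(\rho,u,\mathbf{b},\theta)$ in the theorem is just a restatement of part (ii), so nothing new is needed there. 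Altogether the proof is short: part (ii)–(iii) supply everything except the near-boundary interpolation for $\mathbf{w}$ and the elementary trace argument for non-degeneracy.
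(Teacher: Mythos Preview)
Your overall strategy---interpolate the $L^\infty$ norm of $\mathbf w-\overline{\mathbf w}$ on $(\delta,1-\delta)$ between the $L^2$ rate $O(\mu^{1/4})$ from part (iii) and a gradient bound, and handle the non-degeneracy by a trace argument---is exactly the route the paper takes. But the interpolation step as you wrote it does not close.

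The concrete problem is the gradient bound you chose for $\mathbf w_x$. You used $\mu^{1/4}\|\mathbf w_x\|_{L^\infty_tL^2_x}\le C$, i.e.\ $\|\mathbf w_x\|_{L^2(\Omega)}\le C\mu^{-1/4}$, and combined it with $\|\overline{\mathbf w}_x\|_{L^2(\delta,1-\delta)}\le C\delta^{-1/2}$. Under the hypothesis $\sqrt\mu/\delta\to 0$ one has $\delta^{-1/2}\ll \mu^{-1/4}$, so the dominant gradient contribution is $\mu^{-1/4}$, and your product
\[
O(\mu^{1/8})\cdot O\big(\mu^{-1/8}+\delta^{-1/4}\big)=O(1)+O\big((\sqrt\mu/\delta)^{1/4}\big)
\]
is merely $O(1)$, not $o(1)$; the equality ``$=O(1)\cdot o(1)$'' you wrote is an arithmetic slip. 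With this choice the argument does \emph{not} yield $\|\mathbf w-\overline{\mathbf w}\|_{L^\infty(\delta,1-\delta)}\to 0$.

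The fix is already in \eqref{ve}: use the weighted bound $\|\sqrt\omega\,\mathbf w_x\|_{L^\infty_tL^2_x}\le C$ for the viscous solution as well, not only for $\overline{\mathbf w}$. Since $\omega\ge\delta$ on $(\delta,1-\delta)$, this gives $\|\mathbf w_x\|_{L^2(\delta,1-\delta)}\le C\delta^{-1/2}$ uniformly in $\mu$. The paper then obtains, for each $t$,
\[
\|\mathbf w-\overline{\mathbf w}\|_{L^\infty(\delta,1-\delta)}^2
\le C\|\mathbf w-\overline{\mathbf w}\|_{L^2(\Omega)}^2
+C\|\mathbf w-\overline{\mathbf w}\|_{L^2(\Omega)}\,\|(\mathbf w-\overline{\mathbf w})_x\|_{L^2(\delta,1-\delta)}
\le C\sqrt\mu+C\Big(\frac{\sqrt\mu}{\delta}\Big)^{1/2},
\]
which tends to $0$ precisely when $\sqrt\mu/\delta(\mu)\to 0$. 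Your trace argument for the $\liminf>0$ part is fine (and in fact more explicit than what the paper writes).
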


The rest of this paper is organized as follows. In Section
2, we will prove Theorem \ref{existencethm}.   The proof  of Theorem  1.3 will be given in Section 3.

\section{Proof of Theorem 1.1}

The existence and uniqueness of local solution can be obtained by
using the Banach theorem and the contractivity of the operator
through the linearization of the system, cf.\cite{Wang,Nash}. Then
to obtain global solution,  we only need to close the
 a priori estimates of solutions. The next subsection  is about
  deriving the $\mu$-uniform estimates given in \eqref{ve}.
From now on,    we use $C$ to denote a positive generic constant independent of $\mu$.

\subsection{A priori estimates  independent of $\mu$}

Firstly, rewrite  \eqref{e1} as
\begin{equation}\label{e20}
\begin{split}
&\mathcal{E}_t+\Big[u\big(\mathcal{E}+p+\frac12|\mathbf{b}|^2\big)-\mathbf{w}\cdot\mathbf{b}\Big]_x=\big(\lambda uu_x+\mu\mathbf{w}\cdot\mathbf{w}_x+\nu\mathbf{b}\cdot\mathbf{b}_x+\kappa\theta_x\big)_x,\\
&(\rho \mathcal{S})_t+(\rho u \mathcal{S})_x-\left(\frac{\kappa\theta_x}{\theta}\right)_x=\frac{\lambda u_x^2+\mu|\mathbf{w}_x|^2+\nu|\mathbf{b}_x|^2}{\theta}
+\frac{\kappa\theta_x^2}{\theta^2},
\end{split}
\end{equation}
where $\mathcal{E}$ and $\mathcal{S}$ are the total energy and the
entropy, respectively, given by
\begin{equation*}
\begin{split}
&\mathcal{E}=\rho\Big[\theta+\frac12(u^2+|\mathbf{w}|^2)\Big]+\frac12|\mathbf{b}|^2,\quad\mathcal{S}
=\ln\theta-\gamma\ln\rho.
\end{split}
\end{equation*}

\begin{lemma}\label{2.1}
\label{energy}Under the assumptions in Theorem \ref{existencethm}, we have
\begin{equation}\label{ba1}
\begin{split}
&\int_\Omega \rho(x,t)dx=\int_\Omega \rho_0(x)dx,\quad\forall t \in (0, T),\\
&\sup\limits_{0<t<T}\int_\Omega\big[\rho(\theta+ u^2+|\mathbf{w}|^2)+ |\mathbf{b}|^2\big] dx\leq
C,\\[1mm]
& \iint_{Q_T}
\left(\frac{\lambda u_x^2+\mu|\mathbf{w}_x|^2+\nu|\mathbf{b}_x|^2}{\theta}+\frac{\kappa\theta_x^2}{\theta^2}\right)dxdt
\leq C.
\end{split}
\end{equation}
\end{lemma}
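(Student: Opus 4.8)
The plan is to derive the three estimates in order, each by integrating one of the reformulated equations over $\Omega$ (or $Q_T$) and exploiting the boundary conditions in \eqref{e4}. First, the conservation of total mass follows immediately by integrating $\eqref{e1}_1$ over $\Omega$ and using $u|_{x=0,1}=0$, which kills the flux term $(\rho u)_x$; hence $\int_\Omega\rho(x,t)\,dx$ is constant in $t$ and equals $\int_\Omega\rho_0\,dx$. Next, for the total-energy bound I would integrate the first equation of \eqref{e20} over $\Omega$. The flux term $\big[u(\mathcal{E}+p+\tfrac12|\mathbf{b}|^2)-\mathbf{w}\cdot\mathbf{b}\big]_x$ integrates to a boundary contribution; on the right-hand side the term $(\lambda uu_x+\mu\mathbf{w}\cdot\mathbf{w}_x+\nu\mathbf{b}\cdot\mathbf{b}_x+\kappa\theta_x)_x$ also becomes a boundary term. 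Using $u|_{x=0,1}=0$, $\mathbf{b}|_{x=0,1}=\mathbf 0$ and $\theta_x|_{x=0,1}=0$, all boundary terms vanish \emph{except} those involving $\mathbf{w}$ at the endpoints, namely $-\mathbf{w}\cdot\mathbf{b}$ (which is zero since $\mathbf{b}$ vanishes there) and $\mu\,\mathbf{w}\cdot\mathbf{w}_x\big|_{x=0,1}$. The latter is the one genuinely nontrivial boundary flux: it is controlled using the prescribed data $\mathbf{w}^\pm(t)\in C^1[0,T]$ together with an interpolation/trace bound on $\mathbf{w}_x$ at the boundary in terms of interior norms already appearing on the left of $\eqref{ba1}_3$; alternatively one subtracts a smooth lift of the boundary data $\mathbf{w}^\pm$ and works with the homogenized unknown. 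After a Gronwall argument in $t$ this yields $\sup_{0<t<T}\int_\Omega[\rho(\theta+u^2+|\mathbf{w}|^2)+|\mathbf{b}|^2]\,dx\le C$, using the state equation \eqref{e2} to rewrite $\rho e=\rho\theta$ inside $\mathcal{E}$, and the initial-data assumptions \eqref{assumption1} to bound $\int_\Omega\mathcal{E}(x,0)\,dx$.

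For the third (entropy dissipation) estimate I would integrate the second equation of \eqref{e20} over $Q_T$. The convective term $(\rho u\mathcal{S})_x$ and the term $(\kappa\theta_x/\theta)_x$ both integrate to boundary contributions in $x$ that vanish by the boundary conditions on $u$ and $\theta_x$; the time term gives $\int_\Omega\rho\mathcal{S}\,dx\big|_0^T$. The key sign structure is that the right-hand side of $\eqref{e20}_2$ is the \emph{nonnegative} dissipation density $\theta^{-1}(\lambda u_x^2+\mu|\mathbf{w}_x|^2+\nu|\mathbf{b}_x|^2)+\kappa\theta_x^2/\theta^2$, so moving it to the left gives exactly the quantity to be bounded. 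It remains to bound $\big|\int_\Omega\rho\mathcal{S}\,dx\big|$ at $t=0$ and $t=T$ from above and below. Since $\mathcal{S}=\ln\theta-\gamma\ln\rho$, I control $\int_\Omega\rho\mathcal{S}\,dx$ using the mass bound, the already-proved $L^1$ bound on $\rho\theta$ (from $\eqref{ba1}_2$), and the elementary inequality $\rho\ln\theta\le \rho\theta$ together with $|\rho\ln\rho|\le C(\rho^{3/2}+1)$ — but here one needs an $L^1$ bound on $\rho\theta$ at the single time $t=T$, which is supplied by $\eqref{ba1}_2$; at $t=0$ the assumptions $\|\rho_0^{-1},\theta_0^{-1}\|_{C(\overline\Omega)}<\infty$ and $\rho_0,\theta_0\in W^{1,2}$ make $\int_\Omega\rho_0\mathcal{S}(x,0)\,dx$ finite directly.

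The main obstacle is the boundary term generated by the inhomogeneous condition $\mathbf{w}(0,t)=\mathbf{w}^-(t)$, $\mathbf{w}(1,t)=\mathbf{w}^+(t)$: unlike the Navier–Stokes case, the transverse-velocity boundary flux $\mu\,\mathbf{w}\cdot\mathbf{w}_x|_{x=0,1}$ does not vanish, and it must be absorbed without destroying the $\mu$-uniformity of the constant $C$. I expect to handle this by introducing a fixed smooth function $\tilde{\mathbf w}(x,t)$ with $\tilde{\mathbf w}(0,t)=\mathbf{w}^-(t)$, $\tilde{\mathbf w}(1,t)=\mathbf{w}^+(t)$ (linear in $x$, say), testing the $\mathbf{w}$-equation against $\mathbf{w}-\tilde{\mathbf w}$ rather than $\mathbf{w}$, so that the boundary terms cancel and the extra interior terms involving $\tilde{\mathbf w}_t,\tilde{\mathbf w}_x$ are controlled by $\|\mathbf{w}^\pm\|_{C^1[0,T]}$ and the energy already in hand; all constants then remain independent of $\mu$ because the coefficient $\mu$ multiplies only a term of the correct (good) sign or a term bounded by $\mu\le$ const times the dissipation. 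Everything else is routine integration by parts plus Gronwall.
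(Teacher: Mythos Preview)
Your overall architecture is right and matches the paper: integrate the total-energy identity $\eqref{e20}_1$, identify $\mu\,\mathbf{w}\cdot\mathbf{w}_x\big|_{x=0}^{x=1}$ as the only surviving boundary flux, close by Gronwall, and then read off $\eqref{ba1}_3$ from the entropy identity $\eqref{e20}_2$ using the energy bound just obtained. The genuine difference is in how the boundary flux is handled.

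The paper does \emph{not} lift the boundary data. Instead it integrates $\eqref{e1}_3$ from $x=a$ (with $a=0$ or $1$) to a generic $x$ and then averages over $x\in\Omega$, obtaining the explicit representation
\[
\mu\mathbf{w}_x(a,t)=\mu(\mathbf{w}^+-\mathbf{w}^-)-\int_\Omega(\rho u\mathbf{w}-\mathbf{b})\,dx-\partial_t\Big(\int_\Omega\!\!\int_a^x\rho\mathbf{w}\,dy\,dx\Big).
\]
Taking the inner product with the \emph{known} datum $\mathbf{w}(a,t)=\mathbf{w}^\pm(t)$, integrating in $t$, and integrating the last term by parts in $t$ (this is where $\mathbf{w}^\pm\in C^1[0,T]$ enters), one gets directly
$\big|\mu\int_0^t(\mathbf{w}\cdot\mathbf{w}_x)(a,s)\,ds\big|\le C+\tfrac12\int_\Omega\mathcal{E}\,dx+C\iint_{Q_t}\mathcal{E}\,dx\,ds$,
which feeds straight into Gronwall. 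Your lift $\tilde{\mathbf w}$ is a legitimate alternative and closes in the same way; just note that testing $\eqref{e1}_3$ against $\mathbf{w}-\tilde{\mathbf w}$ \emph{in isolation} would leave the term $\int_\Omega\mathbf{b}_x\cdot(\mathbf{w}-\tilde{\mathbf w})\,dx$, which cannot be absorbed by $\mu\int|\mathbf{w}_x|^2$ when $\mu$ is small---it only cancels once you pair it with $\int_\Omega\mathbf{w}_x\cdot\mathbf{b}\,dx$ from the $\mathbf{b}$-equation in the full energy assembly, leaving the harmless remainder $\int_\Omega\mathbf{b}\cdot\tilde{\mathbf w}_x\,dx$. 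The paper's device has the advantage of staying with the single scalar identity \eqref{total} and producing a pointwise-in-$t$ formula for $\mu\mathbf{w}_x(a,t)$ that is reused later (e.g.\ in Lemma~\ref{2.11}).

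One caveat: your first option---bounding $\mathbf{w}_x$ at the boundary by ``interior norms already appearing on the left of $\eqref{ba1}_3$''---is circular, since $\eqref{ba1}_3$ is proved \emph{after} $\eqref{ba1}_2$, and in any case $\eqref{ba1}_3$ gives only a space--time integral of $\mu|\mathbf{w}_x|^2/\theta$, not a trace. Discard that option and keep the lift.
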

\begin{proof}
Integrating $\eqref{e20}_1$ over $Q_t=\Omega\times(0, t)$  yields
\begin{equation}\label{total}
\begin{split}
&\int_\Omega \mathcal{E}dx=\int_\Omega \mathcal{E}|_{t=0}dx+\mu\int_0^t(\mathbf{w}\cdot\mathbf{w}_x)|_{x=0}^{x=1}ds.
\end{split}
\end{equation}
To estimate the final integral on the right hand side of \eqref{total}, we first integrate  \eqref{e1}$_3$ from $x=a$ to $x$, where $a=0$ or $1$, and then integrate the resulting equation over $\Omega$ to obtain
\begin{equation*}\label{w8}
\begin{split}
\mu\mathbf{w}_x(a,t)=\mu\big(\mathbf{w}^+- \mathbf{w}^-\big)-\int_\Omega(\rho u\mathbf{w}-\mathbf{b})dx-\frac{\partial}{\partial t}\Big(\int_\Omega\int_a^x\rho\mathbf{w}dydx\Big).
\end{split}
\end{equation*}
Taking the inner product  with  $\mathbf{w}(a,t)$ and integrating  over $(0, t)$
yield
\begin{equation*}
\begin{split}
\mu\int_0^t(\mathbf{w}\cdot\mathbf{w}_x)(a,s)ds=&\mu\int_0^t\big(\mathbf{w}^+- \mathbf{w}^-\big)\cdot\mathbf{w}(a,s)ds -\int_0^t\mathbf{w}(a,s)\cdot\Big(\int_\Omega(\rho u\mathbf{w}-\mathbf{b})dx\Big)ds\\
&-\mathbf{w}(a,t)\cdot\Big(\int_\Omega\int_a^x
\rho\mathbf{w}dydx\Big)+\mathbf{w}(a,0)\cdot\Big(\int_\Omega\int_a^x
\rho_0\mathbf{w}_0dydx\Big)\\
&+\int_0^t\mathbf{w}_t(a,t)\cdot\Big(\int_\Omega\int_a^x\rho\mathbf{w}dydx\Big)dt.
\end{split}
\end{equation*}
Using Young inequality and \eqref{ba1}$_1$, we obtain
\begin{equation*}\label{w10}
\begin{split}
\left|\mu\int_0^t(\mathbf{w}\cdot\mathbf{w}_x)(a,s)ds\right|\leq C+\frac12\int_\Omega\mathcal{E}dx+C\iint_{Q_t}\mathcal{E}dxds.
\end{split}
\end{equation*}
Substituting it into \eqref{total} and using  Gronwall inequality, we obtain   \eqref{ba1}$_2$.

\eqref{ba1}$_3$ follows from integrating \eqref{e20}$_2$ and using \eqref{ba1}$_2$. And this completes the proof of the lemma.
\end{proof}

The following   estimates can be obtained as in \cite{FHL,FanJiang,HJ}. For the completeness of the paper, we briefly present its proof.

\begin{lemma}\label{2.2}
Under the assumptions in Theorem \ref{existencethm}, we have
\begin{equation}\label{rho11}
\begin{split}
&  C^{-1}\leq \rho \leq C, \quad \theta\geq C,\\
 &\int_0^T\|\theta\|_{L^\infty(\Omega)}^{q+1-\alpha}dt+\iint_{Q_T} \frac{\kappa\theta_x^2}{\theta^{1+\alpha}} dxdt
\leq C,\quad\forall \alpha \in (0, \min\{1,q\}),\\
&\int_0^T\|\mathbf{b}\|_{L^\infty(\Omega)}^{2}dt+\iint_{Q_T} \left(\lambda
u_x^2+\mu|\mathbf{w}_x|^2+\nu|\mathbf{b}_x|^2 \right)dxdt
\leq C,\\
&\iint_{Q_T} |\theta_x|^{3/2} dxdt\leq C.
\end{split}
\end{equation}
\end{lemma}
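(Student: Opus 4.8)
The plan is to prove the four groups of estimates in \eqref{rho11} in the order listed, bearing in mind that at this stage there is as yet no $L^\infty$ bound on $\theta$ (that is obtained only later, in Lemma~\ref{2.14}), so every estimate must be weighted by suitable powers of $\theta$ and the whole lemma has to be closed by a bootstrap/absorption argument rather than a single Gronwall step. Throughout I use mass conservation $\int_\Omega\rho\,dx=\int_\Omega\rho_0\,dx$ and the energy and entropy-dissipation bounds \eqref{ba1}$_2$ and \eqref{ba1}$_3$ of Lemma~\ref{2.1}, together with the internal-energy equation, rewritten with the help of the continuity equation and $p=\gamma\rho\theta$ as
\[
\rho(\theta_t+u\theta_x)=(\kappa\theta_x)_x-\gamma\rho\theta u_x+\lambda u_x^2+\mu|\mathbf{w}_x|^2+\nu|\mathbf{b}_x|^2,
\]
supplemented by the Neumann condition $\theta_x|_{x=0,1}=0$.

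First I would obtain the pointwise bounds $C^{-1}\le\rho\le C$ by Kazhikhov's method, exactly as done for the planar MHD system in \cite{FHL,FanJiang,HJ}: from the momentum equation the effective viscous flux $F=\lambda u_x-p-\tfrac12|\mathbf{b}|^2$ satisfies $F_x=\rho(u_t+uu_x)$, and combining this with $\rho_t+(\rho u)_x=0$ produces a representation formula for $\rho$ along particle trajectories whose integrability is governed by $\int_\Omega p\,dx=\gamma\int_\Omega\rho\theta\,dx\le C$ from \eqref{ba1}$_2$; a Jensen/Gronwall estimate then gives the claim. The lower bound $\theta\ge C^{-1}$ is the point I expect to be the main obstacle: I would derive it by a maximum-principle argument for $h=1/\theta$, which, using $\rho u_x=-(\rho_t+u\rho_x)$ and $\kappa\ge\kappa_1\theta^q$, satisfies a parabolic inequality of the form $\rho(h_t+uh_x)-(\kappa h_x)_x\le\gamma\rho u_x h$ with homogeneous Neumann data; testing it against $h^{p-1}$ for large $p$ and closing with the $\rho$-bounds and \eqref{ba1}$_3$ yields the bound, the awkward $h^{p+q}$-terms coming from $1/\kappa\le\kappa_1^{-1}\theta^q$ being absorbed by Gagliardo--Nirenberg interpolation, as in the references.

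For the weighted heat-flux estimate and the integrability of $\|\theta\|_{L^\infty}$: multiplying the internal-energy equation by $-\alpha^{-1}\theta^{-\alpha}$ with $\alpha\in(0,\min\{1,q\})$, integrating over $\Omega$ using the Neumann condition, and then integrating over $(0,T)$ gives
\[
\alpha\iint_{Q_T}\frac{\kappa\theta_x^2}{\theta^{1+\alpha}}\,dxdt\le C+\gamma\iint_{Q_T}\rho\,\theta^{1-\alpha}|u_x|\,dxdt ,
\]
where the $t=T$ boundary term is absorbed into $C$ via $\int_\Omega\rho\theta^{1-\alpha}\,dx\le C\int_\Omega\rho\theta\,dx\le C$ and the nonnegative term $\int\theta^{-\alpha}(\lambda u_x^2+\mu|\mathbf{w}_x|^2+\nu|\mathbf{b}_x|^2)$ is discarded. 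Writing $\rho\theta^{1-\alpha}u_x=(\rho\theta^{3/2-\alpha})(\theta^{-1/2}u_x)$ and applying Cauchy--Schwarz with \eqref{ba1}$_3$ bounds the last integral by $C\big(\int_0^T\|\theta\|_{L^\infty}^{2-2\alpha}\,dt\big)^{1/2}$. On the other hand, from $\int_\Omega\theta\,dx\le C$ (by \eqref{ba1}$_2$ and $\rho\ge C^{-1}$), the one-dimensional embedding $\|f\|_{L^\infty(\Omega)}\le\int_\Omega(|f|+|f_x|)\,dx$ applied to $f=\theta^{q+1-\alpha}$, Cauchy--Schwarz, and $\kappa\ge\kappa_1\theta^q$ with $\alpha<q$ (which keeps $\int_\Omega\theta^{2q+1-\alpha}/\kappa\,dx$ under control), one obtains the pointwise-in-time inequality $\|\theta\|_{L^\infty}^{q+2-\alpha}\le C+C\int_\Omega\kappa\theta_x^2\theta^{-1-\alpha}\,dx$. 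Since $2-2\alpha<q+2-\alpha$, Young's inequality lets the quantities $\iint_{Q_T}\kappa\theta_x^2\theta^{-1-\alpha}\,dxdt$ and $\int_0^T\|\theta\|_{L^\infty}^{q+2-\alpha}\,dt$ be absorbed into one another, yielding both finite, and in particular the stated bounds $\int_0^T\|\theta\|_{L^\infty}^{q+1-\alpha}\,dt\le C$ and $\iint_{Q_T}\kappa\theta_x^2/\theta^{1+\alpha}\,dxdt\le C$; the estimate $\iint_{Q_T}|\theta_x|^{3/2}\,dxdt\le C$ then follows by writing $|\theta_x|^{3/2}=(\kappa\theta_x^2/\theta^{1+\alpha})^{3/4}(\theta^{1+\alpha}/\kappa)^{3/4}$, using Hölder in $x$ and in $t$ together with $\kappa\ge\kappa_1\theta^q$ and these two bounds.

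It remains to bound the dissipation in $L^2(Q_T)$ and $\|\mathbf{b}\|_{L^\infty}$. Testing the momentum equation with $u$ and using $p=\gamma\rho\theta$, $\int_\Omega\theta\le C$ and $\int_\Omega|\mathbf{b}|^2\le C$ gives $\frac{d}{dt}\int_\Omega\tfrac12\rho u^2+\tfrac{\lambda}{2}\int_\Omega u_x^2\le C(\|\theta\|_{L^\infty}+\|\mathbf{b}\|_{L^\infty}^2)$; testing the $\mathbf{b}$-equation with $\mathbf{b}$, using $\mathbf{b}|_{x=0,1}=0$, $\|\mathbf{b}\|_{L^\infty}^2\le C\|\mathbf{b}_x\|_{L^2}$ and $\int_\Omega|\mathbf{w}|^2\le C$, gives $\frac{d}{dt}\int_\Omega\tfrac12|\mathbf{b}|^2+\tfrac{\nu}{2}\int_\Omega|\mathbf{b}_x|^2\le C\int_\Omega u_x^2+C$; and testing the transverse-velocity equation with $\mathbf{w}-\widetilde{\mathbf{w}}$, where $\widetilde{\mathbf{w}}(x,t)=(1-x)\mathbf{w}^-(t)+x\mathbf{w}^+(t)$, controls $\iint_{Q_T}\mu|\mathbf{w}_x|^2$ in the same way, the boundary contributions being handled by $\|(\mathbf{w}^-,\mathbf{w}^+)\|_{C^1}\le C$. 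Since $\int_0^T\|\theta\|_{L^\infty}\,dt\le C$ (Hölder from the previous step) and $\int_0^T\|\mathbf{b}\|_{L^\infty}^2\,dt\le C(\iint_{Q_T}|\mathbf{b}_x|^2)^{1/2}$, writing $X=\iint_{Q_T}u_x^2$ and $Y=\iint_{Q_T}|\mathbf{b}_x|^2$ one gets $X\le C+C\sqrt{Y}$ and $Y\le C+CX$, hence $X,Y\le C$; the bounds $\int_0^T\|\mathbf{b}\|_{L^\infty}^2\,dt\le C$ and $\iint_{Q_T}\mu|\mathbf{w}_x|^2\le C$ follow, which is exactly \eqref{rho11}. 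The recurring difficulty in all of this is precisely the missing a priori $L^\infty$ bound on $\theta$: it is what forces the $\theta$-weighted formulations and makes the closure rely on the self-improving interplay between the weighted heat-flux bound, the one-dimensional embedding, and the entropy dissipation \eqref{ba1}$_3$.
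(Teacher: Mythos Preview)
Your overall strategy is close to the paper's, but the ordering of the density bounds creates a genuine circularity. You propose to obtain \emph{both} bounds $C^{-1}\le\rho\le C$ at the very beginning via Kazhikhov's representation. The upper bound does follow immediately (in the paper: $D_t(\rho F)\le 0$ with $F=e^{\phi/\lambda}$, using only Lemma~\ref{2.1}). But the lower bound comes from $D_t\big(\tfrac{1}{\rho F}\big)=\tfrac{1}{\lambda}\big(p+\tfrac12|\mathbf{b}|^2\big)\tfrac{1}{\rho F}$, and Gronwall along particle paths requires $\int_0^T\big(\|\theta\|_{L^\infty}+\|\mathbf{b}\|_{L^\infty}^2\big)\,dt\le C$, which is precisely the content of \eqref{rho11}$_2$--\eqref{rho11}$_3$. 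The paper therefore postpones $\rho\ge C^{-1}$ until \emph{after} the weighted heat-flux and dissipation estimates. Your argument uses the lower density bound prematurely: in the weighted step you invoke ``$\int_\Omega\theta\,dx\le C$ (by \eqref{ba1}$_2$ and $\rho\ge C^{-1}$)'', which is not yet available. The paper avoids this by using instead that, since $\int_\Omega\rho\theta\le C$ and $\int_\Omega\rho=\text{const}$, there is always a point where $\theta$ is bounded, and then controlling the oscillation via $\int|\theta^{-\alpha}\theta_x|$.

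Second, for $\theta\ge C^{-1}$ the paper does not run a Moser iteration on $h=1/\theta$; it uses a one-line comparison argument. From \eqref{e1}$_5$ and $\rho\le C$ one has $\theta_t+u\theta_x-\rho^{-1}(\kappa\theta_x)_x+K\theta^2\ge 0$ for a $\mu$-independent $K$, and the explicit subsolution $\underline\theta(t)=\min_{\overline\Omega}\theta_0/(K\min_{\overline\Omega}\theta_0\,t+1)$ (with $\theta_x|_{x=0,1}=0$) gives $\theta\ge\underline\theta$ by the comparison principle. This needs only the upper bound on $\rho$. Your proposed $h^{p-1}$-testing is both more involved and, as written, unclear: in the inequality for $h$ the diffusion enters as $(\kappa h_x)_x$ with $\kappa$ on the good side, so there are no ``$h^{p+q}$-terms coming from $1/\kappa\le\kappa_1^{-1}\theta^q$'' to absorb.

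Finally, your route to the dissipation bound \eqref{rho11}$_3$ (testing the $u$-, $\mathbf{b}$-, and $\mathbf{w}$-equations separately and closing a coupled system $X\le C+C\sqrt{Y}$, $Y\le C+CX$) can be made to work, but the paper's argument is shorter: integrate \eqref{e1}$_5$ directly over $Q_T$, so that $\lambda u_x^2+\mu|\mathbf{w}_x|^2+\nu|\mathbf{b}_x|^2$ appear together and the only term to absorb is $\iint p\,u_x$, controlled by $\tfrac{\lambda}{2}\iint u_x^2+C\int_0^T\|\theta\|_{L^\infty}\int_\Omega\rho\theta$ using the already-established $\int_0^T\|\theta\|_{L^\infty}\,dt\le C$.
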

\begin{proof}
We first prove that $\rho \leq C.$ Denote
$$
\phi=\int_0^t \tilde{P}(x,s)ds+\int_0^x\rho_0(y)u_0(y)dy,\quad \tilde{P}=\lambda u_x-\rho u^2-\gamma \rho\theta-\frac12|\mathbf{b}|^2.
$$
Then
$$
(\rho u)_t=\tilde{P}_x,~~\phi_t=\tilde{P},~~\phi_x|_{x=0,1}=0,~~\phi|_{t=0}=\int_0^x\rho_0(y)u_0(y)dy.
$$
By Lemma \ref{2.1}, we have that $\|\phi_x\|_{L^\infty(0,T;L^1(\Omega))}+|\int_\Omega\phi dx|\leq C$, thus, $\|\phi\|_{L^\infty(0,T;L^\infty(\Omega))} \leq C$. From this and the fact  that the function $F:=e^{\phi/\lambda}$ satisfies
$$
D_t(\rho F):=\partial_t (\rho F)+u\partial_x (\rho F)=-\frac{1}{\lambda}\left(p+\frac12|\mathbf{b}|^2\right)\rho F\leq0,
$$
it follows that $\rho \leq C.$

  It follows from  \eqref{e1}$_5$
 that
\begin{equation*}
 \begin{split}
\theta_t+u\theta_x-\frac{1}{\rho} (\kappa \theta_x)_x
\geq & \frac{\lambda}{\rho} \left(u_x^2 -\frac{p}{\lambda} u_x \right)
=   \frac{\lambda}{\rho} \left(u_x
-\frac{p}{2\lambda}\right)^2-\frac{\gamma^2}{4\lambda}\rho\theta^2.
\end{split}
\end{equation*}
By $ \rho \leq C$, we have that $
\theta_t+u\theta_x-\frac{1}{\rho} (\kappa
\theta_x)_x   +K \theta^2\geq 0,$
where $K$ is a positive constant independent of $\mu$. Let $z=\theta-\underline\theta$, where
$\underline\theta=\frac{\min_{\overline\Omega}\theta_0}{Ct+1}$
 with $C=K\min_{\overline\Omega}\theta_0$. Then
 $ z_x|_{x=0, 1}=0, ~z|_{t=0}\geq0, $ and
 \begin{equation*}\begin{split}
&z_t+u z_x-\frac{1}{\rho} (\kappa z_x)_x  +K(\theta+\underline\theta)z\\
&=\theta_t+C\frac{\min_{\overline\Omega}\theta_0}{(Ct+1)^2}+u\theta_x-\frac{1}{\rho} (\kappa
\theta_x)_x  +K \theta^2- K\left(\frac{\min_{\overline\Omega}\theta_0}{Ct+1}\right)^2 \geq   0,
\end{split}\end{equation*}
thus,  $z\geq 0$ on
$\overline Q_T$ by the Comparison Theorem that gives $\theta\geq C$.

Multiplying \eqref{e1}$_{5}$ by $\theta^{-\alpha}$ with $\alpha \in (0, \min\{1,q\})$ and integrating over $Q_T$, we have
\begin{equation}\label{equ8}
\begin{split}
 \iint_{Q_T}
 \frac{\lambda u_x^2}{\theta^\alpha} dxdt+\alpha\iint_{Q_T} \frac{\kappa\theta_x^2}{\theta^{1+\alpha}} dxdt\leq \iint_{Q_T}[(\rho\theta)_t+(\rho u\theta)_x+p u_x]\theta^{-\alpha} dxdt.
\end{split}
\end{equation}
From Lemma \ref{2.1}, $\theta\geq C$ and H\"{o}lder  inequality, we obtain
\begin{equation*}
\begin{split}
  \iint_{Q_T}[(\rho\theta)_t+(\rho u\theta)_x]\theta^{-\alpha} dxdt=&\frac{1}{1-\alpha}\int_\Omega \rho\theta^{1-\alpha}dx-\frac{1}{1-\alpha}\int_\Omega \rho_0\theta_0^{1-\alpha}dx
  \leq  C,
\end{split}
\end{equation*}
and
\begin{equation*}
\begin{split}
  \iint_{Q_T} p u_x \theta^{-\alpha} dxdt\leq & \frac12\iint_{Q_T} \frac{u_x^2}{\theta^\alpha}dxdt+C\iint_{Q_T} \rho^2\theta^{2-\alpha}dxdt\\
  \leq & \frac12\iint_{Q_T} \frac{u_x^2}{\theta^\alpha}dxdt+C \int_0^T\|\theta\|^{1-\alpha}_{L^\infty(\Omega)}ds.
\end{split}
\end{equation*}
By the embedding theorem, Young inequality and $\theta\geq C$, we have that if $q\geq 1-\alpha$, then
\begin{equation*}
\begin{split}
  \int_0^T\|\theta\|^{1-\alpha}_{L^\infty(\Omega)}ds \leq & C+C\iint_{Q_T}|\theta^{-\alpha}\theta_x|dxds\\
  \leq & C +C\int_0^T\left(\int_\Omega \frac{|\theta_x|^2\theta^{1-\alpha}}{\theta^{1+\alpha}}dx\right)^{1/2}dt\\
  \leq & \frac{C}{\epsilon} +\epsilon\iint_{Q_T} \frac{\kappa\theta_x^2}{\theta^{1+\alpha}}dxdt,~~\forall \epsilon \in (0,1).
\end{split}
\end{equation*}
If  $0<q<1-\alpha$, then
\begin{equation*}
\begin{split}
  \int_0^T\|\theta\|^{1-\alpha}_{L^\infty(\Omega)}dt \leq & C+C\iint_{Q_T}|\theta^{-\alpha}\theta_x|dxdt\\
  \leq & C +C\int_{0}^T\left(\int_\Omega \frac{\theta^q|\theta_x|^2}{\theta^{1+\alpha}}\theta^{1-\alpha-q}dx\right)^{1/2}dt\\
  \leq & C +\epsilon\iint_{Q_T} \frac{\kappa\theta_x^2}{\theta^{1+\alpha}}dxdt+\frac{C}{\epsilon}\int_0^T\|\theta\|^{1-\alpha-q}_{L^\infty(\Omega)}dt\\
  \leq & C(\epsilon) +\epsilon\iint_{Q_T} \frac{\kappa\theta_x^2}{\theta^{1+\alpha}}dxdt+\frac12\int_0^T\|\theta\|^{1-\alpha}_{L^\infty(\Omega)}dt.
\end{split}
\end{equation*}
Substituting them into \eqref{equ8} and taking a small $\epsilon$, we obtain that $\iint_{Q_T} \frac{\kappa\theta_x^2}{\theta^{1+\alpha}} dxdt\leq C.$ As a consequence, $\int_0^T\|\theta\|_{L^\infty(\Omega)}^{q+1-\alpha}dt\leq C$.

Integrating \eqref{e1}$_{5}$ over $Q_T$ and using Lemma \ref{2.1}, we have
\begin{equation*}
\begin{split}
\iint_{Q_T} \left(\lambda
u_x^2+\mu|\mathbf{w}_x|^2+\nu|\mathbf{b}_x|^2 \right)dxdt=&\frac12\int_\Omega\rho\theta dx-\frac12\int_\Omega\rho_0\theta_0 dx
+\iint_{Q_T}pu_xdxdt\\
\leq &C+\frac{\lambda}{2}\iint_{Q_T} u_x^2dxdt+C\int_0^T\|\theta\|_{L^\infty(\Omega)}\int_\Omega\rho\theta dxdt\\
\leq &C+\frac{\lambda}{2}\iint_{Q_T} u_x^2dxdt.
\end{split}
\end{equation*}
Hence, $\iint_{Q_T} \left(\lambda
u_x^2+\mu|\mathbf{w}_x|^2+\nu|\mathbf{b}_x|^2 \right)dxdt \leq C$. Consequently,  $\int_0^T\|\mathbf{b}\|_{L^\infty(\Omega)}^{2}dt \leq C$.

Simple calculation yields
$$
D_t\left(\frac{1}{\rho F}\right) = \frac{1}{\lambda}\left(p+\frac12|\mathbf{b}|^2\right)\frac{1}{\rho F}.
$$
Thus,  $ \|(\rho F)^{-1} \|_{L^\infty(Q_T)}\leq C$ by using $\int_0^T\big(\|\theta\|_{L^\infty(\Omega)}+\|\mathbf{b}\|_{L^\infty(\Omega)}^{2}\big)dt \leq C$, so $\rho\geq C$.

It remains to show \eqref{rho11}$_4$. By  \eqref{rho11}$_1$ and  \eqref{rho11}$_2$, we  have
\begin{equation}\label{theta01}
\begin{split}
\iint_{Q_T}\frac{\theta_x^2}{\theta} dxdt\leq C.
\end{split}
\end{equation}
Then, from  Lemma \ref{2.1}, \eqref{rho11}$_1$ and
the H\"{o}lder  inequality, we have
  \begin{equation*}
\begin{split}
\theta \leq  \int_\Omega \theta dx
+ \int_\Omega |\theta_x|dx
\leq C+C\left(\int_\Omega\frac{\theta_x^2}{\theta}dx\right)^{1/2}\left(\int_\Omega
\theta  dx\right)^{1/2}.
\end{split}
\end{equation*}
Thus, \eqref{theta01} yields
\begin{equation}\label{theta00}
\begin{split}
 \int_0^T\|\theta\|_{L^\infty(\Omega)}^2dt \leq C.\\
\end{split}
\end{equation}
It follows from the H\"{o}lder inequality, Lemma 2.1 and \eqref{theta00}  that
\begin{equation*}
\begin{split}
\iint_{Q_T}|\theta_x|^{3/2}dxdt\leq&\left(\iint_{Q_T}\frac{\theta_x^2}{\theta} dxdt\right)^{3/4}
\left(\iint_{Q_T}\theta^3 dxdt\right)^{1/4}\\
\leq& C\left(\int_0^T\|\theta^{2}\|_{L^\infty(\Omega)}\int_\Omega \theta dxdt\right)^{1/4}\leq C.\\
\end{split}
\end{equation*}
And this completes the proof of the lemma.
\end{proof}

About the estimation on the magnetic field $\mathbf{b}$, we have

\begin{lemma}\label{2.3}Under the assumptions in Theorem \ref{existencethm}, we have
\begin{equation*}\label{b00}
\begin{split}
& \sup\limits_{0<t<T}\int_\Omega|\mathbf{b}|^4dx+\iint_{Q_T}|\mathbf{b}|^2|\mathbf{b}_x|^2dxdt\leq C.
\end{split}
\end{equation*}
\end{lemma}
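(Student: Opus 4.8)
The plan is to establish this $L^4$--in--space estimate for $\mathbf{b}$ by a weighted energy method, testing the magnetic field equation $\eqref{e1}_4$ against $|\mathbf{b}|^2\mathbf{b}$. Writing $\eqref{e1}_4$ as $\mathbf{b}_t+(u\mathbf{b})_x-\mathbf{w}_x=\nu\mathbf{b}_{xx}$, multiplying by $|\mathbf{b}|^2\mathbf{b}$ and integrating over $\Omega$, the time derivative gives $\tfrac14\frac{d}{dt}\int_\Omega|\mathbf{b}|^4\,dx$; the convection term, after one integration by parts using $u|_{x=0,1}=0$, contributes exactly $\tfrac34\int_\Omega u_x|\mathbf{b}|^4\,dx$; and the diffusion term, after integration by parts using $\mathbf{b}|_{x=0,1}=\mathbf{0}$, yields $\nu\int_\Omega\bigl(2(\mathbf{b}\cdot\mathbf{b}_x)^2+|\mathbf{b}|^2|\mathbf{b}_x|^2\bigr)\,dx\ge\nu\int_\Omega|\mathbf{b}|^2|\mathbf{b}_x|^2\,dx$, which is the good dissipation. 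For the coupling term $\int_\Omega\mathbf{w}_x\cdot|\mathbf{b}|^2\mathbf{b}\,dx$, for which no $\mu$-uniform $L^2$ bound on $\mathbf{w}_x$ is available, I would integrate by parts (the boundary contribution vanishes because $\mathbf{b}$ does), estimate $\bigl|\int_\Omega\mathbf{w}\cdot\partial_x(|\mathbf{b}|^2\mathbf{b})\,dx\bigr|\le C\int_\Omega|\mathbf{w}|\,|\mathbf{b}|^2|\mathbf{b}_x|\,dx$, and absorb $\tfrac{\nu}{4}\int_\Omega|\mathbf{b}|^2|\mathbf{b}_x|^2\,dx$ by Young's inequality, leaving $C\int_\Omega|\mathbf{w}|^2|\mathbf{b}|^2\,dx\le C\|\mathbf{b}\|_{L^\infty(\Omega)}^2\|\mathbf{w}\|_{L^2(\Omega)}^2\le C\|\mathbf{b}\|_{L^\infty(\Omega)}^2$, using $\rho\ge C^{-1}$ and $\int_\Omega\rho|\mathbf{w}|^2\le C$ from Lemmas \ref{2.1}--\ref{2.2}; by Lemma \ref{2.2} this is integrable in $t$.

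The main difficulty is the term $\int_\Omega u_x|\mathbf{b}|^4\,dx$, since $u_x$ is controlled only in $L^2(Q_T)$. I would bound it by $\|u_x\|_{L^2(\Omega)}\,\bigl\|\,|\mathbf{b}|^4\bigr\|_{L^2(\Omega)}\le\|u_x\|_{L^2(\Omega)}\,\|\mathbf{b}\|_{L^\infty(\Omega)}^2\bigl(\int_\Omega|\mathbf{b}|^4\bigr)^{1/2}$ and then exploit the one-dimensional Sobolev embedding together with the vanishing boundary data: since $|\mathbf{b}|^3$ vanishes at $x=0,1$, $\|\mathbf{b}\|_{L^\infty(\Omega)}^3\le C\int_\Omega|\mathbf{b}|^2|\mathbf{b}_x|\,dx\le C\bigl(\int_\Omega|\mathbf{b}|^2\bigr)^{1/2}\bigl(\int_\Omega|\mathbf{b}|^2|\mathbf{b}_x|^2\bigr)^{1/2}\le C\bigl(\int_\Omega|\mathbf{b}|^2|\mathbf{b}_x|^2\bigr)^{1/2}$ by Lemma \ref{2.1}. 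Hence $\|\mathbf{b}\|_{L^\infty(\Omega)}^2\le C\bigl(\int_\Omega|\mathbf{b}|^2|\mathbf{b}_x|^2\bigr)^{1/3}$, so $\int_\Omega u_x|\mathbf{b}|^4\,dx\le C\|u_x\|_{L^2(\Omega)}\bigl(\int_\Omega|\mathbf{b}|^2|\mathbf{b}_x|^2\bigr)^{1/3}\bigl(\int_\Omega|\mathbf{b}|^4\bigr)^{1/2}$. A Young inequality with exponents $3$ and $3/2$ gives, for any $\epsilon>0$,
\[
\int_\Omega |u_x|\,|\mathbf{b}|^4\,dx\le\epsilon\int_\Omega|\mathbf{b}|^2|\mathbf{b}_x|^2\,dx+C_\epsilon\,\|u_x\|_{L^2(\Omega)}^{3/2}\Bigl(\int_\Omega|\mathbf{b}|^4\,dx+1\Bigr).
\]

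Collecting the estimates and choosing $\epsilon$ small enough to absorb the extra dissipation, I would obtain, with $y(t):=\int_\Omega|\mathbf{b}|^4\,dx$,
\[
y'(t)+2\nu\int_\Omega|\mathbf{b}|^2|\mathbf{b}_x|^2\,dx\le G(t)\,y(t)+H(t),\qquad G(t)=C\|u_x\|_{L^2(\Omega)}^{3/2},\quad H(t)=C\|u_x\|_{L^2(\Omega)}^{3/2}+C\|\mathbf{b}\|_{L^\infty(\Omega)}^2.
\]
By Lemma \ref{2.2}, $\int_0^T\|u_x\|_{L^2(\Omega)}^{3/2}\,dt\le\bigl(\iint_{Q_T}u_x^2\bigr)^{3/4}T^{1/4}\le C$ and $\int_0^T\|\mathbf{b}\|_{L^\infty(\Omega)}^2\,dt\le C$, so $G,H\in L^1(0,T)$; moreover $y(0)=\int_\Omega|\mathbf{b}_0|^4\,dx\le C$ by \eqref{assumption1}. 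Gronwall's inequality then gives $\sup_{0<t<T}y(t)\le C$, and integrating the differential inequality in $t$ yields $\iint_{Q_T}|\mathbf{b}|^2|\mathbf{b}_x|^2\,dx\,dt\le C$, which is the claim. The only genuine obstacles are the missing $\mu$-uniform bound on $\mathbf{w}_x$, handled by the integration by parts, and the supercritical-looking term $\int_\Omega u_x|\mathbf{b}|^4$, controlled via the interpolation $\|\mathbf{b}\|_{L^\infty(\Omega)}^6\le C\int_\Omega|\mathbf{b}|^2|\mathbf{b}_x|^2\,dx$ and the fact that $\|u_x\|_{L^2(\Omega)}^{3/2}$ is time-integrable; the remaining manipulations are routine.
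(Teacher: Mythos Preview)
Your proof is correct and follows the same overall strategy as the paper: test $\eqref{e1}_4$ against $|\mathbf{b}|^2\mathbf{b}$, handle the $\mathbf{w}_x$ coupling by an integration by parts so that only $\|\mathbf{w}\|_{L^2(\Omega)}^2$ and $\|\mathbf{b}\|_{L^\infty(\Omega)}^2$ enter, and close by Gronwall.

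The only difference is in the treatment of the convection term. You keep it in the form $\tfrac34\int_\Omega u_x|\mathbf{b}|^4\,dx$ and control it via the interpolation $\|\mathbf{b}\|_{L^\infty(\Omega)}^6\le C\int_\Omega|\mathbf{b}|^2|\mathbf{b}_x|^2\,dx$ together with a Young inequality with exponents $(3,3/2)$, producing the Gronwall weight $\|u_x\|_{L^2(\Omega)}^{3/2}$. The paper instead integrates by parts once more to write the term as $3\iint_{Q_t} u\,(\mathbf{b}\cdot\mathbf{b}_x)|\mathbf{b}|^2\,dx\,ds$, and then a single application of Young's inequality absorbs $\tfrac{\nu}{4}\iint|\mathbf{b}|^2|\mathbf{b}_x|^2$ and leaves the Gronwall weight $\|u\|_{L^\infty(\Omega)}^2\le C\|u_x\|_{L^2(\Omega)}^2\in L^1(0,T)$. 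Both routes work; the paper's is slightly more direct since it avoids the extra interpolation step for $\|\mathbf{b}\|_{L^\infty(\Omega)}$, while yours has the minor advantage of not relying on the boundary condition $u|_{x=0,1}=0$ a second time.
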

\begin{proof}Taking the inner product of \eqref{e1}$_4$ with $4|\mathbf{b}|^2\mathbf{b}$ and integrating over $Q_t$, we obtain
\begin{equation}\label{b0}
\begin{split}
&\int_\Omega|\mathbf{b}|^4dx+4\nu\iint_{Q_t}|\mathbf{b}|^2|\mathbf{b}_x|^2dxds+8\nu\iint_{Q_t}|\mathbf{b}\cdot \mathbf{b}_x|^2dxds\\
&=\int_\Omega|\mathbf{b}_0|^4dx+4\iint_{Q_t} \mathbf{w}_x \cdot(|\mathbf{b}|^2\mathbf{b})dxds-4\iint_{Q_t} (u \mathbf{b})_x\cdot(|\mathbf{b}|^2\mathbf{b})dxds.\\
\end{split}
\end{equation}
Using the  Young inequality, we have
\begin{equation}\label{b1}
\begin{split}
  \iint_{Q_t} \mathbf{w}_x \cdot(\mathbf{b} |\mathbf{b}|^2) dxds
 &=-\iint_{Q_t} \mathbf{w} \cdot(\mathbf{b}_x |\mathbf{b}|^2)dxds-2\iint_{Q_t} (\mathbf{w} \cdot\mathbf{b} )(\mathbf{b}\cdot \mathbf{b}_x)dxds\\
 &\leq \frac{\nu}{4}\iint_{Q_t} |\mathbf{b}|^2 |\mathbf{b}_x|^2dxds+C\iint_{Q_t} |\mathbf{w}|^2 |\mathbf{b}|^2 dxds\\
 &\leq \frac{\nu}{4}\iint_{Q_t} |\mathbf{b}|^2 |\mathbf{b}_x|^2dxds+C\int_0^t\|\mathbf{b}\|_{L^\infty(\Omega)}^2 \int_\Omega |\mathbf{w}|^2dxds\\
 &\leq\frac{\nu}{4}\iint_{Q_t} |\mathbf{b}|^2 |\mathbf{b}_x|^2dxds+C,
\end{split}
\end{equation}
where we have used \eqref{ba1}$_2$ and \eqref{rho11}$_3$. On the other hand, we have
\begin{equation}\label{b2}
\begin{split}
 &-\iint_{Q_t} (u \mathbf{b})_x\cdot|\mathbf{b}|^2\mathbf{b}dxds=3\iint_{Q_t}  u  (\mathbf{b}_x \cdot\mathbf{b})|\mathbf{b}|^2 dxds\\
 &\leq  \frac{\nu}{4}\iint_{Q_t} |\mathbf{b}|^2  |\mathbf{b}_x|^2 dxds+C\iint_{Q_t}   u^2 |\mathbf{b}|^4 dxds\\
 &\leq\frac{\nu}{4}\iint_{Q_t} |\mathbf{b}|^2  |\mathbf{b}_x|^2 dxds+C\int_0^t\|u^2\|_{L^\infty(\Omega)}\int_\Omega |\mathbf{b}|^4 dxds.
\end{split}
\end{equation}
Plugging \eqref{b1} and \eqref{b2} into \eqref{b0} and using the
Gronwall inequality, we complete the proof of the lemma by noticing
$\int_0^T\|u^2\|_{L^\infty(\Omega)} dt\leq C\iint_{Q_T}u_x^2dxdt\leq
C$.
\end{proof}

\begin{lemma}\label{2.4}Under the assumptions in Theorem \ref{existencethm}, we have
\begin{equation}\label{rho}
\begin{split}
&\sup\limits_{0<t<T}\int_\Omega\rho_x^2dx+\iint_{Q_T} \big(\rho_t^2+\theta
\rho_x^2\big) dxdt\leq C,\\
&\left|\rho(x,t)-\rho(y,s)\right|\leq C\left(|x-y|^{1/2}
+|s-t|^{1/4}\right),\quad\forall (x, t), (y, s) \in \overline Q_T.
\end{split}
\end{equation}
\end{lemma}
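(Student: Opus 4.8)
The plan is the classical one-dimensional argument (cf.\ \cite{KS} and its MHD analogues), organised around an effective-flux quantity. Write $D_t:=\partial_t+u\partial_x$ and set
\begin{equation*}
\xi:=\lambda\frac{\rho_x}{\rho}+\rho u .
\end{equation*}
From $\eqref{e1}_1$ one has $u_x=-(\ln\rho)_t-u(\ln\rho)_x$, hence $\lambda u_{xx}=-\lambda D_t\!\big((\ln\rho)_x\big)-\lambda u_x(\ln\rho)_x$; inserting this into $\eqref{e1}_2$, rewritten as $\rho D_tu+p_x+\tfrac12(|\mathbf{b}|^2)_x=\lambda u_{xx}$, and using $D_t\rho=-\rho u_x$, one is led to
\begin{equation*}
D_t\xi+u_x\xi=-p_x-\tfrac12(|\mathbf{b}|^2)_x ,\qquad\text{i.e.}\qquad \rho\,D_t\!\Big(\tfrac{\xi}{\rho}\Big)=-p_x-\tfrac12(|\mathbf{b}|^2)_x .
\end{equation*}
The decisive point is to estimate $\int_\Omega\xi^2/(2\rho)\,dx$ rather than $\int_\Omega\rho_x^2\,dx$ directly. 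Multiplying the second identity by $\xi/\rho$, integrating over $\Omega$, and using $\rho_t+(\rho u)_x=0$ together with $u|_{x=0,1}=0$ (so that $\tfrac{d}{dt}\int_\Omega\rho f\,dx=\int_\Omega\rho D_tf\,dx$), all terms proportional to $\int_\Omega u_x\xi^2\,dx$ or $\int_\Omega u\rho_x\xi^2/\rho^2\,dx$ cancel, leaving
\begin{equation*}
\frac{d}{dt}\int_\Omega\frac{\xi^2}{2\rho}\,dx=-\int_\Omega\frac{\xi}{\rho}\Big(p_x+\tfrac12(|\mathbf{b}|^2)_x\Big)\,dx .
\end{equation*}

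Next I would bound the right-hand side. Using $p_x=\gamma\theta\rho_x+\gamma\rho\theta_x$, $\tfrac12(|\mathbf{b}|^2)_x=\mathbf{b}\cdot\mathbf{b}_x$ and $\xi/\rho=\lambda\rho_x/\rho^2+u$, the leading contribution is $-\gamma\lambda\int_\Omega\theta\rho_x^2/\rho^2\,dx\le-C^{-1}\int_\Omega\theta\rho_x^2\,dx$, a genuine dissipation (Lemma \ref{2.2} gives $C^{-1}\le\rho\le C$, $\theta\ge C^{-1}$). By Young's inequality with a small parameter, each of the remaining terms is bounded by $\varepsilon\int_\Omega\theta\rho_x^2\,dx$ plus a quantity integrable in $t$ over $(0,T)$; this uses the already established bounds $\int_0^T\|\theta\|_{L^\infty(\Omega)}^2\,dt\le C$, $\iint_{Q_T}\theta_x^2/\theta\,dxdt\le C$, $\iint_{Q_T}(u_x^2+|\mathbf{b}_x|^2)\,dxdt\le C$, $\sup_t\int_\Omega(u^2+|\mathbf{b}|^2)\,dx\le C$ (Lemmas \ref{2.1}--\ref{2.2}), $\iint_{Q_T}|\mathbf{b}|^2|\mathbf{b}_x|^2\,dxdt\le C$ (Lemma \ref{2.3}), and $\int_0^T\|u\|_{L^\infty(\Omega)}^2\,dt\le C$ (from the energy bound and the one-dimensional Sobolev inequality). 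Choosing $\varepsilon$ small and integrating in time ($\xi|_{t=0}\in L^2(\Omega)$ by \eqref{assumption1}) gives
\begin{equation*}
\sup_{0<t<T}\int_\Omega\frac{\xi^2}{\rho}\,dx+\iint_{Q_T}\theta\rho_x^2\,dxdt\le C .
\end{equation*}
Since $\rho_x=\lambda^{-1}\rho(\xi-\rho u)$ and $C^{-1}\le\rho\le C$, $\sup_t\int_\Omega u^2\le C$, this yields $\sup_t\int_\Omega\rho_x^2\,dx\le C$; finally $\rho_t=-\rho_xu-\rho u_x$ gives $\iint_{Q_T}\rho_t^2\,dxdt\le\sup_t\!\big(\int_\Omega\rho_x^2\,dx\big)\int_0^T\|u\|_{L^\infty}^2\,dt+C\iint_{Q_T}u_x^2\,dxdt\le C$. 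This establishes \eqref{rho}$_1$.

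The spatial part of \eqref{rho}$_2$ is immediate from $|\rho(x,t)-\rho(y,t)|\le|x-y|^{1/2}\big(\int_\Omega\rho_x^2\,dx\big)^{1/2}\le C|x-y|^{1/2}$. For the time part I would use the usual averaging device, necessary because $\rho_t$ is controlled only in $L^2(Q_T)$: for fixed $y$ and small $h>0$, averaging over an interval $I_h\subset\Omega$ of length $h$ with endpoint $y$,
\begin{equation*}
|\rho(y,t)-\rho(y,s)|\le\frac1h\int_{I_h}|\rho(y,\cdot)-\rho(z,\cdot)|\,dz+\frac1h\Big|\int_{I_h}\!\!\int_s^t\rho_\tau(z,\tau)\,d\tau\,dz\Big|\le Ch^{1/2}+C\,|t-s|^{1/2}h^{-1/2},
\end{equation*}
the first term by the spatial estimate, the second by Cauchy--Schwarz and $\|\rho_t\|_{L^2(Q_T)}\le C$; choosing $h=|t-s|^{1/2}$ (the case of $|t-s|$ bounded away from $0$ being trivial since $\rho\le C$) gives $|\rho(y,t)-\rho(y,s)|\le C|t-s|^{1/4}$, whence \eqref{rho}$_2$ by the triangle inequality.

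The only delicate point is the choice of weight in the first step: estimating $\int_\Omega\xi^2\,dx$ directly leaves a term $\tfrac12\int_\Omega u_x\xi^2\,dx$ of indefinite sign that cannot be closed with the estimates available here --- doing so would require an a priori bound on $u_t$ or on $\|u_x\|_{L^\infty}$, obtained only later via the parabolic $L^p$-theory of Lemma \ref{2.5}. The weight $1/\rho$ cancels precisely this term (and the analogous $\int_\Omega u\rho_x\xi^2/\rho^2\,dx$); after that, everything reduces to Young's inequality together with Lemmas \ref{2.1}--\ref{2.3}, and the time-Hölder bound is routine.
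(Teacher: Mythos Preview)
Your proposal is correct and follows essentially the same approach as the paper: your quantity $\xi=\lambda\rho_x/\rho+\rho u$ is precisely the paper's $\rho(u-\lambda\eta_x)$ with $\eta=1/\rho$, and testing against $\xi/\rho=(u-\lambda\eta_x)$ yields the same energy identity, after which both arguments use Young's inequality with the bounds from Lemmas~\ref{2.1}--\ref{2.3} (the paper closes via Gronwall with coefficient $1+\|\theta\|_{L^\infty}$, whereas your splitting absorbs everything into $\varepsilon\!\int\!\theta\rho_x^2$ plus $L^1_t$ terms---a cosmetic difference). The H\"older estimate via averaging over an interval of length $|t-s|^{1/2}$ is likewise the same as the paper's.
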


\begin{proof}
Set $\eta=1/\rho$. It follows from the equation \eqref{e1}$_1$ that
$u_x=\rho(\eta_t+u\eta_x).$
Substituting it into \eqref{e1}$_2$ yields
\begin{equation*}
\left[\rho(u-\lambda\eta_x)\right]_t+\left[\rho
u(u-\lambda\eta_x)\right]_x
=\gamma\rho^2(\theta\eta_x-\eta\theta_x)-\mathbf{b}\cdot\mathbf{b}_x.
\end{equation*}
Multiplying it by $(u-\lambda\eta_x)$ and integrating over $Q_t$, we
have
\begin{equation}\label{equ10}
\begin{split}
&\frac12\int_\Omega\rho
(u-\lambda\eta_x)^2dx+\gamma\lambda\iint_{Q_t}
\theta\rho^2\eta_x^{2}dxds\\
&=\frac12\int_\Omega\rho_0(u_0+ \lambda\rho_0^{-2}\rho_{0x})^2dx+\gamma\iint_{Q_t}\rho^2\theta u  \eta_x dxds
\\
&\quad-\gamma\iint_{Q_t}\rho^2\eta\theta_x (u-\lambda\eta_x)dxds-\iint_{Q_t}\mathbf{b}\cdot\mathbf{b}_x(u-\lambda\eta_x)dxds.
\end{split}
\end{equation}
Using the Young inequality and  Lemmas 2.1-2.2, we  obtain
\begin{equation}\label{equ11}
\begin{split}
 \gamma\iint_{Q_t}\rho^2\theta u  \eta_x dxds
 &\leq
\frac{\gamma\lambda}{2}\iint_{Q_t}\theta\rho^{2}\eta_x^2dxds+C\iint_{Q_t} \theta u^2 dxds\\
&\leq  \frac{\gamma\lambda}{2}\iint_{Q_t}\theta\rho^{2}\eta_x^2dxds+C\int_0^t\|\theta\|_{L^\infty(\Omega)}
\int_{\Omega} u^2 dxds\\
&\leq
C+\frac{\gamma\lambda}{2}\iint_{Q_t}\theta\rho^{2}\eta_x^2dxds.
\end{split}
\end{equation}
By the Cauchy inequality, \eqref{theta01} and Lemma \ref{2.3}, we have
\begin{equation}\label{equ9}
\begin{split}
&-\gamma\iint_{Q_t}\rho^2\eta\theta_x (u-\lambda\eta_x)dxds-\iint_{Q_t}\mathbf{b}\cdot\mathbf{b}_x(u-\lambda\eta_x)dxds\\
&\leq  C+C\iint_{Q_t}\theta \rho
(u-\lambda\eta_x)^2dxds+C\iint_{Q_t}\frac{\theta_x^2}{\theta}dxds+C\iint_{Q_t} \rho(u-\lambda\eta_x)^2dxds\\
&\leq  C+C\int_0^t\big(1+\|\theta\|_{L^\infty(\Omega)}\big)\int_\Omega\rho
(u-\lambda\eta_x)^2dxds.
\end{split}
\end{equation}
Substituting \eqref{equ11} and \eqref{equ9} into \eqref{equ10} and using the Gronwall inequality, we obtain
 \begin{equation*}\label{rho22}
\begin{split}
&\sup\limits_{0<t<T}\int_\Omega\rho_x^2dx+\iint_{Q_T}  \theta \rho_x^2 dxds\leq C .
\end{split}
\end{equation*}
 Using this estimate  and Lemma 2.2, one can derive from the equation \eqref{e1}$_1$ that
\begin{equation*}
\begin{split}
 \iint_{Q_T} \rho_t^2dxdt \leq & C\int_0^T\|u^2\|_{L^\infty(\Omega)}\int_\Omega\rho_x^2dxdt+C\iint_{Q_T}u_x^2dxdt \leq C,\\
\end{split}
\end{equation*}
 that implies \eqref{rho}$_1$.

  We now turn to \eqref{rho}$_2$. Let $\beta(x)=\rho(x,t)-\rho(x,s)$ for any $x \in [0, 1]$
  and $s, t \in [0, T]$ with $s\neq t$. Then for any $x\in [0, 1]$ and $\delta \in (0, 1/2]$, there exist  some
  $y\in [0, 1]$ and $\xi$ between $x$ and $y$ such that $\delta=|y-x|$  and
  $\beta(\xi)=\frac{1}{x-y}\int^x_y\beta(z)dz$, and
\begin{equation*}
\begin{split}
\beta(x)=\frac{1}{x-y}\int^x_y\beta(z)dz+\int_\xi^x\beta'(z)dz.
\end{split}
\end{equation*}
Thus, from the H\"{o}lder inequality and \eqref{rho}$_1$, we have
\begin{equation*}
\begin{split}
|\beta(x)|\leq& \frac{1}{\delta}\left|\int^x_y\beta(z)dz\right|+\left|\int_\xi^x\beta'(z)dz\right|\\
\leq &\frac{1}{\delta}\left|\int^x_y\hspace{-2mm}\int_s^t\rho_\tau d\tau dz\right|+\left|\int_\xi^x\left[\rho_z(z,t)-\rho_z(z,s)\right]dz\right|\\
\leq &\frac{1}{\delta}\left(\iint_{Q_T}\rho_\tau^2 d\tau dz\right)^{1/2}|x-y|^{1/2}|s-t|^{1/2} + \left(2\sup\limits_{0<t<T}\int_0^1 |\rho_z(z,t)|^2 dz\right)^{1/2}|x-\xi|^{1/2}\\
\leq & C\delta^{-1/2}|s-t|^{1/2}+C\delta^{1/2}.
\end{split}
\end{equation*}
If $0<|s-t|^{1/2}<1/2$, taking $\delta=|s-t|^{1/2}$ yields
\begin{equation}\label{rho6}
\begin{split}
|\rho(x,s)-\rho(x,t)|\leq C|s-t|^{1/4}.
\end{split}
\end{equation}
If $ |s-t|^{1/2}\geq 1/2$, then \eqref{rho6} holds because $\rho$ is uniformly bounded in $\mu$.

From \eqref{rho}$_1$, we have that
$
|\rho(x,t)-\rho(y,t)|=\Big|\int_y^x\rho_zdz\Big|\leq C|x-y|^{1/2}.
$
Thus, \eqref{rho}$_2$ is proved and this completes the proof of the lemma.
% is completed.
\end{proof}

The following  is the key lemma in this paper that leads to a new approach for
the estimation on the uniform bounds in the general setting presented in
 this paper.

\begin{lemma}\label{2.5} Under the assumptions in Theorem \ref{existencethm}, we have
\begin{equation}\label{uxx}
\begin{split}
\iint_{Q_T}|u_{xx}|^{m_0} dxdt\leq C,\quad m_0=\min\{m, 4/3\}.
\end{split}
\end{equation}
In particular,
\begin{equation}\label{u0}
\begin{split}
\int_0^T\|u_{x}\|_{L^\infty(\Omega)}^{m_0}dt \leq C.
\end{split}
\end{equation}
\end{lemma}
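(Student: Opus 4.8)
The plan is to view the momentum equation \eqref{e1}$_2$ as a linear parabolic equation for $u$ and apply the $L^p$-theory for such equations. First I would rewrite \eqref{e1}$_2$ in nondivergence form. Using the mass equation \eqref{e1}$_1$ to eliminate the $(\rho u)_t$ term, one gets
\begin{equation*}
\rho u_t + \rho u u_x - \lambda u_{xx} = -\Big(p+\tfrac12|\mathbf{b}|^2\Big)_x = -\gamma(\rho\theta)_x - \mathbf{b}\cdot\mathbf{b}_x,
\end{equation*}
i.e. $u_t - \frac{\lambda}{\rho} u_{xx} = -u u_x - \frac{\gamma}{\rho}(\rho\theta)_x - \frac{1}{\rho}\mathbf{b}\cdot\mathbf{b}_x =: G$, with homogeneous Dirichlet boundary condition $u|_{x=0,1}=0$ and initial datum $u_0\in W^{2,m}(\Omega)$. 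Since $C^{-1}\le\rho\le C$ by Lemma \ref{2.2} and $\rho$ is Hölder continuous in $(x,t)$ by Lemma \ref{2.4}, the coefficient $\lambda/\rho$ is bounded, uniformly positive, and continuous, so the parabolic $L^p$ estimate (interior plus boundary, e.g. the Ladyzhenskaya–Solonnikov–Uralceva theory) gives
\begin{equation*}
\|u_t\|_{L^p(Q_T)} + \|u_{xx}\|_{L^p(Q_T)} \le C\big(\|G\|_{L^p(Q_T)} + \|u_0\|_{W^{2,p}(\Omega)}\big)
\end{equation*}
for any $1<p\le m$, with $C$ independent of $\mu$ (it depends only on $\lambda$, the bounds and modulus of continuity of $\rho$, and $T$).

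The main work, and the main obstacle, is to bound $\|G\|_{L^{m_0}(Q_T)}$ uniformly in $\mu$, where $m_0=\min\{m,4/3\}$. I would treat the three pieces separately. For $uu_x$: $u$ is uniformly bounded in $L^\infty(Q_T)$ (from $\|u\|_{L^\infty}\le C$, which follows from the energy estimate and $\int u_x^2\le C$ in Lemma \ref{2.2}), and $u_x\in L^2(Q_T)$ by Lemma \ref{2.2}, so $uu_x\in L^2(Q_T)\subset L^{m_0}(Q_T)$. For $\frac{\gamma}{\rho}(\rho\theta)_x = \frac{\gamma}{\rho}(\rho_x\theta + \rho\theta_x)$: using $\rho$ bounded, $\rho_x\in L^\infty(0,T;L^2(\Omega))$ (Lemma \ref{2.4}), $\theta\in L^2(0,T;L^\infty(\Omega))$ (from \eqref{theta00}), and $\theta_x\in L^{3/2}(Q_T)$ (from \eqref{rho11}$_4$), one checks $\rho_x\theta\in L^{?}$ via Hölder in $(x,t)$ and $\rho\theta_x\in L^{3/2}(Q_T)$; since $4/3<3/2$, both terms lie in $L^{m_0}(Q_T)$. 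For $\frac{1}{\rho}\mathbf{b}\cdot\mathbf{b}_x$: by Lemma \ref{2.3}, $\mathbf{b}\in L^\infty(0,T;L^4(\Omega))$ and $|\mathbf{b}||\mathbf{b}_x|\in L^2(Q_T)$, so this term is in $L^2(Q_T)$. Hence $G\in L^{m_0}(Q_T)$ with a $\mu$-independent bound, which with the parabolic estimate proves \eqref{uxx}.

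Finally, \eqref{u0} follows by interpolation in the space variable: for a.e. $t$, $\|u_x(\cdot,t)\|_{L^\infty(\Omega)}\le C\|u_x(\cdot,t)\|_{L^2(\Omega)}^{1-\vartheta}\|u_{xx}(\cdot,t)\|_{L^{m_0}(\Omega)}^{\vartheta} + C\|u_x(\cdot,t)\|_{L^2(\Omega)}$ by the Gagliardo–Nirenberg inequality on $\Omega=(0,1)$ (using $u_x|_{x=0,1}$ is controlled, or simply the one-dimensional embedding $W^{1,m_0}\hookrightarrow L^\infty$), and then raising to the power $m_0$ and integrating in $t$, using $\sup_t\|u_x(\cdot,t)\|_{L^2(\Omega)}\le C$ — which itself will be available from the later estimates, or one can argue more directly that $\|u_x\|_{L^\infty(\Omega)}\le C\|u_{xx}(\cdot,t)\|_{L^{m_0}(\Omega)} + C\|u_x(\cdot,t)\|_{L^2(\Omega)}$ since $m_0>1$ and integrate using $u_x\in L^\infty(0,T;L^2(\Omega))$ together with $m_0\le 4/3\le 2$. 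The delicate point to watch throughout is that the parabolic constant genuinely depends only on the Hölder norm of $\rho$ from Lemma \ref{2.4} and not on $\mu$ or on higher norms of the data.
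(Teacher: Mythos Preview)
Your approach is the same as the paper's: rewrite \eqref{e1}$_2$ in nondivergence form, invoke the H\"older continuity of $\rho$ from Lemma~\ref{2.4}, apply parabolic $L^p$ theory, and bound the right-hand side in $L^{4/3}(Q_T)$. However, two of your justifications invoke estimates that are \emph{not yet available} at this point in the argument, and this matters because Lemma~\ref{2.5} is precisely the bootstrap step.

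First, you claim $u\in L^\infty(Q_T)$ to control $uu_x$. That bound only appears in Lemma~\ref{2.10}, which depends on Lemma~\ref{2.5}. What you actually have after Lemmas~\ref{2.1}--\ref{2.2} is $\sup_t\int_\Omega u^2\,dx\le C$ and $\iint_{Q_T}u_x^2\,dxdt\le C$, which gives only $\int_0^T\|u\|_{L^\infty(\Omega)}^4\,dt\le C$ (via $u^2\le 2\int|uu_x|\le C(\int u_x^2)^{1/2}$). The paper uses this to show $uu_x\in L^{3/2}(Q_T)$ by Young's inequality: $\iint|uu_x|^{3/2}\le C\iint u_x^2 + C\iint u^6\le C$.

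Second, for \eqref{u0} you twice invoke $u_x\in L^\infty(0,T;L^2(\Omega))$, explicitly noting it ``will be available from the later estimates''---that is circular, since Lemma~\ref{2.10} uses \eqref{u0}. The fix is that you only need $u_x\in L^2(Q_T)$: from the one-dimensional embedding $\|u_x\|_{L^\infty(\Omega)}\le C\|u_x\|_{L^{m_0}(\Omega)}+C\|u_{xx}\|_{L^{m_0}(\Omega)}$, raise to the power $m_0$ and integrate in $t$; then $\int_0^T\|u_x\|_{L^{m_0}(\Omega)}^{m_0}\,dt=\iint_{Q_T}|u_x|^{m_0}\,dxdt\le C$ by H\"older since $m_0\le 2$ and $\iint u_x^2\le C$.
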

\begin{proof} We will apply the $L^p$ estimates of linear parabolic equations  (see \cite[Theorem 7.17]{Lie}) to obtain \eqref{uxx}. Firstly, rewrite the equation \eqref{e1}$_2$ as
\begin{equation}\label{f2}
\begin{split}
&
 u_t-\frac{\lambda}{\rho}u_{xx}=-uu_x-
 \gamma\theta_x-\frac{\gamma}{\rho}\rho_x\theta-\frac{1}{\rho}\mathbf{b}\cdot\mathbf{b}_x=:f.
\end{split}
\end{equation}
 From \eqref{rho}$_2$,
the coefficient $a(x,t):=\lambda/\rho$ is uniformly bounded in $C^{1/2,1/4}(\overline Q_T)$. By noticing
  the condition  $u_0 \in W^{2,m}(\Omega)$ for some $m>1$ in \eqref{assumption1}, it suffices to give a uniform bound of $f$ in $L^{4/3}(Q_T)$.

From Lemmas 2.2 and 2.3,  the second term and the forth term on
the  right hand side of \eqref{f2} are
 uniformly bounded in $L^{3/2}(Q_T)$ and $L^{2}(Q_T)$, respectively.

Using the H\"{o}lder inequality and Lemma 2.1, we obtain
\begin{equation*}
\begin{split}
 u^2&\leq2\int_\Omega|uu_x|dx \leq 2\left(\int_\Omega u^2dx\right)^{1/2}\left(\int_\Omega u_x^2dx\right)^{1/2} \leq C\left(\int_\Omega u_x^2dx\right)^{1/2}.
\end{split}
\end{equation*}
It follows from Lemma 2.2  that
$\int_0^T\|u\|_{L^\infty(\Omega)}^4 dt \leq   C.$
The Young inequality gives
\begin{equation*}
\begin{split}
  \iint_{Q_T}|uu_x|^{3/2}dxdt &\leq  C\iint_{Q_T}u_x^2dxdt+C\iint_{Q_T}u^6dxdt\\
 &\leq C+ C \int_0^T\|u\|_{L^\infty(\Omega)}^4 \int_{\Omega}u^2dxdt \leq C.
\end{split}
\end{equation*}

For the third term on the right hand side of \eqref{f2}, we use \eqref{rho}$_1$ and \eqref{theta00} to obtain
\begin{equation*}
\begin{split}
  \iint_{Q_T}|\rho_x \theta|^{4/3}dxdt
 &\leq C\iint_{Q_T}\rho_x^2\theta dxdt+C\iint_{Q_T}\theta^{2}dxdt
 \leq C.
\end{split}
\end{equation*}

Consequently, $\|f\|_{L^{4/3}(Q_T)}\leq C.$ Thus \eqref{uxx} is proved.

 \eqref{u0} is an immediate consequence of \eqref{uxx}, and this completes
 the proof of the lemma.
\end{proof}

As a direct application of Lemma 2.5, we have
\begin{lemma}\label{2.6}
Under the assumptions in Theorem \ref{existencethm}, we have
\begin{equation*}\label{we11}
\begin{split}
& \mu\sup\limits_{0<t<T}\int_\Omega|\mathbf{w}_x|^2dx+\mu^2 \iint_{Q_T}
 |\mathbf{w}_{xx}|^2 dxdt  \leq C.
\end{split}
\end{equation*}
 \end{lemma}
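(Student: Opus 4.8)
The plan is to obtain the weighted bound on $\mathbf{w}_x$ by testing the transverse velocity equation \eqref{e1}$_3$ with $-(\mathbf{w}_{xx})$ (equivalently, differentiating in $x$ and testing with $\mathbf{w}_x$), after first reducing to homogeneous boundary data for $\mathbf{w}$. Since $\mathbf{w}(0,t)=\mathbf{w}^-(t)$ and $\mathbf{w}(1,t)=\mathbf{w}^+(t)$ are nonzero, I would introduce the affine lift $\widetilde{\mathbf{w}}(x,t)=(1-x)\mathbf{w}^-(t)+x\mathbf{w}^+(t)$, which by \eqref{assumption1} is bounded in $C^1$ in time together with all its $x$-derivatives, and work with $\mathbf{v}:=\mathbf{w}-\widetilde{\mathbf{w}}$, which vanishes on $x=0,1$. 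The equation for $\mathbf{v}$ reads $\rho\mathbf{v}_t+\rho u\mathbf{v}_x-\mu\mathbf{v}_{xx}=\mathbf{b}_x-\rho\widetilde{\mathbf{w}}_t-\rho u\widetilde{\mathbf{w}}_x$, all of whose source terms are controlled in $L^2(Q_T)$ by Lemmas \ref{2.1}--\ref{2.3}.

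Next I would multiply this equation by $-\mathbf{v}_{xx}$ and integrate over $\Omega$. The principal term gives $\tfrac{d}{dt}\tfrac12\int_\Omega\rho|\mathbf{v}_x|^2\,dx$ (up to a $\rho_t|\mathbf{v}_x|^2$ term, which is harmless because $\rho_t\in L^2$ and we will already be carrying an $L^\infty_tL^2_x$ bound on $\mathbf{v}_x$) plus the good dissipation $\mu\int_\Omega|\mathbf{v}_{xx}|^2\,dx$. The convection term $\int_\Omega\rho u\mathbf{v}_x\cdot\mathbf{v}_{xx}\,dx$ and the source terms are all absorbed into $\tfrac\mu2\int_\Omega|\mathbf{v}_{xx}|^2\,dx$ at the cost of constants times $\|u\|_{L^\infty}^2\int_\Omega|\mathbf{v}_x|^2\,dx/\mu$ and $\|\,\text{source}\,\|_{L^2}^2/\mu$; integrating in time and using $\int_0^T\|u\|_{L^\infty(\Omega)}^2\,dt\le C$ (from Lemma \ref{2.2}, or the stronger Lemma \ref{2.5}) together with Gronwall gives $\mu\sup_t\int_\Omega|\mathbf{v}_x|^2\,dx+\mu^2\iint_{Q_T}|\mathbf{v}_{xx}|^2\,dx\,dt\le C$, provided one first divides through so the $\mu$-weights land as claimed. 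Converting back from $\mathbf{v}$ to $\mathbf{w}$ only adds the bounded lift $\widetilde{\mathbf{w}}$ and its derivatives, which changes nothing, and this is precisely the assertion of Lemma \ref{2.6}.

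The one subtlety — and the step I expect to need the most care — is the integration by parts in the convection term and in the $\mathbf{v}_t$ term near the boundary: since $\mathbf{v}$ (not $\mathbf{v}_x$) vanishes on $\partial\Omega$, the identity $\int_\Omega\rho\mathbf{v}_t\cdot(-\mathbf{v}_{xx})\,dx=\tfrac{d}{dt}\tfrac12\int_\Omega\rho|\mathbf{v}_x|^2\,dx-\tfrac12\int_\Omega\rho_t|\mathbf{v}_x|^2\,dx+\int_\Omega\rho_x\mathbf{v}_t\cdot\mathbf{v}_x\,dx$ (after moving one derivative) produces a boundary term $\int_\Omega(\rho\mathbf{v}_t)_x\cdot\mathbf{v}_x - [\rho\mathbf{v}_t\cdot\mathbf{v}_x]_{\partial\Omega}$; one must check the boundary contribution either vanishes or is controlled. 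A cleaner route avoiding this is to differentiate \eqref{e1}$_3$ in $x$ directly, then test with $\mathbf{w}_x$ and integrate, which moves the troublesome boundary terms onto $\mathbf{w}_{xx}$ paired with $\widetilde{\mathbf{w}}_x$-type quantities that are explicitly bounded; I would adopt whichever bookkeeping keeps every boundary term either zero (by $\mathbf{v}|_{\partial\Omega}=0$) or estimable by the already-established bounds. Aside from this, the estimate is a routine energy argument with $\mu$-weights, and Gronwall closes it because the only factor multiplying $\int_\Omega|\mathbf{w}_x|^2\,dx$ is $\|u\|_{L^\infty(\Omega)}^2$, which is integrable in time.
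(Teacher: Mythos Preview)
There is a genuine gap in the convection-term step, and it is fatal for the $\mu$-uniformity you need. When you absorb $\int_\Omega \rho u\,\mathbf{v}_x\cdot\mathbf{v}_{xx}\,dx$ into $\tfrac{\mu}{2}\int_\Omega|\mathbf{v}_{xx}|^2\,dx$ via Young's inequality, the remainder is $\tfrac{C}{\mu}\|u\|_{L^\infty}^2\int_\Omega|\mathbf{v}_x|^2\,dx$, exactly as you wrote. After multiplying by $\mu$ so that the quantity you Gronwall is $Y(t)=\mu\int_\Omega|\mathbf{v}_x|^2\,dx$, the coefficient in front of $Y$ is still $\tfrac{C}{\mu}\|u\|_{L^\infty}^2$, and the Gronwall factor becomes $\exp\bigl(\tfrac{C}{\mu}\int_0^T\|u\|_{L^\infty}^2\,dt\bigr)=\exp(C/\mu)$, which blows up as $\mu\to 0$. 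Your closing sentence (``the only factor multiplying $\int_\Omega|\mathbf{w}_x|^2\,dx$ is $\|u\|_{L^\infty}^2$'') drops precisely this $1/\mu$, and that is the error.

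The paper avoids this by \emph{not} absorbing the convection term into the dissipation. It first divides the equation by $\rho$ (so the time term is just $\mathbf{w}_t$, which sidesteps the $\rho_x\mathbf{v}_t\cdot\mathbf{v}_x$ nuisance you flagged), then multiplies by $\mu\mathbf{w}_{xx}$. The convection contribution $\mu\iint_{Q_t} u\,\mathbf{w}_x\cdot\mathbf{w}_{xx}\,dxds$ is integrated by parts in $x$ using $u|_{x=0,1}=0$ to become $-\tfrac{\mu}{2}\iint_{Q_t} u_x|\mathbf{w}_x|^2\,dxds$, which is bounded by $C\int_0^t\|u_x\|_{L^\infty(\Omega)}\bigl(\mu\int_\Omega|\mathbf{w}_x|^2\,dx\bigr)\,ds$ with \emph{no} $1/\mu$. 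Gronwall then closes using $\int_0^T\|u_x\|_{L^\infty(\Omega)}\,dt\le C$, which is exactly the content of Lemma~\ref{2.5} (and the reason that lemma is singled out as ``key''). The boundary issue you anticipated does arise --- one picks up $\mu\int_0^t\mathbf{w}_t\cdot\mathbf{w}_x\big|_{x=0}^{x=1}\,ds$ --- but since $\mathbf{w}_t(a,t)=(\mathbf{w}^\pm)'(t)$ is bounded, this is handled by the interpolation $\|\mathbf{w}_x\|_{L^\infty}^2\le C\int|\mathbf{w}_x|^2+C\bigl(\int|\mathbf{w}_x|^2\bigr)^{1/2}\bigl(\int|\mathbf{w}_{xx}|^2\bigr)^{1/2}$ and Young's inequality. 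Your affine-lift idea is a reasonable alternative for that boundary piece, but it does not cure the convection problem; you must integrate by parts there regardless.
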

\begin{proof}
Rewrite  \eqref{e1}$_3$ as
\begin{equation}\label{w12}
\begin{split}
 -\mathbf{w}_t+\frac{\mu}{\rho}\mathbf{w}_{xx}=u\mathbf{w}_x-\frac{1}{\rho}\mathbf{b}_x.
\end{split}
\end{equation}
Taking the inner product  with $\mu\mathbf{w}_{xx}$ and integrating over $Q_t$
yield
\begin{equation}\label{0v100}
\begin{split}
 & \frac\mu2\int_\Omega|\mathbf{w}_x|^2dx
 +\mu^2\iint_{Q_t}\frac{1}{\rho}|\mathbf{w}_{xx}|^2dxdx\\
  &=\frac\mu2\int_\Omega|\mathbf{w}_{0x}|^2dx-\mu\iint_{Q_t}\frac{1}{\rho}\mathbf{b}_x\cdot\mathbf{w}_{xx}dxds\\
&\quad -\frac{\mu}{2}\iint_{Q_t}u_x |\mathbf{w}_x|^2dxds+\mu\int_0^t \mathbf{w}_{t}\cdot\mathbf{w}_{x} \Big|_{x=0}^{x=1} ds\\
& \leq C\mu+\frac{\mu^2}{4}\iint_{Q_t}\frac{1}{\rho}|\mathbf{w}_{xx}|^2dxds+C\iint_{Q_t}|\mathbf{b}_x|^2dxds\\
&\quad+ C\int_0^t\|u_x\|_{L^\infty(\Omega)}\left(\mu\int_\Omega|\mathbf{w}_x|^2dx\right)ds+C\mu\int_0^t\|\mathbf{w}_x\|_{L^\infty(\Omega)} ds.
  \end{split}
\end{equation}
By the embedding  theorem and  H\"older inequality, we obtain
\begin{equation}\label{wx2}
\begin{split}
   |\mathbf{w}_x|^2\leq & C\Big(\int_\Omega|\mathbf{w}_x|^2dx+\int_\Omega|\mathbf{w}_{x}||\mathbf{w}_{xx}|dx\Big)\\
   \leq& C\int_\Omega|\mathbf{w}_x|^2dx+C\left(\int_\Omega|\mathbf{w}_{x}|^2dx\right)^{1/2}\left(\int_\Omega|\mathbf{w}_{xx}|^2dx\right)^{1/2}.
\end{split}
\end{equation}
Thus, using  the Young inequality yields
\begin{equation*}\label{w5}
\begin{split}
   &\mu\int_0^t\|\mathbf{w}_x\|_{L^\infty(\Omega)} ds\\
   &\leq  C\mu\int_0^t\left(\int_\Omega
|\mathbf{w}_{x}|^2dx\right)^{1/2}ds+C\int_0^t\mu^{1/4}\left(\mu\int_\Omega|\mathbf{w}_{x}|^2dx\right)^{1/4}
   \left(\mu^2\int_\Omega|\mathbf{w}_{xx}|^2dx\right)^{1/4}ds\\[1mm]
&\leq  C\sqrt{\mu}+\frac{C  \mu}{\epsilon}\iint_{Q_t}
|\mathbf{w}_{x}|^2dxds+\epsilon \mu^2
\iint_{Q_t}\frac{1}{\rho}|\mathbf{w}_{xx}|^2dxds,~~\forall \epsilon \in (0, 1).
\end{split}
\end{equation*}
Plugging  it into \eqref{0v100} and taking a small $\epsilon>0$, we have
\begin{equation*}
\begin{split}
  & \mu\int_\Omega|\mathbf{w}_x|^2dx
 +\mu^2\iint_{Q_t}\frac{1}{\rho}|\mathbf{w}_{xx}|^2dxds  \leq C  +C\int_0^t\big(1+\|u_x\|_{L^\infty}\big)\left(\mu\int_{\Omega}|\mathbf{w}_x|^2dx\right)ds.
  \end{split}
\end{equation*}
Thus, the lemma follows from the Gronwall inequality and  \eqref{u0}.
\end{proof}

We now turn to prove the other estimates  in
Theorem 1.1. For this, we need the following three lemmas.

\begin{lemma}\label{2.7} Under the assumptions in Theorem \ref{existencethm}, we have
\begin{equation*}
\begin{split}
    & \int_\Omega|\mathbf{b}_x|^2\omega^2dx+ \iint_{Q_t}|\mathbf{b}_{xx}|^2\omega^2dxds
    \leq C\iint_{Q_t}  |\mathbf{w}_{x}|^2\omega^2  dxds+C\left(\iint_{Q_t} u_{xx}^2
    dxds\right)^{1/2},
  \end{split}
\end{equation*}
where $\omega$ is the same as the one defined in Theorem 1.1.
 \end{lemma}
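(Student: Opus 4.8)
The plan is to test the magnetic field equation \eqref{e1}$_4$ against a suitable weighted multiplier and exploit the degenerate weight $\omega^2$, which vanishes at the boundary, to kill the boundary terms that would otherwise appear upon integration by parts. Concretely, I would rewrite \eqref{e1}$_4$ as $\mathbf{b}_t+u\mathbf{b}_x+u_x\mathbf{b}-\mathbf{w}_x=\nu\mathbf{b}_{xx}$, take the inner product with $-\mathbf{b}_{xx}\omega^2$ (or equivalently with $(\omega^2\mathbf{b}_x)_x$ after reorganizing), and integrate over $Q_t$. The leading term produces $\frac12\int_\Omega|\mathbf{b}_x|^2\omega^2\,dx$ plus $\nu\iint_{Q_t}|\mathbf{b}_{xx}|^2\omega^2\,dxds$ after integration by parts in $t$ for the first and using the equation for the second; the point is that since $\omega(0)=\omega(1)=0$, no boundary contributions survive, and $|\omega'|=1$ a.e. so the error terms from differentiating the weight are controlled by $\iint_{Q_t}|\mathbf{b}_x||\mathbf{b}_{xx}|\omega\,dxds$, absorbable into the good term by Cauchy--Schwarz at the cost of $C\iint_{Q_t}|\mathbf{b}_x|^2\,dxds\le C$ via Lemma \ref{2.2}.

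The remaining source terms on the right are $-\iint_{Q_t}(u\mathbf{b}_x+u_x\mathbf{b}-\mathbf{w}_x)\cdot\mathbf{b}_{xx}\omega^2\,dxds$. The term with $\mathbf{w}_x$ directly yields, by Young's inequality, a contribution bounded by $\frac\nu8\iint_{Q_t}|\mathbf{b}_{xx}|^2\omega^2\,dxds+C\iint_{Q_t}|\mathbf{w}_x|^2\omega^2\,dxds$, which is exactly the first term on the right-hand side of the claimed estimate. The term $\iint_{Q_t}u\mathbf{b}_x\cdot\mathbf{b}_{xx}\omega^2$ I would integrate by parts in $x$ to move a derivative off $\mathbf{b}_{xx}$, turning it into $\frac12\iint_{Q_t}u_x|\mathbf{b}_x|^2\omega^2+\iint_{Q_t}u\,\omega\omega'|\mathbf{b}_x|^2$, both controlled by $\int_0^t\|u_x\|_{L^\infty}\int_\Omega|\mathbf{b}_x|^2\omega^2\,dx\,ds+C$ using $\|u\|_{L^\infty(Q_T)}\le C$ and feeding into Gronwall; here I invoke $\int_0^T\|u_x\|_{L^\infty}^{m_0}dt\le C$ from Lemma \ref{2.5}. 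For the term $\iint_{Q_t}u_x\mathbf{b}\cdot\mathbf{b}_{xx}\omega^2$ I would again use Young to split off $\frac\nu8\iint_{Q_t}|\mathbf{b}_{xx}|^2\omega^2$ plus $C\iint_{Q_t}u_x^2|\mathbf{b}|^2\omega^2\le C\int_0^T\|\mathbf{b}\|_{L^\infty}^2\,dt\cdot\sup\iint u_x^2$, but since we only have $\iint u_x^2\le C$ and $\int_0^T\|\mathbf{b}\|_{L^\infty}^2\le C$ without a pointwise-in-$t$ bound, I would instead bound $\iint_{Q_t}u_x^2|\mathbf{b}|^2\omega^2\le \|\mathbf{b}\|_{L^\infty(Q_T)}^2\iint u_x^2$ — wait, $\mathbf{b}$ is not yet known bounded — so more carefully estimate it as $C\left(\iint_{Q_t}u_x^4\right)^{1/2}\left(\iint_{Q_t}|\mathbf{b}|^4\right)^{1/2}$ using Lemma \ref{2.3}; but $\iint u_x^4$ is not available either, so the cleanest route is $\iint u_x^2|\mathbf{b}|^2\le \int_0^t\|\mathbf{b}\|_{L^\infty}^2\int_\Omega u_x^2\,dx\,ds\le C\sup_s\|u_x(s)\|_{L^2}^2$, which is likewise not yet proven.

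The main obstacle, then, is precisely controlling the $u_x\mathbf{b}$ cross term without circularly invoking estimates that come later; the resolution I anticipate is to not estimate it in $L^2\omega^2$ against $\mathbf{b}_{xx}$ but rather, after the integration by parts that is already natural here, to write $\iint_{Q_t}u_x\mathbf{b}\cdot\mathbf{b}_{xx}\omega^2 = -\iint_{Q_t}(u_x\mathbf{b}\omega^2)_x\cdot\mathbf{b}_x$ and expand, producing $-\iint u_{xx}\mathbf{b}\cdot\mathbf{b}_x\omega^2-\iint u_x|\mathbf{b}_x|^2\omega^2-2\iint u_x\mathbf{b}\cdot\mathbf{b}_x\omega\omega'$; the first of these is bounded by $\left(\iint_{Q_t}u_{xx}^2\right)^{1/2}\left(\iint_{Q_t}|\mathbf{b}|^2|\mathbf{b}_x|^2\right)^{1/2}\le C\left(\iint_{Q_t}u_{xx}^2\right)^{1/2}$ by Lemma \ref{2.3}, which is exactly the second term in the claimed bound, and the other two feed into the Gronwall loop via $\|u_x\|_{L^\infty}$ and the already-established $\iint|\mathbf{b}_x|^2\le C$. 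Collecting everything, absorbing the $\frac\nu8$-terms, and applying Gronwall in $t$ with the integrable weight $1+\|u_x\|_{L^\infty}$ then yields the stated inequality.
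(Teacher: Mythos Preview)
Your approach is essentially the paper's: test \eqref{e1}$_4$ against $\mathbf{b}_{xx}\omega^2$, integrate by parts so the weight kills the boundary terms, and close by Gronwall with the integrable weight $1+\|u_x\|_{L^\infty(\Omega)}$. Two small remarks. First, your appeal to $\|u\|_{L^\infty(Q_T)}\le C$ is premature --- that bound is only obtained in Lemma~\ref{2.10}; the correct justification for controlling $\iint u\,\omega\omega'|\mathbf{b}_x|^2$ by $\int_0^t\|u_x\|_{L^\infty}\int_\Omega|\mathbf{b}_x|^2\omega^2$ is the pointwise estimate $|u(x,t)|\le\|u_x(\cdot,t)\|_{L^\infty(\Omega)}\,\omega(x)$ (valid because $u$ vanishes at both endpoints), which is precisely the paper's \eqref{u3}. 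Second, for the cross term $\iint u_x\mathbf{b}\cdot\mathbf{b}_{xx}\omega^2$ the paper does not integrate by parts again; it applies Young's inequality directly and then bounds $\iint u_x^2|\mathbf{b}|^2\omega^2\le C\int_0^t\|u_x\|_{L^\infty}^2\,ds\le C\big(\iint u_{xx}^2\big)^{1/2}$ via $\|u_x\|_{L^\infty}^2\le 2\int|u_xu_{xx}|$ and Cauchy--Schwarz. Your alternative through $\iint u_{xx}\mathbf{b}\cdot\mathbf{b}_x\omega^2$ and Lemma~\ref{2.3} is equally valid, just slightly longer.
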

 \begin{proof}
Taking the inner product of  \eqref{e1}$_4$ with $\mathbf{b}_{xx}\omega^2(x)$ and
integrating over $Q_t$ give
\begin{equation}\label{b111}
\begin{split}
   &-\iint_{Q_t} \mathbf{b}_t\cdot \mathbf{b}_{xx}\omega^2 dxdt+\nu\iint_{Q_t} |\mathbf{b}_{xx}|^2\omega^2 dxds\\
    &=\iint_{Q_t}  (u\mathbf{b})_x\cdot \mathbf{b}_{xx}\omega^2  dxds-\iint_{Q_t} \mathbf{w}_x\cdot \mathbf{b}_{xx}\omega^2 dxds.
  \end{split}
\end{equation}
To estimate the first integral on the left hand side of \eqref{b111}, we use
integration by parts and  \eqref{e1}$_4$ to obtain
\begin{equation}\label{be1}
\begin{split}
   \iint_{Q_t} \mathbf{b}_t\cdot \mathbf{b}_{xx}\omega^2 dxds
   &=-\frac{1}{2}\int_\Omega|\mathbf{b}_x|^2\omega^2   dx
   +\frac{1}{2}\int_\Omega|\mathbf{b}_{0x}|^2\omega^2   dx-2\iint_{Q_t} \mathbf{b}_t\cdot \mathbf{b}_{x}\omega\omega' dxds\\
   &=-\frac{1}{2}\int_\Omega|\mathbf{b}_x|^2\omega^2dx
   +\frac{1}{2}\int_\Omega|\mathbf{b}_{0x}|^2\omega^2dx\\
    &\quad-2\iint_{Q_t} \left(\nu\mathbf{b}_{xx}+\mathbf{w}_x-u\mathbf{b}_x-u_x\mathbf{b}\right)\cdot \mathbf{b}_{x}\omega\omega'  dxds.
  \end{split}
\end{equation}
For the third term on right hand side of
\eqref{be1}, by the Cauchy inequality and Lemmas \ref{2.2} and \ref{2.3}, we have
\begin{equation*}
\begin{split}
    &-2\iint_{Q_t} \left(\nu\mathbf{b}_{xx}+\mathbf{w}_x-u_x\mathbf{b}\right)\cdot \mathbf{b}_{x}\omega\omega'  dxds\\
   &\leq  \frac{\nu}{4}\iint_{Q_t}|\mathbf{b}_{xx}|^2\omega^2dxds  +C\iint_{Q_t}|\mathbf{b}_{x}|^2dxds+C\iint_{Q_t}  |\mathbf{w}_{x}|^2\omega^2  dxds\\
    & \quad+  C\iint_{Q_t} u_x^2dxds+C\iint_{Q_t} |\mathbf{b}\cdot\mathbf{b}_x|^2 dxds\\
    &\leq C+ \frac{\nu}{4}\iint_{Q_t}|\mathbf{b}_{xx}|^2\omega^2dxds  +C\iint_{Q_t}  |\mathbf{w}_{x}|^2\omega^2  dxds.\\
  \end{split}
\end{equation*}
By noticing $u(1,t)=u(0,t)=0$, we have
\begin{equation}\label{u3}
\begin{split}
|u(x,t)|\leq \|u_x\|_{L^\infty(\Omega)}\omega(x),
\end{split}
\end{equation}
thus,
\begin{equation*}
\begin{split}
    &2\iint_{Q_t} u |\mathbf{b}_{x}|^2\omega\omega'dxds
    \leq  C\int_{0}^t \|u_x\|_{L^\infty(\Omega)} \int_\Omega|\mathbf{b}_{x}|^2\omega^2dxds.\\
  \end{split}
\end{equation*}
Substituting them into \eqref{be1} yields
\begin{equation}\label{be3}
\begin{split}
     \iint_{Q_t} \mathbf{b}_t\cdot \mathbf{b}_{xx}\omega^2 dxdt  \leq & C-\frac{1}{2}\int_\Omega|\mathbf{b}_x|^2\omega^2dx+\frac{\nu}{4}\iint_{Q_t}|\mathbf{b}_{xx}|^2\omega^2dxds\\
& +C\int_{0}^t\|u_x\|_{L^\infty(\Omega)}\int_\Omega|\mathbf{b}_{x}|^2\omega^2dxds
 +C\iint_{Q_t}  |\mathbf{w}_{x}|^2\omega^2  dxds.
  \end{split}
\end{equation}
As to the two terms on the right hand side of \eqref{b111}, we use the  Young inequality to obtain
\begin{equation}\label{be2}
\begin{split}
   & \iint_{Q_t}  (u\mathbf{b})_x\cdot \mathbf{b}_{xx}\omega^2  dxds-\iint_{Q_t} \mathbf{w}_x\cdot \mathbf{b}_{xx}\omega^2 dxds\\
   &\leq  \frac{\nu}{4}\iint_{Q_t} |\mathbf{b}_{xx}|^2\omega^2  dxds+C\iint_{Q_t}  |(u\mathbf{b})_x|^2\omega^2  dxds
   +C\iint_{Q_t}  |\mathbf{w}_{x}|^2\omega^2  dxds.
  \end{split}
\end{equation}
It remains to treat the second term on the right hand side of \eqref{be2}.  By Lemma \ref{2.2}, we obtain
\begin{equation}\label{ux}
\begin{split}
     &\int_0^t\|u_x\|_{L^\infty(\Omega)}^2 ds\leq C\iint_{Q_t}|u_xu_{xx}|dxds  \leq C\left(\iint_{Q_t}u_{xx}^2 dxds\right)^{1/2}.
  \end{split}
\end{equation}
It follows from Lemma 2.1 that
\begin{equation*}
\begin{split}
    \iint_{Q_t}  |(u\mathbf{b})_x|^2\omega^2  dxds \leq &C\iint_{Q_t} u^2|\mathbf{b}_{x}|^2\omega^2  dxds+C\iint_{Q_t} u_x^2|\mathbf{b} |^2\omega^2  dxds\\
  \leq & C\int_0^t\|u^2\|_{L^\infty(\Omega)}\int_\Omega|\mathbf{b}_{x}|^2\omega^2  dxds+C\left(\iint_{Q_t} u_{xx}^2 dxds\right)^{1/2}.
  \end{split}
\end{equation*}
Substituting it into \eqref{be2} and then  substituting the resulting inequality and \eqref{be3} into \eqref{b111}, then
the  Gronwall inequality completes the proof of the lemma.
\end{proof}

\begin{lemma}\label{2.8}Under the assumptions in Theorem \ref{existencethm}, we have
\begin{equation}\label{w1}
\begin{split}
   &  \int_\Omega|\mathbf{w}_x|^2\omega^2    dx+\mu \iint_{Q_t} |\mathbf{w}_{xx}|^2\omega^2 dxds\leq C+C\left(\iint_{Q_t} u_{xx}^2 dxds\right)^{1/2},\\
   &\iint_{Q_t}\big(|\mathbf{w}_t|^2+u^2|\mathbf{w}_x|^2\big)dxdt\leq C+C \iint_{Q_t}u_{xx}^2 dxds.
  \end{split}
\end{equation}
\end{lemma}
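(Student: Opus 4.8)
The strategy is to test the rewritten equation \eqref{w12} (equivalently \eqref{e1}$_3$) against the weighted multiplier $-\mathbf{w}_{xx}\omega^2$ and integrate over $Q_t$, exactly paralleling the proof of Lemma \ref{2.7}, but now tracking the $\mu$-dependence carefully. Starting from $\rho\mathbf{w}_t+\rho u\mathbf{w}_x-\mathbf{b}_x=\mu\mathbf{w}_{xx}$, the inner product with $-\mathbf{w}_{xx}\omega^2$ produces, after integration by parts in the $\mathbf{w}_t$ term, the quantity $\tfrac12\frac{d}{dt}\int_\Omega\rho|\mathbf{w}_x|^2\omega^2\,dx$ (up to lower-order terms coming from $\rho_t$, $\rho_x$, and $\omega\omega'$) plus the good dissipation $\mu\iint_{Q_t}|\mathbf{w}_{xx}|^2\omega^2\,dxds$, balanced against the transport term $\iint u\mathbf{w}_x\cdot\mathbf{w}_{xx}\omega^2$, the forcing term $-\iint\mathbf{b}_x\cdot\mathbf{w}_{xx}\omega^2$, and boundary-type error terms $\iint\mathbf{w}_t\cdot\mathbf{w}_x\omega\omega'$ and $\iint u\mathbf{w}_x\cdot\mathbf{w}_x\omega\omega'$. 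The weight $\omega^2$ vanishes at $x=0,1$, so the genuine boundary terms disappear and one is left only with interior commutator errors supported where $\omega'=\pm1$.

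The estimates then go as follows. First I would bound $\iint u\mathbf{w}_x\cdot\mathbf{w}_{xx}\omega^2$ and $-\iint\mathbf{b}_x\cdot\mathbf{w}_{xx}\omega^2$ by Young's inequality, absorbing $\tfrac{\mu}{4}\iint|\mathbf{w}_{xx}|^2\omega^2$ into the left side and leaving $C\mu^{-1}\iint(u^2|\mathbf{w}_x|^2+|\mathbf{b}_x|^2)\omega^2$; however, to avoid the bad $\mu^{-1}$ factor one instead keeps these as $\epsilon\mu\iint|\mathbf{w}_{xx}|^2\omega^2 + C\iint(|\mathbf{b}_x|^2+u^2|\mathbf{w}_x|^2)\omega^2$ only when the companion factor is already $O(\mu)$; more cleanly, one splits $\iint u\mathbf{w}_x\cdot\mathbf{w}_{xx}\omega^2 = -\tfrac12\iint u_x|\mathbf{w}_x|^2\omega^2 - \iint u|\mathbf{w}_x|^2\omega\omega'$ by integration by parts, so no $\mathbf{w}_{xx}$ appears there at all, and the remaining terms are controlled by $\int_0^t\|u_x\|_{L^\infty}\int_\Omega|\mathbf{w}_x|^2\omega^2\,ds$ using \eqref{u3} and \eqref{ux}. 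For the forcing term I would integrate by parts once more: $-\iint\mathbf{b}_x\cdot\mathbf{w}_{xx}\omega^2 = \iint\mathbf{b}_{xx}\cdot\mathbf{w}_x\omega^2 + 2\iint\mathbf{b}_x\cdot\mathbf{w}_x\omega\omega'$, then invoke Lemma \ref{2.7} to control $\iint|\mathbf{b}_{xx}|^2\omega^2$ in terms of $\iint|\mathbf{w}_x|^2\omega^2$ and $(\iint u_{xx}^2)^{1/2}$, closing the loop via Young's inequality and absorbing a small multiple of $\iint|\mathbf{b}_{xx}|^2\omega^2$ back using Lemma \ref{2.7}. The commutator errors $\iint\mathbf{w}_t\cdot\mathbf{w}_x\omega\omega'$ are handled by writing $\mathbf{w}_t = \rho^{-1}(\mu\mathbf{w}_{xx}+\mathbf{b}_x-\rho u\mathbf{w}_x)$ and again using Young, the $\mu\mathbf{w}_{xx}$ contribution giving $\epsilon\mu\iint|\mathbf{w}_{xx}|^2\omega^2$ to be absorbed. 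Collecting everything and applying Gronwall with the integrable weight $1+\|u_x\|_{L^\infty}$ (integrable in $t$ by Lemma \ref{2.2}, and $\int_0^t\|u_x\|_{L^\infty}^2\le C(\iint u_{xx}^2)^{1/2}$ by \eqref{ux}) yields \eqref{w1}$_1$.

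For \eqref{w1}$_2$, I would test \eqref{e1}$_3$ directly against $\mathbf{w}_t$ (without weight): the inner product gives $\iint\rho|\mathbf{w}_t|^2 + \tfrac{\mu}{2}\frac{d}{dt}\int_\Omega|\mathbf{w}_x|^2\,dx$ balanced against $-\iint\rho u\mathbf{w}_x\cdot\mathbf{w}_t + \iint\mathbf{b}_x\cdot\mathbf{w}_t$ plus a genuine boundary term $\mu\int_0^t\mathbf{w}_t\cdot\mathbf{w}_x|_{x=0}^{x=1}\,ds$, which is controlled using $\|\mathbf{w}^\pm\|_{C^1[0,T]}<\infty$ from \eqref{assumption1} together with the interpolation \eqref{wx2} and Lemma \ref{2.6} (exactly as in the proof of Lemma \ref{2.6}). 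Absorbing $\tfrac14\iint\rho|\mathbf{w}_t|^2$ and estimating $\iint\rho u^2|\mathbf{w}_x|^2$ by $\int_0^t\|u\|_{L^\infty}^2\int_\Omega|\mathbf{w}_x|^2\omega^{-2}\cdot\omega^2$—better, by noting $u^2|\mathbf{w}_x|^2\le\|u_x\|_{L^\infty}^2|\mathbf{w}_x|^2\omega^2$ via \eqref{u3}—reduces the bound to $\int_0^t\|u_x\|_{L^\infty}^2\int_\Omega|\mathbf{w}_x|^2\omega^2\,ds$, which by \eqref{w1}$_1$ already established and \eqref{ux} is $\le C+C\iint u_{xx}^2\,ds$. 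The term $\iint u^2|\mathbf{w}_x|^2$ on the left of \eqref{w1}$_2$ is then obtained for free from the same pointwise bound and \eqref{w1}$_1$.

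The main obstacle is the $\mu\mathbf{w}_{xx}$ dissipation being the only good term on the left while the natural estimates want to spend it: one must be disciplined about \emph{never} dividing by $\mu$, which forces the repeated integration-by-parts tricks (moving derivatives off $\mathbf{w}_{xx}$ onto $u_x$, $\mathbf{b}_x$, or $\omega'$) and the coupling to Lemma \ref{2.7} for the $\mathbf{b}_{xx}$ term. The second subtle point is the circular-looking dependence between the $\mathbf{b}$ and $\mathbf{w}$ weighted $H^2$ estimates, resolved because Lemma \ref{2.7} expresses $\iint|\mathbf{b}_{xx}|^2\omega^2$ in terms of $\iint|\mathbf{w}_x|^2\omega^2$ (one derivative less, with a small coefficient available after Young), so substituting it into the $\mathbf{w}$-estimate is genuinely non-circular and lets Gronwall close on $\int_\Omega|\mathbf{w}_x|^2\omega^2\,dx$ alone.
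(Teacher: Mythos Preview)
Your plan is essentially the paper's own proof: test \eqref{w12} against $\mathbf{w}_{xx}\omega^2$, integrate the transport term by parts so no $\mathbf{w}_{xx}$ survives without a $\mu$, move the derivative in the forcing term onto $\mathbf{b}$ and close via Lemma~\ref{2.7}, handle the $\omega\omega'$ cross term by substituting $\mathbf{w}_t$ from \eqref{w12} and invoking Lemma~\ref{2.6} for the $\mu^2\!\iint|\mathbf{w}_{xx}|^2$ piece, then Gronwall. Two small remarks: (i) the integrability of the Gronwall weight $1+\|u_x\|_{L^\infty}$ comes from Lemma~\ref{2.5} (estimate \eqref{u0}), not Lemma~\ref{2.2}; (ii) for \eqref{w1}$_2$ the paper does not test against $\mathbf{w}_t$ but simply reads $|\mathbf{w}_t|^2\le C(\mu^2\rho^{-2}|\mathbf{w}_{xx}|^2+u^2|\mathbf{w}_x|^2+\rho^{-2}|\mathbf{b}_x|^2)$ pointwise from \eqref{w12} and integrates---your energy-method variant also works but is slightly more involved because of the boundary term.
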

\begin{proof}
  Taking the inner product of \eqref{w12} with $\mathbf{w}_{xx}\omega^2(x)$ and integrating over $Q_t$ give
\begin{equation}\label{0v10}
\begin{split}
   &-\iint_{Q_t} \mathbf{w}_t\cdot \mathbf{w}_{xx}\omega^2 dxdt+\mu\iint_{Q_t}|\mathbf{w}_{xx}|^2\frac{\omega^2}{\rho}dxds\\
    &=\iint_{Q_t} u\mathbf{w}_x\cdot \mathbf{w}_{xx}\omega^2 dxds-\iint_{Q_t}  \mathbf{b}_x\cdot \mathbf{w}_{xx}\frac{\omega^2}{\rho}  dxds.
  \end{split}
\end{equation}
Then \eqref{w12} gives
\begin{equation*}
\begin{split}
   & \iint_{Q_t} \mathbf{w}_t\cdot \mathbf{w}_{xx}\omega^2 dxdt\\[1mm]
   &=-\frac{1}{2}\int_\Omega|\mathbf{w}_x|^2\omega^2   dx
  +\frac{1}{2}\int_\Omega|\mathbf{w}_{0x}|^2\omega^2   dx-2\iint_{Q_t} \mathbf{w}_t\cdot \mathbf{w}_{x}\omega\omega' dxdt\\[1mm]
   &=-\frac{1}{2}\int_\Omega|\mathbf{w}_x|^2\omega^2  dx
   +\frac{1}{2}\int_\Omega|\mathbf{w}_{0x}|^2\omega^2   dx -2\iint_{Q_t} \left(\frac{\mu}{\rho}\mathbf{w}_{xx}-u\mathbf{w}_x
   +\frac{\mathbf{b}_x}{\rho}\right)\cdot \mathbf{w}_{x}\omega\omega'  dxds\\[1mm]
   &\leq C-\frac{1}{2}\int_\Omega|\mathbf{w}_x|^2\omega^2   dx+C\mu^2\iint_{Q_t}|\mathbf{w}_{xx}|^2dxds +C\iint_{Q_t}|\mathbf{w}_x|^2\omega^2   dxds\\[1mm]
   &\quad+C\iint_{Q_t}|\mathbf{b}_{x}|^2dxds+C\iint_{Q_t}|u||\mathbf{w}_x|^2\omega dxds.
  \end{split}
\end{equation*}
From \eqref{u3}, we have
\begin{equation*}
\begin{split}
    \iint_{Q_t}|u||\mathbf{w}_x|^2\omega dxds\leq \int_0^t\|u_x\|_{L^\infty(\Omega)} \int_\Omega|\mathbf{w}_x|^2\omega^2dxds.\\
  \end{split}
\end{equation*}
Thus, from Lemmas \ref{2.2} and \ref{2.6}, we obtain
\begin{equation*}
\begin{split}
    \iint_{Q_t} \mathbf{w}_t\cdot \mathbf{w}_{xx}\omega^2 dxdt  \leq C-\frac{1}{2}\int_\Omega|\mathbf{w}_x|^2\omega^2   dx
   +C\int_0^t\big(1+\|u_x\|_{L^\infty(\Omega)} \big)\int_\Omega|\mathbf{w}_x|^2\omega^2dxds.\\
  \end{split}
\end{equation*}
To estimate the right hand side of \eqref{0v10}, we use  \eqref{u3} to obtain
\begin{equation*}
\begin{split}
  \iint_{Q_t} u\mathbf{w}_x\cdot \mathbf{w}_{xx}\omega^2 dxds
  =&-\frac{1}{2}\iint_{Q_t} |\mathbf{w}_x|^2[u_x\omega^2+2u\omega\omega']
  dxds\\
  \leq&C\int_0^t \|u_x\|_{L^\infty(\Omega)} \int_\Omega|\mathbf{w}_x|^2\omega^2dxds,\\
  \end{split}
\end{equation*}
and
\begin{equation*}
\begin{split}
   -\iint_{Q_t} \mathbf{b}_x\cdot \mathbf{w}_{xx}\frac{\omega^2}{\rho} dxds &=\iint_{Q_t}\Big(\mathbf{w}_x\cdot \mathbf{b}_{xx}  \frac{\omega^2}{\rho}dxds+2 \mathbf{w}_x\cdot \mathbf{b}_{x} \frac{\omega\omega'}{\rho} -  \mathbf{w}_x\cdot \mathbf{b}_{x}\frac{\omega^2\rho_x}{\rho^2}\Big) dxds\\[1mm]
& \leq C\iint_{Q_t} \Big( |\mathbf{b}_{xx}|^2\omega^2 + |\mathbf{w}_x|^2\omega^2 +  |\mathbf{b}_{x}|^2+|\mathbf{b}_x|^2\omega^2 \rho_x^2\Big)dxds\\[1mm]
&\leq  C+C\iint_{Q_t}  |\mathbf{w}_x|^2\omega^2 dxds+C\iint_{Q_t}  |\mathbf{b}_{xx}|^2\omega^2 dxds,
  \end{split}
\end{equation*}
where we  have used the fact that by using Lemmas \ref{2.2} and \ref{2.4}, it holds
\begin{equation*}
\begin{split}
     \iint_{Q_t}  |\mathbf{b}_x|^2\omega^2 \rho_x^2dxds\leq &C\int_0^t  \left\||\mathbf{b}_x|^2\omega^2\right\|_{L^\infty(\Omega)}  ds
      \leq   C\iint_{Q_t}  \left|(|\mathbf{b}_x|^2\omega^2)_x\right| dxds\\
    \leq & C\iint_{Q_t}    |\mathbf{b}_{x}|^2|\omega\omega'|dxds+C \iint_{Q_t}    |\mathbf{b}_x\cdot\mathbf{b}_{xx}|  \omega^2 dxds\\[1mm]
    \leq &C+C\iint_{Q_t}    |\mathbf{b}_{xx}|^2 \omega^2 dxds.
  \end{split}
\end{equation*}
Substituting the above estimates into \eqref{0v10} and using Lemma \ref{2.7}, we have
\begin{equation*}
\begin{split}
     &\int_\Omega|\mathbf{w}_x|^2\omega^2   dx+\mu \iint_{Q_t} |\mathbf{w}_{xx}|^2\omega^2   dxds\\
     & \leq C + C \int_0^t\big(1+\|u_x\|_{L^\infty(\Omega)}\big)\int_\Omega|\mathbf{w}_x|^2\omega^2  dxds+C\left(\iint_{Q_t} u_{xx}^2 dxds\right)^{1/2}.
  \end{split}
\end{equation*}
Thus, the first estimate in the  lemma follows from
the Gronwall inequality and
\eqref{u0}.

Consequently, from  \eqref{u3}, the first estimate of the lemma and \eqref{ux}, we obtain
\begin{equation*}\label{v11}
\begin{split}
    \iint_{Q_T} u^2|\mathbf{w}_x|^2 dxdt&\leq \int_0^T\|u_x\|_{L^\infty(\Omega)}^2\int_\Omega |\mathbf{w}_x|^2\omega^2 dxdt \\ &\leq C\int_0^T\|u_x\|_{L^\infty(\Omega)}^2 dt\left[1+\left(\iint_{Q_T}u_{xx}^2 dxdt\right)^{1/2}\right]\\
    &\leq C+C \iint_{Q_T}u_{xx}^2 dxdt.
  \end{split}
\end{equation*}
Furthermore, by Lemmas \ref{2.2} and \ref{2.6}, we derive from
\eqref{w12} that
\begin{equation*}
\begin{split}
   &\iint_{Q_T} |\mathbf{w}_t|^2dxdt\leq C +C\iint_{Q_T}u^2|\mathbf{w}_x|^2dxdt\leq C+C \iint_{Q_T}u_{xx}^2 dxdt.
  \end{split}
\end{equation*}
And this completes the proof of the lemma.
\end{proof}

\begin{lemma}\label{2.9} Under the assumptions in Theorem \ref{existencethm}, we have
\begin{equation*}\label{b99}
\begin{split}
  &  \int_\Omega|\mathbf{b}_{x}|^2dx+\iint_{Q_t} |\mathbf{b}_t|^2
  dxdt
  \leq C+ C\left(\iint_{Q_t}u_{xx}^2 dxds\right)^{1/2}.  \\
  \end{split}
\end{equation*}
\end{lemma}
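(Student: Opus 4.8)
\textbf{Proof plan for Lemma \ref{2.9}.}
The plan is to test the magnetic field equation \eqref{e1}$_4$ against $\mathbf{b}_t$ and integrate over $Q_t$. Writing \eqref{e1}$_4$ as $\mathbf{b}_t+(u\mathbf{b})_x-\mathbf{w}_x=\nu\mathbf{b}_{xx}$ and taking the inner product with $\mathbf{b}_t$, the diffusion term becomes, after integration by parts in $x$ and using $\mathbf{b}|_{x=0,1}=0$ (so $\mathbf{b}_t|_{x=0,1}=0$), equal to $-\frac{\nu}{2}\frac{d}{dt}\int_\Omega|\mathbf{b}_x|^2dx$. This produces the identity
\begin{equation*}
\iint_{Q_t}|\mathbf{b}_t|^2dxds+\frac{\nu}{2}\int_\Omega|\mathbf{b}_x|^2dx
=\frac{\nu}{2}\int_\Omega|\mathbf{b}_{0x}|^2dx
+\iint_{Q_t}\big(\mathbf{w}_x-(u\mathbf{b})_x\big)\cdot\mathbf{b}_t\,dxds,
\end{equation*}
so everything reduces to controlling the right-hand side by $C+C(\iint_{Q_t}u_{xx}^2\,dxds)^{1/2}$ plus terms absorbable by the left-hand side.

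First I would dispose of the $\mathbf{w}_x$ contribution: by Young's inequality it is bounded by $\frac14\iint_{Q_t}|\mathbf{b}_t|^2dxds+C\iint_{Q_t}|\mathbf{w}_x|^2dxds$, and the last integral is $\leq C$ by Lemma \ref{2.2} (which gives $\iint_{Q_T}\nu|\mathbf{b}_x|^2dxds\leq C$ — wait, we need $\mathbf{w}_x$, not $\mathbf{b}_x$). Actually $\iint_{Q_T}\mu|\mathbf{w}_x|^2dxds\leq C$ only gives a $\mu$-weighted bound, so instead I use Lemma \ref{2.8}: $\iint_{Q_t}|\mathbf{w}_x|^2\omega^2dxds\leq C+C(\iint_{Q_t}u_{xx}^2dxds)^{1/2}$ handles the interior, and near the boundary one must be slightly more careful — but note $\iint_{Q_t}u^2|\mathbf{w}_x|^2dxds\leq C+C\iint_{Q_t}u_{xx}^2dxds$ from Lemma \ref{2.8} combined with $|u(x,t)|\le\|u_x\|_{L^\infty}\omega(x)$ via \eqref{u3} is what is really available, so it is cleaner to keep $\iint_{Q_t}|\mathbf{w}_x|^2\omega^2dxds$ as the controlled quantity and handle the full $\iint|\mathbf{w}_x|^2$ by writing $|\mathbf{w}_x|^2\le\|\mathbf{w}_x\|^2_{L^\infty}$ and using \eqref{wx2} together with Lemma \ref{2.8} and Lemma \ref{2.7}. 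For the $(u\mathbf{b})_x$ term, expand $(u\mathbf{b})_x=u_x\mathbf{b}+u\mathbf{b}_x$; then $\iint_{Q_t}u_x\mathbf{b}\cdot\mathbf{b}_t\leq\frac14\iint|\mathbf{b}_t|^2+C\iint u_x^2|\mathbf{b}|^2\leq\frac14\iint|\mathbf{b}_t|^2+C\int_0^t\|\mathbf{b}\|^2_{L^\infty(\Omega)}\|u_x\|_{L^2(\Omega)}^2ds$; using Lemma \ref{2.2} and $\int_0^T\|\mathbf{b}\|^2_{L^\infty}dt\le C$ together with $\sup_t\|u_x\|_{L^2}$ — but $\sup_t\|u_x\|_{L^2}$ is not yet available. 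Instead bound $\int_0^t\|u_x\|_{L^\infty}^2\|\mathbf{b}\|^2_{L^2}ds\le C\int_0^t\|u_x\|_{L^\infty}^2ds\le C(\iint_{Q_t}u_{xx}^2dxds)^{1/2}$ by Lemma \ref{2.2}, Lemma \ref{2.1} and \eqref{ux}. Similarly $\iint_{Q_t}u\mathbf{b}_x\cdot\mathbf{b}_t\leq\frac14\iint|\mathbf{b}_t|^2+C\iint u^2|\mathbf{b}_x|^2$, and $\iint_{Q_t}u^2|\mathbf{b}_x|^2dxds\le\int_0^t\|u^2\|_{L^\infty}\|\mathbf{b}_x\|^2_{L^2}ds\le C\sup_{[0,t]}\|\mathbf{b}_x\|^2_{L^2}$ since $\int_0^T\|u^2\|_{L^\infty}dt\le C$; this last factor is exactly the quantity on the left-hand side, so it can be absorbed after applying Gronwall's inequality — or more simply, at this stage $\sup\|\mathbf{b}_x\|^2_{L^2}$ is the unknown we are estimating, so we keep it, collect it with a small coefficient, and close.

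Assembling: moving the three $\frac14\iint|\mathbf{b}_t|^2$ terms to the left and the small multiple of $\sup\|\mathbf{b}_x\|^2_{L^2}$ appropriately, I obtain
\begin{equation*}
\frac14\iint_{Q_t}|\mathbf{b}_t|^2dxds+\frac{\nu}{2}\int_\Omega|\mathbf{b}_x|^2dx
\leq C+C\Big(\iint_{Q_t}u_{xx}^2dxds\Big)^{1/2}
+C\int_0^t\|u^2\|_{L^\infty(\Omega)}\int_\Omega|\mathbf{b}_x|^2dx\,ds,
\end{equation*}
and Gronwall's inequality in $t$ (with $\int_0^T\|u^2\|_{L^\infty(\Omega)}dt\le C$) yields the claimed bound. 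The main obstacle is bookkeeping rather than conceptual: one must route every nonlinear term either into $(\iint u_{xx}^2)^{1/2}$ via the already-established $L^\infty$-in-time controls of $u,\mathbf{b}$ and the estimate \eqref{ux} for $\int\|u_x\|^2_{L^\infty}$, or into a Gronwall-type factor $\|\mathbf{b}_x\|^2_{L^2}$ — and in particular to avoid invoking $\sup_t\|u_x\|_{L^2}$, which is not yet available at this point in the argument, relying instead on $\iint_{Q_T}u_x^2\le C$ and the $L^{m_0}$-in-time bound on $\|u_x\|_{L^\infty}$ from Lemma \ref{2.5}.
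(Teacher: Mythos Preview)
Your setup (test \eqref{e1}$_4$ against $\mathbf{b}_t$) and your treatment of the $(u\mathbf{b})_x\cdot\mathbf{b}_t$ term match the paper: split into $u_x\mathbf{b}$ and $u\mathbf{b}_x$, feed $u_x^2|\mathbf{b}|^2$ into \eqref{ux} to get $C(\iint u_{xx}^2)^{1/2}$, and keep $u^2|\mathbf{b}_x|^2$ as a Gronwall factor via $\int_0^T\|u^2\|_{L^\infty}dt\le C$. That part is fine.

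The gap is in the $\mathbf{w}_x\cdot\mathbf{b}_t$ term. Your Young-inequality route requires a uniform-in-$\mu$ bound on $\iint_{Q_t}|\mathbf{w}_x|^2\,dxds$, and none is available: Lemma~\ref{2.2} only gives $\mu\iint|\mathbf{w}_x|^2\le C$, while Lemma~\ref{2.8} controls $\iint|\mathbf{w}_x|^2\omega^2$ and $\iint u^2|\mathbf{w}_x|^2$, both of which degenerate at the boundary. Your fallback of writing $|\mathbf{w}_x|^2\le\|\mathbf{w}_x\|_{L^\infty}^2$ and invoking \eqref{wx2} still lands on $\int_0^t\|\mathbf{w}_x\|_{L^\infty}^2ds$, which by \eqref{wx2} and Lemma~\ref{2.6} is only $O(\mu^{-3/2})$. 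So the argument does not close.

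The paper avoids estimating $\iint|\mathbf{w}_x|^2$ altogether by trading the derivative: integrate $\iint\mathbf{w}_x\cdot\mathbf{b}_t$ by parts in $t$ and then in $x$ (using $\mathbf{b}|_{x=0,1}=0$) to obtain
\[
\iint_{Q_t}\mathbf{w}_x\cdot\mathbf{b}_t\,dxds
=-\int_\Omega\mathbf{w}_{0x}\cdot\mathbf{b}_0\,dx-\int_\Omega\mathbf{w}\cdot\mathbf{b}_x\,dx
+\iint_{Q_t}\mathbf{w}_t\cdot\mathbf{b}_x\,dxds.
\]
The first two terms are harmless (Lemma~\ref{2.1} and Cauchy give $C+\tfrac{\nu}{4}\int|\mathbf{b}_x|^2$), and the last is bounded by $(\iint|\mathbf{b}_x|^2)^{1/2}(\iint|\mathbf{w}_t|^2)^{1/2}$. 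Now $\iint_{Q_T}|\mathbf{b}_x|^2\le C$ by Lemma~\ref{2.2}, and $\iint|\mathbf{w}_t|^2\le C+C\iint u_{xx}^2$ by Lemma~\ref{2.8}, so this term contributes exactly $C+C(\iint u_{xx}^2)^{1/2}$. The remainder of the argument then proceeds by Gronwall as you outlined.
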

\begin{proof}
Taking the inner product of \eqref{e1}$_4$ with $\mathbf{b}_t$ and
integrating over $Q_t$ yield
\begin{equation}\label{b9}
\begin{split}
  &\frac{\nu}{2}\int_\Omega|\mathbf{b}_{x}|^2dx+ \iint_{Q_t} |\mathbf{b}_t|^2
  dxdt
  =\frac{\nu}{2}\int_\Omega|\mathbf{b}_{0x}|^2dx+\iint_{Q_t}  \big[\mathbf{w}_x
  - (u\mathbf{b})_{x}\big]\cdot\mathbf{b}_t dxdt.  \\
  \end{split}
\end{equation}
Note that
\begin{equation*}
\begin{split}
   \iint_{Q_t}   \mathbf{w}_x  \cdot\mathbf{b}_t dxdt&= \int_\Omega\mathbf{w}_x  \cdot\mathbf{b}dx
  -\int_\Omega\mathbf{w}_{0x}  \cdot\mathbf{b}_0dx-  \iint_{Q_t}   (\mathbf{w}_t)_x  \cdot\mathbf{b}  dxdt\\
  &=-\int_\Omega\mathbf{w}_{0x}\cdot\mathbf{b}_0dx -\int_\Omega\mathbf{w}\cdot\mathbf{b}_xdx
  +\iint_{Q_t}   \mathbf{w}_t\cdot\mathbf{b}_x  dxdt.
  \end{split}
\end{equation*}
Thus, from  Lemmas \ref{2.1} and \ref{2.2} and
\eqref{w1}$_2$, we obtain
\begin{equation*}\label{b10}
\begin{split}
   \iint_{Q_t}   \mathbf{w}_x  \cdot\mathbf{b}_t dxdt
  &  \leq C+\frac{\nu}{4}\int_\Omega|\mathbf{b}_x|^2dx+\left(\iint_{Q_t}|\mathbf{b}_x|^2dxdt\right)^{1/2}
\left(\iint_{Q_t}|\mathbf{w}_t|^2dxdt\right)^{1/2}\\
&\leq C +\frac{\nu}{4}\int_\Omega|\mathbf{b}_x|^2dx+C\left(\iint_{Q_t}|\mathbf{w}_t|^2dxdt\right)^{1/2}\\
&\leq C +\frac{\nu}{4}\int_\Omega|\mathbf{b}_x|^2dx+C\left(\iint_{Q_t}u_{xx}^2dxds\right)^{1/2}.
  \end{split}
\end{equation*}
From the Cauchy inequality, Lemma 2.1 and \eqref{ux}, we have
\begin{equation*}\label{b11}
\begin{split}
   &-\iint_{Q_t}  (u\mathbf{b})_{x} \cdot\mathbf{b}_t dxdt\\
   & \leq \frac12  \iint_{Q_t} |\mathbf{b}_t|^2 dxdt
  +\frac12\iint_{Q_t}  |(u\mathbf{b})_{x}|^2 dxds\\
   &\leq   \frac12  \iint_{Q_t} |\mathbf{b}_t|^2 dxdt+C\int_0^t\|u^2\|_{L^\infty(\Omega)} \int_{\Omega}  |\mathbf{b}_x|^2 dxds+C\int_0^t\|u_x\|_{L^\infty(\Omega)}^2\int_{\Omega}  |\mathbf{b}|^2 dxds\\
  &\leq   \frac12  \iint_{Q_t} |\mathbf{b}_t|^2 dxdt+C\int_0^t\|u^2\|_{L^\infty(\Omega)} \int_{\Omega}  |\mathbf{b}_x|^2 dxds+C\left(\iint_{Q_t}u_{xx}^2 dxds\right)^{1/2}.
\end{split}\end{equation*}
Substituting these estimates into \eqref{b9}, the Gronwall inequality yields the proof of the lemma.
\end{proof}

We are now ready to prove the following estimates.

\begin{lemma}\label{2.10}
Let the assumptions in Theorem \ref{existencethm} hold. Then $\|(u,\mathbf{b})\|_{L^\infty(Q_T)}\leq C$, and
\begin{equation}\label{w1112}
\begin{split}
&\sup\limits_{0<t<T}\int_\Omega(\rho_t^2+u_x^2+\theta^2)dx +\iint_{Q_T} \big( u_t^2 +u_{xx}^2+\kappa\theta_x^2\big)dxdt\leq C,\\
& \sup\limits_{0<t<T}\int_\Omega|\mathbf{w}_x|^2\omega^2 dx+\iint_{Q_T}\big(u^2|\mathbf{w}_x|^2+ |\mathbf{w}_{t}|^2 \big)  dxdt \leq C,\\
&\sup\limits_{0<t<T}\int_\Omega|\mathbf{b}_{x}|^2dx+\iint_{Q_T}\big(|\mathbf{b}_t|^2 + |\mathbf{b}_{xx}|^2\omega^2\big) dxdt\leq C.
\end{split}
\end{equation}
 \end{lemma}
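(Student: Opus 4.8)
The plan is to reduce every bound in \eqref{w1112} to a single a~priori estimate on $\iint_{Q_T}\kappa\theta_x^2\,dxdt$ and then to close that estimate by a Gronwall inequality whose weight lies only in $L^1(0,T)$. First I would do the energy estimate for $u$: writing $\eqref{e1}_2$ in the form $\rho u_t+\rho uu_x+p_x+\mathbf{b}\cdot\mathbf{b}_x=\lambda u_{xx}$, multiplying by $u_t$ and integrating over $\Omega$ (the boundary term vanishing since $u_t|_{x=0,1}=0$), I get $\int_\Omega\rho u_t^2\,dx+\frac{\lambda}{2}\frac{d}{dt}\int_\Omega u_x^2\,dx=-\int_\Omega(\rho uu_x+p_x+\mathbf{b}\cdot\mathbf{b}_x)u_t\,dx$. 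The convective term is bounded by $\frac14\int\rho u_t^2+C\int\rho u^2u_x^2$ with $\int\rho u^2u_x^2\le C(\int u_x^2)^{3/2}$ (from $\|u\|_{L^\infty}^2\le C(\int u_x^2)^{1/2}$ and Lemma 2.1), whose time integral is $\le\epsilon\sup_t\int u_x^2+C$ by Lemma 2.2; the magnetic term by $\frac14\int\rho u_t^2+C\int|\mathbf{b}|^2|\mathbf{b}_x|^2$ and Lemma 2.3; and, crucially, $-\int p_xu_t\le\frac14\int\rho u_t^2+C\int p_x^2$ with $\int p_x^2\le C\int\rho_x^2\theta^2+C\int\theta_x^2\le C\|\theta\|_{L^\infty}^2+C\int\kappa\theta_x^2$, using $\sup_t\int\rho_x^2\le C$ (Lemma 2.4), $\int_0^T\|\theta\|_{L^\infty}^2\,dt\le C$ and the pointwise bound $\theta_x^2\le\kappa_1^{-1}C^{q}\kappa\theta_x^2$ coming at once from $\theta\ge C^{-1}$ and $\kappa\ge\kappa_1\theta^q$. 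After absorbing $\epsilon\sup_t\int u_x^2$ this gives, for every $t\in(0,T]$,
\begin{equation*}
\sup_{0<s<t}\int_\Omega u_x^2\,dx+\iint_{Q_t}\rho u_t^2\,dxds\ \le\ C+C\iint_{Q_t}\kappa\theta_x^2\,dxds.
\tag{$\star$}
\end{equation*}

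Next I would reduce the remaining quantities to $\Phi(t):=\iint_{Q_t}\kappa\theta_x^2\,dxds$ via the equations and Lemmas 2.5--2.9. From $\eqref{f2}$ one has $\iint_{Q_t}u_{xx}^2\le C\iint_{Q_t}u_t^2+C\iint_{Q_t}f^2$, and $\iint_{Q_t}f^2\le C\iint_{Q_t}(u^2u_x^2+\theta_x^2+\rho_x^2\theta^2+|\mathbf{b}|^2|\mathbf{b}_x|^2)$, which by the estimates just used, Lemma 2.3 and $(\star)$ is $\le C+C\Phi(t)$; hence $\iint_{Q_t}u_{xx}^2\le C+C\Phi(t)$. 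Substituting this into Lemmas 2.7--2.9 (and using $\iint_{Q_t}|\mathbf{w}_x|^2\omega^2\le T\sup_{[0,t]}\int|\mathbf{w}_x|^2\omega^2$ to feed Lemma 2.8 into Lemma 2.7), all of $\int_\Omega|\mathbf{b}_x|^2\,dx$, $\iint_{Q_t}(|\mathbf{b}_t|^2+|\mathbf{b}_{xx}|^2\omega^2)$, $\int_\Omega|\mathbf{w}_x|^2\omega^2\,dx$ and $\iint_{Q_t}(|\mathbf{w}_t|^2+u^2|\mathbf{w}_x|^2)$ become $\le C+C\Phi(t)$.

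Then comes the energy estimate for $\theta$ and the closing. Multiplying $\eqref{e1}_5$ by $\theta$ and integrating (the boundary term $[\kappa\theta_x\theta]$ vanishing since $\theta_x|_{x=0,1}=0$), the two convective contributions cancel after invoking $\eqref{e1}_1$, leaving $\frac{d}{dt}\big(\frac12\int\rho\theta^2\big)+\int\kappa\theta_x^2=-\int pu_x\theta+\int(\lambda u_x^2+\mu|\mathbf{w}_x|^2+\nu|\mathbf{b}_x|^2)\theta$. I would bound $-\int pu_x\theta\le\frac{\lambda}{2}\int u_x^2\theta+C\int\theta^3$ and, by Lemma 2.6, $\mu\int|\mathbf{w}_x|^2\theta\le C\|\theta\|_{L^\infty}$, while deliberately keeping $\int\lambda u_x^2\theta$ and $\int\nu|\mathbf{b}_x|^2\theta$ in the form $\|\theta\|_{L^\infty}(t)\big(\int u_x^2+\int|\mathbf{b}_x|^2\big)$. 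Integrating in time, inserting the pointwise-in-$t$ bounds $\int u_x^2\le C+C\Phi$ and $\int|\mathbf{b}_x|^2\le C+C\Phi$ from $(\star)$ and the reduction step, and using $\int_0^T(\|\theta\|_{L^\infty}+\|\theta\|_{L^\infty}^2)\,dt\le C$ (Lemma 2.2), I arrive at $\Phi(t)+\frac12\int\rho\theta^2\,dx\le C+C\int_0^t\|\theta\|_{L^\infty}(s)\Phi(s)\,ds$; since $\|\theta\|_{L^\infty}\in L^1(0,T)$, Gronwall gives $\iint_{Q_T}\kappa\theta_x^2\le C$ and $\sup_t\int\theta^2\le C$. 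Feeding this back through $(\star)$, the reduction step and Lemmas 2.7--2.9 gives every bound in \eqref{w1112} except $\sup_t\int\rho_t^2$, which follows from $\rho_t=-\rho_xu-\rho u_x$, $\sup_t\int u_x^2\le C$, $\sup_t\int\rho_x^2\le C$ and $\|u\|_{L^\infty}^2\le\int u_x^2$; finally $\|u\|_{L^\infty(Q_T)}\le C$ and $\|\mathbf{b}\|_{L^\infty(Q_T)}\le C$ follow from $u(0,t)=\mathbf{b}(0,t)=0$ and the uniform $L^2$-bounds on $u_x,\mathbf{b}_x$.

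The hard part is the coupling between the $u$- and $\theta$-estimates. The $\theta$-estimate necessarily produces $\int\lambda u_x^2\theta$ on its right-hand side, while $(\star)$ controls $\sup_t\int u_x^2$ only by $C\iint\kappa\theta_x^2$ with a constant that is \emph{not} small; estimating $\sup_t\int u_x^2$ first and substituting would leave $\iint\kappa\theta_x^2\le C+C'\iint\kappa\theta_x^2$ with $C'=O(1)$, which does not close. The point is to refuse to pull the supremum out and instead keep $\int u_x^2\theta$ as $\|\theta\|_{L^\infty}(t)\int u_x^2$, so that after substituting $(\star)$ it becomes a Gronwall term $\int_0^t\|\theta\|_{L^\infty}(s)\Phi(s)\,ds$ with an $L^1$ weight; the identical trick absorbs $\nu|\mathbf{b}_x|^2\theta$ through the reduction step, and the $\mu$ in Lemma 2.6 makes $\mu|\mathbf{w}_x|^2\theta$ harmless. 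A secondary subtlety is that the convective term in the $u$-estimate must be treated with the sharp power $(\int u_x^2)^{3/2}$ rather than $(\int u_x^2)^2$, so that its time integral can be absorbed into $\epsilon\sup_t\int u_x^2$.
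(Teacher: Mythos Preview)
Your proof is correct and closes the coupled $u$--$\theta$ estimate by a genuinely different mechanism than the paper does.

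The paper obtains $\int u_x^2+\iint(\rho u_t^2+\lambda^2\rho^{-1}u_{xx}^2)$ simultaneously (by squaring the rewritten form $\sqrt\rho\,u_t-\lambda\rho^{-1/2}u_{xx}=\cdots$), bounds the $\theta$--energy by treating $-\int pu_x\theta$ as a Gronwall term with weight $\|u_x\|_{L^\infty}\in L^1$ (this is where Lemma~\ref{2.5} enters explicitly) and $\int\lambda u_x^2\theta$ via $\int_0^t\|u_x\|_{L^\infty}^2\,ds\le C(\iint u_{xx}^2)^{1/2}$, so that after a first Gronwall one has $\int\theta^2+\iint\kappa\theta_x^2\le C+C(\iint u_{xx}^2)^{1/2}$; this \emph{square root} is then substituted back into the $u$--estimate and absorbed by Young's inequality, and a second Gronwall (weight $\|u\|_{L^\infty}^2\in L^1$) finishes the argument. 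Your route instead first reduces everything---$\sup\int u_x^2$, $\iint u_{xx}^2$, $\int|\mathbf b_x|^2$, $\int|\mathbf w_x|^2\omega^2$---to $C+C\Phi(t)$ with $\Phi(t)=\iint_{Q_t}\kappa\theta_x^2$, then handles $\int(\lambda u_x^2+\nu|\mathbf b_x|^2)\theta$ as $\|\theta\|_{L^\infty}(s)\cdot(C+C\Phi(s))$ and $-\int pu_x\theta$ via the Young split $\tfrac\lambda2\int u_x^2\theta+C\int\theta^3$, arriving at a single Gronwall inequality $\Phi(t)\le C+C\int_0^t\|\theta\|_{L^\infty}\Phi\,ds$ with weight $\|\theta\|_{L^\infty}\in L^1$. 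What the paper's approach buys is that the square-root structure of Lemmas~\ref{2.7}--\ref{2.9} is used essentially and the two Gronwall steps are decoupled; what your approach buys is robustness---it would close even if Lemmas~\ref{2.7}--\ref{2.9} gave only linear (rather than square-root) dependence on $\iint u_{xx}^2$---and it makes transparent precisely the obstruction you name in the last paragraph, namely that pulling $\sup_t\int u_x^2$ out of the $\theta$--estimate yields an $O(1)$ constant that cannot be absorbed.
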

 \begin{proof}
Rewrite \eqref{e1}$_2$ as $\sqrt{\rho}u_t-\frac{\lambda}{\sqrt{\rho}}u_{xx}=-\sqrt{\rho}uu_x-
\gamma\sqrt{\rho}\theta_x-\frac{\gamma}{\sqrt{\rho}}\rho_x\theta-\frac{1}{\sqrt{\rho}}\mathbf{b}\cdot\mathbf{b}_x
$
to
%Using Cauchy-Schwarz's inequality,
 obtain
\begin{equation}\label{0u1}
\begin{split}
&  \frac{\lambda}2\int_\Omega u_x^2dx+\iint_{Q_t}\big(\rho u_t^2+\lambda^2\rho^{-1}u_{xx}^2\big)dxdt\\
&\leq \frac{\lambda}2\int_\Omega u_{0x}^2dx+C\iint_{Q_t} \big(u^2u_x^2+  \theta_x^2+\rho_x^2 \theta^2 + |\mathbf{b}\cdot\mathbf{b}_x|^2\big)dxds\\
&\leq C+C\int_0^t\|u^2\|_{L^\infty(\Omega)}\int_\Omega u_x^2dxds+C\iint_{Q_t} \theta_x^2dxds+C\int_0^t\|\theta^2\|_{L^\infty(\Omega)}\int_{\Omega} \rho_x^2  dxds\\
&\leq C+C\int_0^t\|u^2\|_{L^\infty(\Omega)}\int_\Omega u_x^2dxds+C\iint_{Q_t} \big(\theta_x^2+ \theta^2 \big) dxds,
\end{split}
\end{equation}
where we have used  \eqref{rho} and $\int_0^t\|\theta^2\|_{L^\infty(\Omega)}ds\leq C \iint_{Q_t} (\theta^2+\theta_x^2)dxds$.

Then multiplying \eqref{e1}$_5$   by $ \theta $ and
integrating over $Q_t$ give
\begin{equation}\label{theta0}
\begin{split}
    \frac12\int_\Omega \rho\theta^2dx+ \iint_{Q_t}  \kappa \theta_x^2dxds =&\frac12\int_\Omega \rho_0\theta_0^2dx -\iint_{Q_t}pu_x\theta dxds\\
   &+\iint_{Q_t}  \theta \big(\lambda u_x^2+\mu|\mathbf{w}_x|^2dx+|\mathbf{b}_x|^2\big)   dxds.
  \end{split}
\end{equation}
Note that
\begin{equation*}
\begin{split}
     -\iint_{Q_t}pu_x\theta dxds=-\gamma\iint_{Q_t}  \rho\theta^2 u_xdxds
 &\leq C \int_0^t\|u_x\|_{L^\infty(\Omega)}\int_\Omega\rho\theta^2dxds.
  \end{split}
\end{equation*}
On the other hand, by using Lemmas \ref{2.2}, \ref{2.6} and \ref{2.9} and \eqref{ux}, we obtain
\begin{equation*}
\begin{split}
     &\iint_{Q_t}  \theta \big(\lambda u_x^2+\mu|\mathbf{w}_x|^2dx+|\mathbf{b}_x|^2\big)   dxds\\
    &\leq  C\int_0^t\|u_x\|^2_{L^\infty(\Omega)}\int_\Omega\theta dx+C\int_0^t\|\theta\|_{L^\infty(\Omega)} \left
    \{\int_{\Omega}\big(\mu|\mathbf{w}_x|^2dx+|\mathbf{b}_x|^2\big)dx\right\}ds \\
    & \leq  C   +C\left(\iint_{Q_t}u_{xx}^2 dxds\right)^{1/2}.
  \end{split}
\end{equation*}
Plugging these estimates into \eqref{theta0}, the Gronwall inequality implies
\begin{equation}\label{theta33}
\begin{aligned}
     \int_\Omega  \theta^2dx+ \iint_{Q_t}  \kappa \theta_x^2dxds \leq   C+C\left(\iint_{Q_t}u_{xx}^2 dxds\right)^{1/2}.
  \end{aligned}
\end{equation}
Then \eqref{0u1} gives
\begin{equation*}\label{0u2}
\begin{split}
&  \int_\Omega u_x^2dx+\iint_{Q_t} \big(u_t^2+ u_{xx}^2\big)dxdt\\
&\leq  C +C\int_0^t \|u^2\|_{L^\infty(\Omega)}
\int_{\Omega}u_x^2  dxds +C\left(\iint_{Q_t}u_{xx}^2 dxds\right)^{1/2}\\
 &\leq  C +C\int_0^t \|u^2\|_{L^\infty(\Omega)}
 \int_{\Omega}u_x^2  dxds+\frac12\iint_{Q_t}u_{xx}^2 dxds.\\
\end{split}
\end{equation*}
Thus, the Gronwall inequality  yields
\begin{equation*}
\begin{split}
  \int_\Omega u_x^2dx+\iint_{Q_t} \big(u_t^2 + u_{xx}^2\big)dxdt   \leq  C.
\end{split}
\end{equation*}
Consequently,  \eqref{w1112} follows  from \eqref{theta33} and Lemmas 2.7-2.9.
  And the  proof of the lemma is completed.
\end{proof}

 \begin{lemma}\label{2.11}
  Under the assumptions in Theorem \ref{existencethm}, we have
\begin{equation*}\begin{split}
 \sup\limits_{0<t<T}\int_\Omega |\mathbf{w}_x | dx \leq C.
\end{split}\end{equation*}
In particular, $\|\mathbf{w}\|_{L^\infty(Q_T)}\leq C$.
\end{lemma}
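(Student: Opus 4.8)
The plan is to control $\|\mathbf{w}_x\|_{L^\infty(0,T;L^1(\Omega))}$ by a standard argument: differentiate the structure of equation \eqref{e1}$_3$ in a form suited to an $L^1$-type estimate on $\mathbf{w}_x$, namely by testing against the "signed" direction of $\mathbf{w}_x$. More precisely, I would rewrite \eqref{e1}$_3$ as in \eqref{w12}, i.e. $-\mathbf{w}_t+\tfrac{\mu}{\rho}\mathbf{w}_{xx}=u\mathbf{w}_x-\tfrac{1}{\rho}\mathbf{b}_x$, differentiate once in $x$ to get an equation for $\mathbf{v}:=\mathbf{w}_x$, and then multiply by $\mathbf{v}/|\mathbf{v}|$ (or work with a smooth approximation $\mathbf{v}/\sqrt{|\mathbf{v}|^2+\varepsilon}$ and let $\varepsilon\to0$) and integrate over $\Omega$. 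The parabolic term $\mu(\tfrac{1}{\rho}\mathbf{v}_x)_x\cdot \tfrac{\mathbf{v}}{|\mathbf{v}|}$ integrates to something of the right sign plus boundary terms, so it drops out favorably; the transport term $u\mathbf{v}_x$ produces, after integration by parts, a contribution bounded by $\|u_x\|_{L^\infty}\int_\Omega|\mathbf{v}|\,dx$ together with boundary terms; and the source term $-(\tfrac{1}{\rho}\mathbf{b}_x)_x$ is controlled in $L^1(\Omega)$ using the bounds already available from Lemmas \ref{2.2}, \ref{2.4}, \ref{2.10}, in particular $\|\mathbf{b}_x\|_{L^\infty(0,T;L^2(\Omega))}\le C$, $\|\rho_x\|_{L^\infty(0,T;L^2(\Omega))}\le C$, and $\|\sqrt\omega\,\mathbf{b}_{xx}\|_{L^2(Q_T)}\le C$ — though near the boundary the weight $\omega$ degenerates, which is the delicate point (see below). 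Collecting terms yields a differential inequality of the form $\tfrac{d}{dt}\int_\Omega|\mathbf{w}_x|\,dx\le C(1+\|u_x\|_{L^\infty(\Omega)})\int_\Omega|\mathbf{w}_x|\,dx+g(t)$ with $g\in L^1(0,T)$ and $\int_0^T\|u_x\|_{L^\infty(\Omega)}\,dt\le C$ by \eqref{u0}, so Gronwall's inequality closes the estimate.

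The second assertion, $\|\mathbf{w}\|_{L^\infty(Q_T)}\le C$, then follows immediately: the boundary data $\mathbf{w}^\pm$ are uniformly bounded by \eqref{assumption1}, and for any $x\in[0,1]$, $|\mathbf{w}(x,t)|\le|\mathbf{w}^-(t)|+\int_0^1|\mathbf{w}_x(y,t)|\,dy\le C$.

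Handling the boundary terms and the degeneracy near $x=0,1$ is the main obstacle. The weighted estimates of Lemmas \ref{2.7}--\ref{2.10} give $\mathbf{w}_x$ and $\mathbf{b}_{xx}$ only with the weight $\omega$, so the source term $(\tfrac{1}{\rho}\mathbf{b}_x)_x$ is only known to be in $L^2$ of $\omega\,dx\,dt$, not in $L^1(Q_T)$ outright, and the $x$-differentiated $\mathbf{w}$-equation has no good sign structure on its own near the endpoints where $\mathbf{w}_x$ need not vanish. The way around this is to avoid differentiating globally: instead I would integrate \eqref{e1}$_3$ in $x$ from a fixed interior point (say $x=1/2$) to $x$, obtaining an expression for $\mu\mathbf{w}_x(x,t)-\mu\mathbf{w}_x(1/2,t)$ in terms of $\rho u\mathbf{w}-\mathbf{b}$ and $\partial_t\int_{1/2}^x\rho\mathbf{w}\,dy$; combined with the already-established $L^2$-in-time control of $\mathbf{w}_x(1/2,t)$ coming from $\|\sqrt\omega\,\mathbf{w}_x\|_{L^\infty(0,T;L^2)}$ (the weight is bounded below on any interior set), one recovers a pointwise-in-$x$ bound that, after an $x$-integration, gives $\int_\Omega|\mathbf{w}_x|\,dx\le C$. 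Alternatively, since $\|\sqrt{\omega}\,\mathbf{w}_x\|_{L^\infty(0,T;L^2(\Omega))}\le C$ already controls $\mathbf{w}_x$ in $L^1$ away from the boundary via Cauchy–Schwarz (because $\int_\delta^{1-\delta}|\mathbf{w}_x|\,dx\le \delta^{-1/2}\|\sqrt\omega\,\mathbf{w}_x\|_{L^2}$), it suffices to control $\int_0^\delta|\mathbf{w}_x|\,dx+\int_{1-\delta}^1|\mathbf{w}_x|\,dx$ near each endpoint, which is exactly where the boundary conditions $\mathbf{w}(0,t)=\mathbf{w}^-(t)$, $\mathbf{w}(1,t)=\mathbf{w}^+(t)$ and a local energy argument on \eqref{w12} can be brought to bear. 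I expect the cleanest route is the integrated-equation approach, as it sidesteps the sign issue entirely and uses only bounds already in hand.
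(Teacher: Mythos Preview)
Your overall strategy --- differentiate \eqref{w12} in $x$, test the resulting equation for $\mathbf{z}=\mathbf{w}_x$ against $\nabla\Phi_\epsilon(\mathbf{z})$ with $\Phi_\epsilon(\xi)=\sqrt{\epsilon^2+|\xi|^2}$, then apply Gronwall via \eqref{u0} --- is exactly what the paper does, and you correctly identify the two genuine obstacles: the source term $(\rho^{-1}\mathbf{b}_x)_x$ involves $\mathbf{b}_{xx}$, for which only weighted bounds are available at this stage, and the boundary contribution from the parabolic term does not vanish.

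However, your proposed resolutions do not close. The ``integrated-equation approach'' integrates \eqref{e1}$_3$ and yields an expression for $\mu\mathbf{w}_x(x,t)$, not for $\mathbf{w}_x(x,t)$; dividing by $\mu$ destroys any hope of a $\mu$-uniform bound, and the term $\partial_t\int_{1/2}^x\rho\mathbf{w}\,dy$ is not controlled pointwise in $t$ either. Your second alternative invokes $\|\sqrt{\omega}\,\mathbf{w}_x\|_{L^\infty(0,T;L^2)}\le C$, but that is Lemma~\ref{2.13}, proved \emph{after} the present lemma (through Lemma~\ref{2.12}, which itself uses Lemma~\ref{2.11}); at this point only the weaker $\omega^2$-weighted bound from Lemma~\ref{2.10} is available. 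The promised ``local energy argument near the boundary'' is not specified.

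The paper resolves both issues without any splitting or localization. For the source term, substitute $\nu\mathbf{b}_{xx}=\mathbf{b}_t+(u\mathbf{b})_x-\mathbf{z}$ from \eqref{e1}$_4$ directly into $\iint\rho^{-1}\mathbf{b}_{xx}\cdot\nabla\Phi_\epsilon(\mathbf{z})\,dxds$; the resulting term $-\iint(\nu\rho)^{-1}\mathbf{z}\cdot\nabla\Phi_\epsilon(\mathbf{z})\,dxds$ is $\le 0$ by convexity of $\Phi_\epsilon$, and the remaining pieces $\mathbf{b}_t$, $(u\mathbf{b})_x$, $\rho_x\mathbf{b}_x$ are in $L^1(Q_T)$ by Lemmas~\ref{2.4} and~\ref{2.10}. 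No unweighted $\mathbf{b}_{xx}$-control is needed. For the boundary term $\mu\int_0^t\rho^{-1}\mathbf{z}_x\cdot\nabla\Phi_\epsilon(\mathbf{z})\big|_{x=0}^{x=1}\,ds$, evaluate \eqref{w12} at $x=a\in\{0,1\}$: since $u(a,t)=0$ and $\mathbf{w}_t(a,t)=(\mathbf{w}^\pm)'(t)$ is bounded by assumption \eqref{assumption1}, one gets $\frac{\mu}{\rho}\mathbf{z}_x(a,t)=\mathbf{w}_t(a,t)-\rho^{-1}\mathbf{b}_x(a,t)$, and $\mathbf{b}_x(a,t)$ is then bounded in $L^2(0,T)$ by integrating \eqref{e1}$_4$ in $x$ and using $\|\mathbf{b}_t\|_{L^2(Q_T)}\le C$ from Lemma~\ref{2.10}.
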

 \begin{proof}
  Denote $\mathbf{z}=\mathbf{w}_x$. Differentiating \eqref{w12}  in $x$ gives
\begin{equation}\label{zx}
\begin{split}
\mathbf{z}_t=\left(\frac{\mu}{\rho}\mathbf{z}_x\right)_x-(u\mathbf{z})_x+\left(\frac{\mathbf{b}_x}{\rho}\right)_x.
\end{split}\end{equation}
Denote $\Phi_\epsilon(\cdot):\mathbb{R}^2\rightarrow \mathbb{R}^+ $ for $\epsilon \in (0, 1)$  by
$
\Phi_\epsilon(\xi)=\sqrt{\epsilon^2+|\xi|^2}, \forall\xi\in\mathbb{R}^2.
$
Observe that $\Phi_\epsilon$ has the following properties
 \begin{equation}\label{phii}
\left\{\begin{split}
  &|\xi|\leq |\Phi_\epsilon(\xi)|\leq |\xi|+\epsilon,\quad|\nabla_\xi\Phi_\epsilon(\xi)|\leq 1,\quad\lim\limits_{\epsilon\rightarrow 0^+}\Phi_\epsilon(\xi)=|\xi|,\quad\forall\xi\in\mathbb{R}^2,\\
   &   0\leq \xi\cdot\nabla_\xi\Phi_\epsilon(\xi)\leq \Phi_\epsilon(\xi),\quad \eta D_\xi^2\Phi_\epsilon(\xi)\eta^{\top}\geq 0,\quad \forall\xi, \eta\in\mathbb{R}^2,\\
\end{split}
\right.
\end{equation}
where $\xi^{\top}$ stands for the transpose of the vector
$\xi=(\xi_1,\xi_2) \in \mathbb{R}^2$, and $D_\xi^2 g$ is the
Hessian matrix of the function $g :\mathbb{R}^2\rightarrow\mathbb{R}$.

Taking the inner product of \eqref{zx} with $\nabla_\xi\Phi_\epsilon(\mathbf{z})$ and integrating over $Q_t$, we have
\begin{equation}\label{wx1}
\begin{split}
&\int_\Omega \Phi_\epsilon(\mathbf{z})dx-\int_\Omega \Phi_\epsilon(\mathbf{w}_{0x})dx\\
&=-\mu\iint_{Q_t}\frac{1}{\rho} \mathbf{z}_x D_\xi^2\Phi_\epsilon(\mathbf{z})(\mathbf{z}_x)^{\top} dxds
-\iint_{Q_t}(u\mathbf{z})_x\cdot\nabla_\xi\Phi_\epsilon(\mathbf{z})  dxds\\
&\quad+\iint_{Q_t}\Big(\frac{\mathbf{b}_x}{\rho}\Big)_x\cdot\nabla_\xi\Phi_\epsilon(\mathbf{z})  dxds
+\mu\int_0^t\frac{\mathbf{z}_x\cdot\nabla_\xi\Phi_\epsilon(\mathbf{z}) }{\rho}\Big|_{x=0}^{x=1}ds =:\sum_{j=1}^4E_j.
\end{split}
\end{equation}
From \eqref{phii},  it follows that
$$
E_1\leq 0,
$$
and
\begin{equation*}
\begin{split}
E_2=& -\iint_{Q_t}\big(u\mathbf{z}_x+u_x \mathbf{z}\big)\cdot\nabla_\xi\Phi_\epsilon(\mathbf{z})  dxds\\
=& \iint_{Q_t}\big(u_x\Phi_\epsilon(\mathbf{z})-u_x \mathbf{z}\cdot\nabla_\xi\Phi_\epsilon(\mathbf{z}) \big) dxds
\leq  C\int_0^t\|u_x\|_{L^\infty(\Omega)} \int_\Omega\Phi_\epsilon(\mathbf{z})dxds.
\end{split}
\end{equation*}
For $E_3$, using the equation \eqref{e1}$_4$ yields
\begin{equation*}
\begin{split}
 E_3 =&\iint_{Q_t}\frac{\mathbf{b}_{xx}\cdot\nabla_\xi\Phi_\epsilon(\mathbf{z})}{\rho}dxds
 -\iint_{Q_t}\frac{\mathbf{b}_{x}\cdot\nabla_\xi\Phi_\epsilon(\mathbf{z})}{\rho^2}\rho_xdxds\\[1mm]
 =&\frac{1}{\nu}\iint_{Q_t} \frac{\big[\mathbf{b}_{t}+(u\mathbf{b})_x-\mathbf{z}\big]\cdot\nabla_\xi\Phi_\epsilon(\mathbf{z})}{\rho} dxds -\iint_{Q_t}\frac{\mathbf{b}_{x}\cdot\nabla_\xi\Phi_\epsilon(\mathbf{z}) }{\rho^2}\rho_xdxds\\[1mm]
   \leq&C \iint_{Q_t}  \big[|\mathbf{b}_t|+|(u\mathbf{b})_x|+|\rho_x||\mathbf{b}_x|\big]dxds
 \leq C,
\end{split}
\end{equation*}
where we have used \eqref{phii}  and  Lemmas \ref{2.4} and \ref{2.10}.

It remains to estimate $E_4$. From \eqref{w12}, we have
\begin{equation}\label{wt}
\begin{split}
  \Big|\frac{\mu}{\rho(a,t)}\mathbf{z}_x(a,t)\Big|
  =\Big|\mathbf{w}_t(a,t)-\frac{\mathbf{b}_x(a,t)}{\rho(a,t)}\Big|
  \leq C+C |\mathbf{b}_x(a,t)|,\quad \hbox{\rm where}~a=0~\hbox{\rm
  or}~ 1.
\end{split}
\end{equation}
On the other hand, we can first integrate \eqref{e1}$_4$ from $a$ to $y
\in [0, 1]$ in $x$  and then integrate the resulting equation over
$(0, 1)$ in $y$ to obtain
 \begin{equation*}\begin{split}
\mathbf{b}_x(a,t)=&-\frac{1}{\nu}\left\{\int_0^1\hspace{-2mm}\int_a^y
\mathbf{b}_t(x,t)
dxdy+\int_0^1(u\mathbf{b}-\mathbf{w})(y,t)dy+\mathbf{w}(a,t)\right\}.
\end{split}\end{equation*}
Thus, from Lemmas \ref{2.1} and \ref{2.10}, we have that $
\int_0^T|\mathbf{b}_x(a,t)|^2dt  \leq C.
$
Then, one can derive from \eqref{wt} that $
  \int_0^T\Big|\frac{\mu}{\rho(a,t)}\mathbf{z}_x(a,t)\Big|dt \leq C+C\int_0^T|\mathbf{b}_x(a,t)| dt\leq C.
$
Hence
\begin{equation*}
\begin{split}
  E_4\leq C\int_0^T\left\{\left|\frac{\mu}{\rho(1,t)}\mathbf{z}_x(1,t)\right|
  +\left|\frac{\mu}{\rho(0,t)}\mathbf{z}_x(0,t)\right|\right\}dt  \leq C.
\end{split}
\end{equation*}
Substituting the above estimates in \eqref{wx1} and using the
Gronwall inequality, we obtain
$$\int_\Omega \Phi_\epsilon(\mathbf{z})dx\leq C+\int_\Omega \Phi_\epsilon(\mathbf{w}_{0x})dx.
$$
Taking the limit  $\epsilon\rightarrow 0$ yields  $\int_\Omega |\mathbf{w}_x|dx\leq C.
$
This and $\int_\Omega |\mathbf{w}|^2dx\leq C$  imply that
$|\mathbf{w}|\leq C$, and it completes
the proof of the lemma.
 \end{proof}

\begin{lemma}\label{2.12}Under the assumptions in Theorem \ref{existencethm}, we have
\begin{equation*}\begin{split}
 \iint_{Q_T}\big(|\mathbf{b}_x |^4+|\mathbf{b}_x||\mathbf{b}_{xx}|\big)dxdt+\int_0^T\|\mathbf{b}_x\|_{L^\infty(\Omega)}^2dt \leq C.
\end{split}\end{equation*}
\end{lemma}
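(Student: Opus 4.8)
The plan is to derive the three estimates by testing the magnetic field equation \eqref{e1}$_4$ against successively higher-order quantities, exploiting the uniform bounds already established in Lemmas \ref{2.1}--\ref{2.11}, in particular the $L^\infty(Q_T)$ bound on $\mathbf{b}$ from Lemma \ref{2.3}, the bound $\sup_t\int_\Omega|\mathbf{b}_x|^2dx\le C$ and $\iint_{Q_T}|\mathbf{b}_t|^2\le C$ from Lemma \ref{2.10}, and the bound $\sup_t\int_\Omega|\mathbf{w}_x|\,dx\le C$ together with $\|\mathbf{w}\|_{L^\infty(Q_T)}\le C$ from Lemma \ref{2.11}. First I would obtain an interior $L^2$ bound on $\mathbf{b}_{xx}$: weighting by $\omega^2$ as in Lemma \ref{2.7} already gave $\iint_{Q_T}|\mathbf{b}_{xx}|^2\omega^2\le C$, and combining this with \eqref{e1}$_4$ written as $\nu\mathbf{b}_{xx}=\mathbf{b}_t+(u\mathbf{b})_x-\mathbf{w}_x$ one controls $\mathbf{b}_{xx}$ away from the boundary; the near-boundary contribution to $\iint|\mathbf{b}_x||\mathbf{b}_{xx}|$ will be handled directly through the boundary-value estimate $\int_0^T|\mathbf{b}_x(a,t)|^2dt\le C$ derived in the proof of Lemma \ref{2.11}.

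The core step is to bound $\sup_t\int_\Omega|\mathbf{b}_x|^2\,dx$ has already been done, so I would instead go for $\int_0^T\|\mathbf{b}_x\|_{L^\infty(\Omega)}^2\,dt\le C$ first, using the Gagliardo--Nirenberg/Sobolev interpolation
\[
\|\mathbf{b}_x\|_{L^\infty(\Omega)}^2\le C\int_\Omega|\mathbf{b}_x|^2dx+C\Big(\int_\Omega|\mathbf{b}_x|^2dx\Big)^{1/2}\Big(\int_\Omega|\mathbf{b}_{xx}|^2dx\Big)^{1/2},
\]
together with $\sup_t\int_\Omega|\mathbf{b}_x|^2dx\le C$. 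This reduces everything to a genuine (unweighted) bound $\iint_{Q_T}|\mathbf{b}_{xx}|^2\,dxdt\le C$. To get that, rewrite \eqref{e1}$_4$ as the parabolic equation $\mathbf{b}_t-\nu\mathbf{b}_{xx}=\mathbf{w}_x-(u\mathbf{b})_x$ and test with $-\mathbf{b}_{xx}$ (equivalently, differentiate in $x$ and test with $\mathbf{b}_x$, as in Lemma \ref{2.9}); the key new input is that $\iint_{Q_T}|\mathbf{w}_x|^2\,dxdt\le C$ from Lemma \ref{2.2}$_3$ (recall $\mu$ is now fixed, but for the $\mu$-uniform version one only has $\mu\iint|\mathbf{w}_x|^2\le C$, so here I would instead use the weighted bound $\sup_t\int_\Omega|\mathbf{w}_x|^2\omega^2dx\le C$ from Lemma \ref{2.10} plus the boundary estimate to split $\mathbf{w}_x$ into interior and boundary parts, matching the structure of Lemma \ref{2.7}). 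With $\iint_{Q_T}|\mathbf{b}_{xx}|^2\le C$ in hand, the boundary term arising in the integration by parts is controlled via $\int_0^T|\mathbf{b}_x(a,t)|^2dt\le C$, and Young's inequality closes the estimate.

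Finally, for the quartic bound $\iint_{Q_T}|\mathbf{b}_x|^4\,dxdt\le C$, I would interpolate: $\int_\Omega|\mathbf{b}_x|^4dx\le \|\mathbf{b}_x\|_{L^\infty(\Omega)}^2\int_\Omega|\mathbf{b}_x|^2dx\le C\|\mathbf{b}_x\|_{L^\infty(\Omega)}^2$, then integrate in $t$ and invoke the just-proved $\int_0^T\|\mathbf{b}_x\|_{L^\infty(\Omega)}^2dt\le C$; similarly $\iint_{Q_T}|\mathbf{b}_x||\mathbf{b}_{xx}|\le\big(\iint|\mathbf{b}_x|^2\big)^{1/2}\big(\iint|\mathbf{b}_{xx}|^2\big)^{1/2}\le C$. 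I expect the main obstacle to be the unweighted $L^2(Q_T)$ estimate on $\mathbf{b}_{xx}$: unlike the $\omega^2$-weighted estimate of Lemma \ref{2.7}, here the boundary terms from integration by parts do not vanish, so one must carefully pair the boundary flux of $\mathbf{b}_x$ against the trace bound $\int_0^T|\mathbf{b}_x(a,t)|^2dt\le C$ obtained in Lemma \ref{2.11}, and ensure the term $\iint u_x\mathbf{b}\cdot\mathbf{b}_{xx}$ is absorbed using $\int_0^T\|u_x\|_{L^\infty(\Omega)}^2dt\le C$ from \eqref{ux} together with $\|\mathbf{b}\|_{L^\infty(Q_T)}\le C$.
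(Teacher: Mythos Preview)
There is a genuine gap. Your plan hinges on the unweighted bound $\iint_{Q_T}|\mathbf{b}_{xx}|^2\,dxdt\le C$ uniformly in $\mu$, but this estimate is not available: from $\nu\mathbf{b}_{xx}=\mathbf{b}_t+(u\mathbf{b})_x-\mathbf{w}_x$ it would force $\iint_{Q_T}|\mathbf{w}_x|^2\,dxdt\le C$ uniformly, which fails---only $\mu\iint|\mathbf{w}_x|^2\le C$ and $\sup_t\int_\Omega|\mathbf{w}_x|\,dx\le C$ hold (Lemmas~\ref{2.2} and~\ref{2.11}), and the boundary layer in $\mathbf{w}$ makes $\mathbf{w}_x$ blow up near $\partial\Omega$ as $\mu\to 0$. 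Your proposed fix does not close: the weighted estimate of Lemma~\ref{2.7} gives only $\iint|\mathbf{b}_{xx}|^2\omega^2\le C$, and the trace bound $\int_0^T|\mathbf{b}_x(a,t)|^2dt\le C$ controls the two endpoint values, not the near-boundary strip. Consequently your route to $\int_0^T\|\mathbf{b}_x\|_{L^\infty}^2dt$ via interpolation with $\|\mathbf{b}_{xx}\|_{L^2}$, and the Cauchy--Schwarz bound $\iint|\mathbf{b}_x||\mathbf{b}_{xx}|\le(\iint|\mathbf{b}_x|^2)^{1/2}(\iint|\mathbf{b}_{xx}|^2)^{1/2}$, both break down.

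The paper bypasses $L^2$ control of $\mathbf{b}_{xx}$ entirely. For $\int_0^T\|\mathbf{b}_x\|_{L^\infty}^2dt$ it integrates \eqref{e1}$_4$ from $z$ to $y$ and then averages in $y$ to obtain the pointwise representation
\[
\nu\mathbf{b}_x(z,t)=-\int_0^1\!\!\int_z^y\mathbf{b}_t\,dx\,dy-\int_0^1(u\mathbf{b}-\mathbf{w})\,dy+(u\mathbf{b}-\mathbf{w})(z,t),
\]
so that $\|\mathbf{b}_x(\cdot,t)\|_{L^\infty}\le C\big(\|\mathbf{b}_t(\cdot,t)\|_{L^2(\Omega)}+1\big)$ using only $\|\mathbf{w}\|_{L^\infty(Q_T)}\le C$ from Lemma~\ref{2.11}; squaring and integrating in $t$ gives the bound directly from $\iint|\mathbf{b}_t|^2\le C$. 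For $\iint|\mathbf{b}_x||\mathbf{b}_{xx}|$ the paper substitutes the equation and handles the dangerous piece by
\[
\iint_{Q_T}|\mathbf{b}_x||\mathbf{w}_x|\,dxdt\le\int_0^T\|\mathbf{b}_x\|_{L^\infty(\Omega)}\Big(\int_\Omega|\mathbf{w}_x|\,dx\Big)dt\le C,
\]
pairing the $L^\infty_x$ bound on $\mathbf{b}_x$ with the $L^\infty_tL^1_x$ bound on $\mathbf{w}_x$. Your quartic bound $\iint|\mathbf{b}_x|^4\le C\int_0^T\|\mathbf{b}_x\|_{L^\infty}^2\int_\Omega|\mathbf{b}_x|^2$ is exactly what the paper does.
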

 \begin{proof}
 For any fixed $ z \in [0, 1]$, we first integrate \eqref{e1}$_4$ from $z$ to $y \in [0, 1]$ in $x$, and then integrate the resulting equation over $(0, 1)$ in $y$ to have
 \begin{equation*}\label{equ4}
 \begin{split}
\mathbf{b}_x(z,t)=&-\frac{1}{\nu}\left\{\int_0^1\hspace{-2mm}\int_z^y
\mathbf{b}_t(x,t)
dxdy+\int_0^1(u\mathbf{b}-\mathbf{w})(y,t)dy-(u\mathbf{b}-\mathbf{w})(z,t)\right\},
\end{split}\end{equation*}
  which together with Lemmas \ref{2.10} and \ref{2.11}  imply that $\int_0^T\|\mathbf{b}_x\|_{L^\infty(\Omega)}^2dt \leq C.$

Combining Lemmas \ref{2.10} and \ref{2.11} gives
 \begin{equation*}\label{bxxbound}
 \begin{split}
  \iint_{Q_T}|\mathbf{b}_x||\mathbf{b}_{xx}|dxdt
 &=\frac{1}{\nu}\iint_{Q_T}|\mathbf{b}_x||\mathbf{b}_{t}+(u\mathbf{b})_x-\mathbf{w}_x|dxdt\\
  &\leq C+C\int_0^T\|\mathbf{b}_x\|_{L^\infty(\Omega)}\int_\Omega|\mathbf{w}_x|dxdt \leq
 C,
\end{split}\end{equation*}
and
\begin{equation*}\label{bxbound}
\begin{split}
\iint_{Q_T}|\mathbf{b}_x |^4dxdt\leq C\int_0^T\|\mathbf{b}_x\|_{L^\infty(\Omega)}^2\int_\Omega|\mathbf{b}_x|^2dxdt\leq   C.
\end{split}\end{equation*}
The proof of the lemma  is then completed.
\end{proof}

Now some estimates given in Lemmas \ref{2.6} and \ref{2.10} can be improved
as follows.
 \begin{lemma}\label{2.13}Under the assumptions in Theorem \ref{existencethm}, we have
\begin{equation*}\label{we1}
\begin{split}
&
\sqrt{\mu}\sup\limits_{0<t<T}\int_\Omega|\mathbf{w}_x|^2dx+\mu^{3/2}
\iint_{Q_T}
 |\mathbf{w}_{xx}|^2 dxdt \leq  C,\\
  & \sup\limits_{0<t<T}\int_\Omega|\mathbf{w}_x|^2\omega  dx+\iint_{Q_T} \big(\mu |\mathbf{w}_{xx}|^2+|\mathbf{b}_{xx}|^2\big)
  \omega dxdt\leq C.
\end{split}
\end{equation*}
 \end{lemma}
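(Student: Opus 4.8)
The plan is to revisit the weighted energy estimate from Lemma~\ref{2.6}, but now exploit the improved bounds on $\mathbf{b}$ obtained in Lemma~\ref{2.12}, in particular $\int_0^T\|\mathbf{b}_x\|_{L^\infty(\Omega)}^2dt\leq C$ and $\iint_{Q_T}|\mathbf{b}_x|^4dxdt\leq C$, together with the fact that $\|u_x\|_{L^\infty(\Omega)}$ now lies in $L^2(0,T)$ (from \eqref{ux} and Lemma~\ref{2.10}). First I would redo the computation behind \eqref{0v100}: taking the inner product of \eqref{w12} with $\mu\mathbf{w}_{xx}$ and integrating over $Q_t$, the only dangerous terms are the one containing $\mathbf{b}_x$ and the boundary term $\mu\int_0^t\mathbf{w}_t\cdot\mathbf{w}_x|_{x=0}^{x=1}ds$. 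For the first, instead of absorbing $\mu\iint\frac1\rho|\mathbf{b}_x||\mathbf{w}_{xx}|$ purely against the $\mathbf{w}_{xx}$-term (which costs a full power of $\mu$), I would keep the Young split as $\frac{\mu^2}{4}\iint\frac1\rho|\mathbf{w}_{xx}|^2 + C\iint|\mathbf{b}_x|^2$, which is already $O(1)$ by Lemma~\ref{2.2}; the point is that after dividing the resulting inequality through by $\sqrt{\mu}$ we gain the extra $\mu^{-1/2}$ only where it is affordable. Concretely, multiplying the Gronwall-ready inequality
\[
\mu\int_\Omega|\mathbf{w}_x|^2dx+\mu^2\iint_{Q_t}\tfrac1\rho|\mathbf{w}_{xx}|^2dxds\leq C+C\int_0^t(1+\|u_x\|_{L^\infty})\Big(\mu\int_\Omega|\mathbf{w}_x|^2dx\Big)ds
\]
already gives the first line of the lemma after noting $\int_0^T\|u_x\|_{L^\infty}dt\leq C$ and then, to upgrade $\mu^2$ to $\mu^{3/2}$, observing that $\sqrt{\mu}\sup_t\int|\mathbf{w}_x|^2\leq C$ feeds back into the boundary term $\mu\int_0^t\|\mathbf{w}_x\|_{L^\infty}ds$ via \eqref{wx2} with one fewer power of $\mu$ to spare.

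For the second line, I would take the inner product of \eqref{w12} with $\mathbf{w}_{xx}\,\omega(x)$ (weight $\omega$, not $\omega^2$) and integrate over $Q_t$, mirroring the proof of Lemma~\ref{2.8} but tracking the new weight. Integration by parts on the $\mathbf{w}_t\cdot\mathbf{w}_{xx}\omega$ term produces $-\tfrac12\int_\Omega|\mathbf{w}_x|^2\omega\,dx$ plus a commutator $\iint\mathbf{w}_t\cdot\mathbf{w}_x\,\omega'\,dxds$; since $|\omega'|\leq1$ and $\mathbf{w}_t\in L^2(Q_T)$ by \eqref{w1112}, this is controlled by $C+C\iint|\mathbf{w}_x|^2dx\,ds$, but we only have $\|\mathbf{w}_x\|_{L^\infty(0,T;L^1)}$ and $\|\sqrt{\omega}\mathbf{w}_x\|$-type bounds at this stage, so I would interpolate $\int|\mathbf{w}_x|^2 dx$ against $\int|\mathbf{w}_x|dx$ and $\mu\int|\mathbf{w}_{xx}|^2dx$ using the first line of the present lemma to close. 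The $\mathbf{b}_x$-term becomes $-\iint\mathbf{b}_x\cdot\mathbf{w}_{xx}\frac{\omega}{\rho}dxds$, which after one integration by parts reads $\iint\mathbf{w}_x\cdot\mathbf{b}_{xx}\frac{\omega}{\rho}+$ lower-order terms with $\omega'$ and $\rho_x$; the leading piece is absorbed by invoking the companion estimate for $\iint|\mathbf{b}_{xx}|^2\omega\,dxds$. That companion estimate I would obtain exactly as in Lemma~\ref{2.7} but with weight $\omega$ in place of $\omega^2$: pairing \eqref{e1}$_4$ with $\mathbf{b}_{xx}\omega$ gives $\nu\iint|\mathbf{b}_{xx}|^2\omega\,dxds\leq C+C\iint|\mathbf{w}_x|^2\omega\,dxds+C(\iint u_{xx}^2dxds)^{1/2}$, and now $\iint u_{xx}^2dxds\leq C$ by \eqref{w1112}, so the right-hand side is $C+C\iint|\mathbf{w}_x|^2\omega\,dxds$. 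Adding the two weighted inequalities and running Gronwall in $\sup_t\int(|\mathbf{w}_x|^2+|\mathbf{b}_x|^2)\omega\,dx$ — with the $(1+\|u_x\|_{L^\infty})$ coefficient still in $L^1(0,T)$ — closes the second line.

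The main obstacle I anticipate is bookkeeping the powers of $\mu$ in the first line so that the boundary term does not consume more than the $\mu^{1/2}$ we are allotting: the estimate \eqref{wx2} for $|\mathbf{w}_x|^2$ at the endpoints involves the product $(\mu\int|\mathbf{w}_x|^2)^{1/4}(\mu^2\int|\mathbf{w}_{xx}|^2)^{1/4}$ times a stray $\mu^{1/4}$, and to get $\mu^{3/2}\iint|\mathbf{w}_{xx}|^2$ rather than $\mu^2\iint|\mathbf{w}_{xx}|^2$ one must be careful that the $\epsilon\mu^2\iint\frac1\rho|\mathbf{w}_{xx}|^2$ absorbed on the left is genuinely at the $\mu^2$ level before dividing by $\sqrt{\mu}$; I would therefore first prove $\mu\sup_t\int|\mathbf{w}_x|^2+\mu^2\iint|\mathbf{w}_{xx}|^2\leq C\sqrt{\mu}$ directly (which is the content of the $\mu^{1/4}$-weighted bound in \eqref{ve}) and only then divide through. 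A secondary technical point is that in the $\omega$-weighted estimate the term $\iint|\mathbf{b}_x|^2\omega\,\rho_x^2\,dxds$ must be handled as in Lemma~\ref{2.8}, using $\sup_t\int\rho_x^2dx\leq C$ and $\int_0^T\|\mathbf{b}_x\|_{L^\infty}^2dt\leq C$ from Lemma~\ref{2.12}; this is now straightforward since both factors are available, whereas in Lemma~\ref{2.8} it required the $\omega^2$ weight and an extra integration by parts.
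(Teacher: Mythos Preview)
Your plan for the second estimate (running the $\omega$-weighted versions of Lemmas~\ref{2.7} and~\ref{2.8} and coupling them via Gronwall) is essentially the paper's argument, and once the first line is available the cross terms $\mu\iint|\mathbf{w}_x\cdot\mathbf{w}_{xx}|$ and $\iint|\mathbf{b}_x\cdot\mathbf{w}_x|$ are indeed bounded by the first line together with Lemmas~\ref{2.11}--\ref{2.12}. (The paper handles the commutator $\iint\mathbf{w}_t\cdot\mathbf{w}_x\,\omega'$ by substituting \eqref{w12} for $\mathbf{w}_t$ rather than by interpolation, which avoids ever needing $\iint|\mathbf{w}_x|^2$ unweighted; your interpolation sketch is vaguer but not the main issue.)

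The genuine gap is in the first line. Your proposed treatment of the term $-\mu\iint_{Q_t}\frac{1}{\rho}\mathbf{b}_x\cdot\mathbf{w}_{xx}\,dxds$ is the naive Young split $\frac{\mu^2}{4}\iint\frac{1}{\rho}|\mathbf{w}_{xx}|^2+C\iint|\mathbf{b}_x|^2$, and you note the last piece is $O(1)$. But that is exactly the problem: with an $O(1)$ forcing term the Gronwall inequality you display yields only $\mu\sup_t\int|\mathbf{w}_x|^2+\mu^2\iint|\mathbf{w}_{xx}|^2\leq C$, i.e.\ Lemma~\ref{2.6} again, and dividing through by $\sqrt{\mu}$ is not ``affordable'' there. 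Your later remark that you would ``first prove $\mu\sup_t\int|\mathbf{w}_x|^2+\mu^2\iint|\mathbf{w}_{xx}|^2\leq C\sqrt{\mu}$ directly'' identifies the right target but supplies no mechanism for reaching it; the bootstrap you sketch (feeding $\sqrt{\mu}\sup_t\int|\mathbf{w}_x|^2\leq C$ back into the boundary term) is circular, and in any case would only improve the boundary contribution, not the $C\iint|\mathbf{b}_x|^2=O(1)$ term.

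What is missing is one integration by parts: the paper rewrites
\[
-\mu\iint_{Q_t}\frac{\mathbf{b}_x\cdot\mathbf{w}_{xx}}{\rho}\,dxds
=\mu\iint_{Q_t}\frac{\mathbf{b}_{xx}\cdot\mathbf{w}_{x}}{\rho}\,dxds
-\mu\iint_{Q_t}\frac{\mathbf{b}_{x}\cdot\mathbf{w}_{x}}{\rho^2}\rho_x\,dxds
-\mu\int_0^t\frac{\mathbf{b}_x\cdot\mathbf{w}_{x}}{\rho}\Big|_{x=0}^{x=1}ds.
\]
Now each piece carries an explicit factor of $\mu$ on something that is either Gronwall-absorbable or uniformly bounded: the first is $\leq C\mu\iint|\mathbf{b}_{xx}|^2+C\mu\iint|\mathbf{w}_x|^2$, and $\iint|\mathbf{b}_{xx}|^2\leq C+C\iint|\mathbf{w}_x|^2$ by Lemma~\ref{2.10}; the second is $\leq C\mu$ via $\int_0^T\|\mathbf{b}_x\|_{L^\infty}^2\,dt\leq C$ (Lemma~\ref{2.12}) and Lemma~\ref{2.4}; the boundary term is $\leq C\mu\bigl(\int_0^t\|\mathbf{w}_x\|_{L^\infty}^2\,ds\bigr)^{1/2}$, which via \eqref{wx2} and Lemma~\ref{2.6} contributes $C\sqrt{\mu}+\epsilon\mu^2\iint\frac{1}{\rho}|\mathbf{w}_{xx}|^2+\epsilon\mu\iint|\mathbf{w}_x|^2$. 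This is where the $\sqrt{\mu}$ comes from, and without this integration by parts you cannot beat the $O(1)$ forcing.
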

\begin{proof}
For the first estimate, we can use an argument similar to  Lemma 2.6.
The key point is to estimate the  term
$-\mu\iint_{Q_t}\frac{1}{\rho}\mathbf{b}_x\cdot\mathbf{w}_{xx}dxds $
in \eqref{0v100}. Indeed, we
have
\begin{equation}\label{w0}
\begin{split}
& -\mu\iint_{Q_t}\frac{1}{\rho}\mathbf{b}_x\cdot\mathbf{w}_{xx}dxds\\[1mm]
 &=\mu\iint_{Q_t}\frac{\mathbf{b}_{xx}\cdot\mathbf{w}_{x}}{\rho}dxds
 -\mu\iint_{Q_t}\frac{\mathbf{b}_{x}\cdot\mathbf{w}_{x}}{\rho^2}\rho_xdxds
 -\mu\int_0^t \frac{\mathbf{b}_x\cdot\mathbf{w}_{x}}{\rho}\bigg|_{x=0}^{x=1} ds\\
 &\leq C\mu\iint_{Q_t}| \mathbf{b}_{xx}|^2 dxds+C\mu\iint_{Q_t}|\mathbf{w}_{x}|^2dxds+\mu\iint_{Q_t}|\mathbf{b}_{x}|^2 \rho_x^2dxds\\[1mm]
 &\quad+C\mu\left(\int_0^t\|\mathbf{b}_{x}\|_{L^\infty(\Omega)}^2 ds\right)^{1/2}\left(\int_0^t\|\mathbf{w}_{x}\|_{L^\infty(\Omega)}^2 ds\right)^{1/2}\\
 &\leq C\mu + C\mu\iint_{Q_t}| \mathbf{b}_{xx}|^2 dxds+C\mu\iint_{Q_t}|\mathbf{w}_{x}|^2dxds
 +C\mu \left(\int_0^t\|\mathbf{w}_{x}\|_{L^\infty(\Omega)}^2 ds\right)^{1/2},\\
  \end{split}
\end{equation}
where we have used the fact that  by Lemmas \ref{2.4} and \ref{2.12}, it holds
\begin{equation*}
\begin{split}
  \iint_{Q_t}|\mathbf{b}_{x}|^2|\rho_x|^2dxds \leq
  \int_0^t\|\mathbf{b}_{x}\|_{L^\infty(\Omega)}^2\int_\Omega\rho_x^2dxds\leq
  C.
  \end{split}
\end{equation*}
By \eqref{wx2} and Lemma 2.6, we have
\begin{equation}\label{w01}
\begin{split}
  &\mu \left(\int_0^t\|\mathbf{w}_{x}\|_{L^\infty(\Omega)}^2 ds\right)^{1/2}\\
   &\leq C\sqrt{\mu}+C \mu^{1/4}\left(\mu\iint_{Q_t}|\mathbf{w}_{x}|^2dxds\right)^{1/4}
   \left(\mu^2\iint_{Q_t}|\mathbf{w}_{xx}|^2dxds\right)^{1/4}\\[1mm]
&\leq  \frac{C\sqrt{\mu}}{\epsilon}+\epsilon \mu\iint_{Q_t}
|\mathbf{w}_{x}|^2dxds+\epsilon \mu^2
\iint_{Q_t}\frac{1}{\rho}|\mathbf{w}_{xx}|^2dxds,~ \forall
\epsilon \in (0,1).
\end{split}
\end{equation}
From Lemma \ref{2.10}, we have
\begin{equation}\label{bxx}
\begin{split}
 & \iint_{Q_t}|\mathbf{b}_{xx}|^2dxds \leq C+ C\iint_{Q_t}  |\mathbf{w}_x|^2dxds.
\end{split}
\end{equation}
  Thus, \eqref{bxx} follows from the Gronwall inequality.

Inserting  the above estimates into \eqref{w0} and taking a small
$\epsilon>0$, we have
\begin{equation*}
\begin{split}
  -\mu\iint_{Q_t}\frac{1}{\rho}\mathbf{b}_x\cdot\mathbf{w}_{xx}dxdt \leq C\sqrt{\mu} +C\mu\iint_{Q_t}|\mathbf{w}_{x}|^2dxdt+\frac{\mu^2}{4}
\iint_{Q_t}\frac{1}{\rho}|\mathbf{w}_{xx}|^2dxds.
  \end{split}
\end{equation*}
Then, an argument similar to Lemma \ref{2.6} leads to
\begin{equation*}
\begin{split}
  &\mu\int_\Omega|\mathbf{w}_x|^2dx
 +\mu^2\iint_{Q_t} |\mathbf{w}_{xx}|^2dxds
  \leq C\sqrt{\mu}
 +C\int_0^t\big(1+\|u_x\|_{L^\infty(\Omega)}\big)\left(\mu\int_{\Omega}|\mathbf{w}_x|^2dx\right)ds.
  \end{split}
\end{equation*}
Thus,   the first estimate of this lemma follows from the Gronwall inequality and
\eqref{u0}.

The second estimate can proved by the arguments similar to Lemma \ref{2.7} and
\eqref{w1}$_1$ and by the first estimate and Lemmas
\ref{2.10}-\ref{2.12}. In fact, this can be done  by using $\omega$ instead of
$\omega^2$ in \eqref{b111} and   \eqref{0v10} and noticing the following facts:
\begin{equation*} \begin{split}
  & \mu\iint_{Q_T}|\mathbf{w}_x\cdot\mathbf{w}_{xx}|dxdt \leq C\sqrt{\mu}\iint_{Q_T}|\mathbf{w}_x|^2dxdt+C\mu^{3/2}\iint_{Q_T}|\mathbf{w}_{xx}|^2dxdt\leq C,\\[1mm]
  & \iint_{Q_T}|\mathbf{b}_x\cdot\mathbf{w}_{x}|dxdt  \leq C\int_0^T\|\mathbf{b}_x\|_{L^\infty(\Omega)}\int_\Omega|\mathbf{w}_x|dxdt \leq
 C.
\end{split}\end{equation*}
 The proof is completed.
\end{proof}

As a consequence of Lemma \ref{2.13} and \eqref{w01}, we   have
\begin{equation}\label{wx4}
\begin{split}
   \mu^{3/2}\iint_{Q_T} |\mathbf{w}_{x}|^4dxdt  \leq C \mu \int_0^T \|\mathbf{w}_{x}\|_{L^\infty(\Omega)}^2 \left(\sqrt{ \mu}\int_{Q_T}|\mathbf{w}_x|^2dx\right)dt\leq C.
  \end{split}
\end{equation}

With the above estimates, we are now ready to show
 the upper bound estimate on   $\theta$.

\begin{lemma}\label{2.14}
 Let the assumptions in Theorem \ref{existencethm}  hold. Then  $\theta \leq C.$
\end{lemma}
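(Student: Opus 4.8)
The plan is to derive a pointwise upper bound on $\theta$ by first obtaining a bound on $\sup_t \int_\Omega \theta_x^2\, dx$ together with $\iint_{Q_T}\theta_{xt}^2$ (or $\iint \kappa \theta_{xx}^2$), and then invoking a Sobolev embedding in $x$. Concretely, I would start from the temperature equation \eqref{e1}$_5$ rewritten in the form
\begin{equation*}
\rho\theta_t - (\kappa\theta_x)_x = -pu_x + \lambda u_x^2 + \mu|\mathbf{w}_x|^2 + \nu|\mathbf{b}_x|^2,
\end{equation*}
multiply by $\theta_t$ (or by $-(\kappa\theta_x)_x / \kappa$, depending on which is cleaner under \eqref{kappa}), and integrate over $Q_t$. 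The principal linear term produces $\tfrac12\int_\Omega \mathcal{K}(\rho,\theta)\theta_x^2\,dx$ after an integration by parts in time, where $\mathcal{K}$ is a primitive of $\kappa$ in $\theta$; since $\kappa \geq \kappa_1\theta^q$, controlling this quantity controls $\int_\Omega \theta^q \theta_x^2\,dx$ at each time, which is essentially $\int_\Omega |(\theta^{(q+2)/2})_x|^2\,dx$ up to constants. Combined with $\int_0^T\|\theta\|_{L^\infty}^{2}\,dt \leq C$ from Lemma \ref{2.2}, a Gagliardo–Nirenberg/embedding argument then upgrades this to $\theta \leq C$ on $\overline Q_T$.

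The key steps, in order, are: (1) use the already-established uniform bounds — $C^{-1}\le\rho\le C$, $\|u_x\|_{L^2(Q_T)}+\|u_{xx}\|_{L^2(Q_T)}\le C$, $\|u_x\|_{L^\infty(\Omega)}\in L^{m_0}(0,T)$ from Lemma \ref{2.5}, $\int_0^T\|\mathbf{b}_x\|_{L^\infty}^2\le C$ from Lemma \ref{2.12}, $\sqrt\mu\sup_t\int|\mathbf{w}_x|^2 + \mu^{3/2}\iint|\mathbf{w}_{xx}|^2\le C$ and $\mu^{3/2}\iint|\mathbf{w}_x|^4 \le C$ from Lemma \ref{2.13} and \eqref{wx4} — to bound the right-hand-side source terms after multiplication by $\theta_t$; (2) handle the term $\mu\iint_{Q_t}|\mathbf{w}_x|^2\theta_t$ carefully, estimating $\mu|\mathbf{w}_x|^2 \le \mu^{1/4}\cdot \mu^{3/4}|\mathbf{w}_x|^2$ and using $\iint \mu^{3/2}|\mathbf{w}_x|^4 \le C$ with Young's inequality to absorb a fraction of $\iint \rho\theta_t^2$; (3) integrate by parts in time on $\iint (\kappa\theta_x)_x\theta_t$ to get the good term $\sup_t\int_\Omega \theta^q\theta_x^2\,dx$, paying attention to the boundary term $\theta_x|_{x=0,1}=0$, which kills the spatial boundary contribution; (4) close a Gronwall inequality in $t$ using $\int_0^T(1+\|u_x\|_{L^\infty}+\|\mathbf{b}_x\|_{L^\infty}^2)\,dt \le C$; (5) finally, from $\sup_t\int_\Omega |(\theta^{(q+2)/2})_x|^2\,dx \le C$ together with $\int_0^T \|\theta\|_{L^\infty}\,dt\le C$, deduce via $\|\theta^{(q+2)/2}\|_{L^\infty(\Omega)}^2 \le C\int_\Omega \theta^{q+2}\,dx + C\int_\Omega |(\theta^{(q+2)/2})_x|^2\,dx$ and an interpolation in time that $\theta$ is bounded.

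I expect the main obstacle to be step (2) — controlling the magnetic/transverse-velocity dissipation terms $\mu|\mathbf{w}_x|^2$ and $\nu|\mathbf{b}_x|^2$ as forcing in the temperature equation when tested against $\theta_t$. The $|\mathbf{b}_x|^2$ term is fine because of the $L^\infty(\Omega)$-in-time bound on $\mathbf{b}_x$ from Lemma \ref{2.12}, but $\mu|\mathbf{w}_x|^2$ is only controlled in weighted or $\mu$-degenerate norms, so one must exploit precisely the scaling $\mu^{3/4}\|\mathbf{w}_x\|$-type bounds and the $L^4$ bound \eqref{wx4} to avoid losing a power of $\mu$; getting the exponents to match so that everything is absorbable is the delicate accounting. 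A secondary subtlety is that $\kappa=\kappa(\rho,\theta)$ depends on $\rho$ too, so when integrating by parts in time one generates a term involving $\kappa_\rho \rho_t$ and $\kappa_\theta\theta_t$ inside $\int \mathcal{K}_t$, which must be dominated using $\rho_t \in L^2(Q_T)$ (Lemma \ref{2.4}) and Young's inequality against the good term; since $\kappa$ is only assumed twice differentiable with a one-sided lower bound, one should phrase the argument so that only the lower bound $\kappa\ge\kappa_1\theta^q$ and local boundedness of $\kappa,\kappa_\rho,\kappa_\theta$ on the (already known to be compact) range of $(\rho,\theta)$ below any fixed level are used — a standard but necessary bootstrap since we are in the middle of proving $\theta$ is bounded.
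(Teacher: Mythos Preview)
Your plan has a real gap, and it is precisely the point you flag as a ``secondary subtlety.'' Under assumption \eqref{kappa} the only structural information on $\kappa$ is the lower bound $\kappa(\rho,\theta)\ge\kappa_1\theta^q$ and $C^2$ smoothness; there is \emph{no} a priori control on $\kappa$, $\kappa_\rho$, or $\kappa_\theta$ for large $\theta$. Any $L^2$-energy route (multiplying by $\theta_t$, by $\kappa\theta_t$, or by $-(\kappa\theta_x)_x/\kappa$) produces, after the time integration by parts you describe, error terms containing $\kappa\kappa_\rho$ or $\kappa_\theta$ multiplied by $\theta_x^2$, $\theta_x^4$, etc. These cannot be absorbed without already knowing $\theta\le C$. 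A bootstrap does not close: if you assume $\theta\le M$ on $[0,T^*]$, the constants in your estimate depend on $\sup_{\rho\in[C^{-1},C],\,\theta\in[C^{-1},M]}(|\kappa|+|\kappa_\rho|+|\kappa_\theta|)$, which may grow arbitrarily with $M$, so you cannot improve $M$ to $M/2$. In short, the approach of Lemma~\ref{2.15} genuinely requires Lemma~\ref{2.14} as input, not the other way around.

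The paper resolves this circularity by a different, $L^1$-based argument. It differentiates the temperature equation in $x$ to obtain an equation for $z=\theta_x$ of the form $z_t=(az_x)_x+(bz)_x+(c\theta)_x+f_x$ with $a=\kappa/\rho$, and tests against a regularized sign function $\varphi_\epsilon'(z)$ with $\varphi_\epsilon(s)=\sqrt{s^2+\epsilon^2}$. The crucial observation is that the second-order contribution $-\iint a\,z_x^2\,\varphi_\epsilon''(z)$ has a good sign \emph{regardless of the size of $a$}; moreover the drift term $bz$ is absorbed by completing the square inside this good term, producing only $\iint(b^2/4a)z^2\varphi_\epsilon''(z)$, which vanishes as $\epsilon\to0$. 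No upper bound on $\kappa$ or its derivatives is used anywhere. One is left with $\int_\Omega|\theta_x|\,dx\le\int_\Omega|\theta_{0x}|\,dx+\iint(|(c\theta)_x|+|f_x|)$, and the right-hand side is finite by the already-available estimates (Lemmas~\ref{2.4}, \ref{2.10}, \ref{2.12}, \ref{2.13} and \eqref{wx4}). Together with $\int_\Omega\theta\,dx\le C$ this gives $\theta\le C$.
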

\begin{proof}
Rewrite the equation \eqref{e1}$_4$  into
\begin{equation}\label{theta9}
 \begin{split}
 \theta_t= a(x,t)\theta_{xx}+b(x,t)\theta_x+c(x,t) \theta+f(x,t),
\end{split}
\end{equation}
where $
a =\rho^{-1}\kappa, b =\rho^{-1}\kappa_x-u, c =-\gamma
u_x, f =\rho^{-1} (\lambda
u_x^2+\mu|\mathbf{w}_x|^2+\nu|\mathbf{b}_x|^2).
$

Set $z=\theta_x$. Differentiating the equation \eqref{theta9} in $x$ yields
\begin{equation}\label{z1}
 \begin{split}
 z_t=(a z_x)_x+(b z)_x+(c \theta)_x +f_x.
\end{split}
\end{equation}
For $\epsilon \in (0, 1)$, denote $\varphi_\epsilon:\mathbb{R}\rightarrow \mathbb{R}^+$   by
$
\varphi_\epsilon(s)=\sqrt{s^2+\epsilon^2},
$
satisfying
\begin{equation*}\label{phi}
  \begin{split}
 &\varphi_\epsilon'(0)=0,\quad|\varphi_\epsilon'(s)|\leq 1,  \quad0\leq s^2\varphi_\epsilon''(s)\leq \epsilon, \quad\lim\limits_{\epsilon\rightarrow 0} \varphi_\epsilon(s)=|s|.
\end{split}
\end{equation*}
Multiplying  \eqref{z1} by $\varphi_\epsilon'(z)$, integrating over
$Q_t$, and noticing
$\varphi_\epsilon'(z)|_{x=0,1}=\varphi_\epsilon'(\theta_x)|_{x=0,1}=0$, $\varphi_\epsilon''(s)\geq 0$ and $|\varphi_\epsilon'(s)|\leq 1$,
we have
\begin{equation}\label{z2}
 \begin{split}
 &\int_\Omega \varphi_\epsilon(z)dx-\int_\Omega \varphi_\epsilon(\theta_{0x})dxds\\
 &=-\iint_{Q_t} \big(a z_x^2 +b z z_x\big)\varphi_\epsilon''(z)dxds +\iint_{Q_t}  [(c\theta)_x +f_x]\varphi_\epsilon'(z)dxds,\\
 &=-\iint_{Q_t} a\left[\Big(z_x +\frac{bz}{2a}\Big)^2\varphi_\epsilon''(z) +  \frac{b^2}{4a} z^2\varphi_\epsilon''(z)\right]dxds +\iint_{Q_t}  [(c\theta)_x +f_x]\varphi_\epsilon'(z)dxds,\\
 &\leq\iint_{Q_t} \frac{b^2}{4a} z^2\varphi_\epsilon''(z)dxds +\iint_{Q_t}  (|(c\theta)_x| +|f_x|) dxds.\\
 \end{split}
\end{equation}
Thus, letting $\epsilon\rightarrow 0$ in \eqref{z2} and using $0\leq   s^2\varphi_\epsilon''(s)\leq \epsilon$ and $\lim\limits_{\epsilon\rightarrow 0} \varphi_\epsilon(s)=|s|$, we have
\begin{equation}\label{theta10}
 \begin{split}
  \int_\Omega |\theta_x|dx\leq \int_\Omega|\theta_{0x}|dx  + \iint_{Q_t}(|c \theta_x|+|c_x \theta|+|f_x|)dxds.
\end{split}
\end{equation}
By Lemma \ref{2.10}, we have
\begin{equation*}
 \begin{split}
 &\iint_{Q_T}|c_x\theta| dxdt\leq C\left(\iint_{Q_T}u_{xx}^2dxdt\right)^{1/2}\left(\iint_{Q_T}\theta^2dxdt\right)^{1/2}\leq C,\\
&\iint_{Q_T}|c \theta_x| dxdt\leq C\left(\iint_{Q_T}u_{x}^2dxdt\right)^{1/2}\left(\iint_{Q_T}\theta_x^2dxdt\right)^{1/2}\leq C.
\end{split}
\end{equation*}
By   Lemmas \ref{2.4}, \ref{2.10} and \ref{2.12}, and \eqref{wx4}, we
obtain
\begin{equation*}\label{theta6}
 \begin{split}
 \iint_{Q_T}|f_x| dxdt
&\leq C\iint_{Q_T}\big(|u_x||u_{xx}|+\mu|\mathbf{w}_x\cdot\mathbf{w}_{xx}|+ |\mathbf{b}_x\cdot\mathbf{b}_{xx}|\big)dxdt\\
&\quad+C\iint_{Q_T}\big(u_x^2+\mu|\mathbf{w}_x|^2+ |\mathbf{b}_x|^2\big)|\rho_x|dxdt \leq C.
\end{split}
\end{equation*}
 Substituting the above estimates into \eqref{theta10} yields that
 $\int_\Omega|\theta_x|dx\leq C,$
which  yields the upper bound of $\theta$ because $\int_\Omega\theta dx\leq C$.
\end{proof}

The following estimtes are about the derivatives of $\theta$.

\begin{lemma}\label{2.15}
Under the assumptions in Theorem \ref{existencethm}, we have
\begin{equation*}\begin{split}
\sup\limits_{0<t<T}\int_\Omega \theta_x^2dx+\iint_{Q_T} \big(\theta_t^2+\theta_{xx}^2\big)dxdt\leq C.
\end{split}\end{equation*}
\end{lemma}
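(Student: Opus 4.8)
The plan is to combine two coupled estimates. First I would multiply the fifth equation of \eqref{e1}, rewritten (using \eqref{e1}$_1$ and $e=\theta$) as
$$\rho\theta_t+\rho u\theta_x-(\kappa\theta_x)_x+pu_x=\lambda u_x^2+\mu|\mathbf{w}_x|^2+\nu|\mathbf{b}_x|^2,$$
by $\theta_t$ and integrate over $Q_t$. Integrating by parts in $x$ on the diffusion term, where the boundary contributions vanish because $\theta_x|_{x=0,1}=0$, yields
$$\iint_{Q_t}\rho\theta_t^2\,dxds+\frac12\int_\Omega\kappa\theta_x^2\,dx=\frac12\int_\Omega\kappa\big|_{t=0}\theta_{0x}^2\,dx+\frac12\iint_{Q_t}\kappa_t\theta_x^2\,dxds-\iint_{Q_t}\rho u\theta_x\theta_t\,dxds-\iint_{Q_t}pu_x\theta_t\,dxds+\iint_{Q_t}\big(\lambda u_x^2+\mu|\mathbf{w}_x|^2+\nu|\mathbf{b}_x|^2\big)\theta_t\,dxds.$$
Since $C^{-1}\leq\rho,\theta\leq C$ and $\kappa\geq\kappa_1\theta^q\geq C^{-1}>0$ after Lemmas \ref{2.2} and \ref{2.14}, the left side dominates $C^{-1}\iint_{Q_t}\theta_t^2+C^{-1}\int_\Omega\theta_x^2\,dx$. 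The three cross terms on the right are each absorbed into $\tfrac14\iint_{Q_t}\rho\theta_t^2$ plus controlled quantities: $-\iint\rho u\theta_x\theta_t$ via the bounds on $\rho,u$ and $\iint_{Q_T}\theta_x^2\leq C$; $-\iint pu_x\theta_t$ via $p=\gamma\rho\theta\leq C$ and $\sup_t\int u_x^2\,dx\leq C$; and the last term via the $L^4$‑in‑$(x,t)$ bounds $\iint u_x^4\leq C$ (a consequence of \eqref{ux} and $\sup_t\int u_x^2\leq C$), $\mu^2\iint|\mathbf{w}_x|^4\leq\mu^{1/2}C$ from \eqref{wx4}, and $\iint|\mathbf{b}_x|^4\leq C$ from Lemma \ref{2.12}.

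The only genuinely troublesome term is the commutator $\tfrac12\iint\kappa_t\theta_x^2$, which is precisely where the general dependence $\kappa=\kappa(\rho,\theta)$ bites. Since $\rho,\theta$ now take values in a fixed compact set and $\kappa\in C^2$, one has $|\kappa_t|\leq C(|\rho_t|+|\theta_t|)$. For the density part I would substitute $\rho_t=-(\rho u)_x=-u\rho_x-\rho u_x$ and estimate using $\sup_t\int\rho_x^2\,dx\leq C$, $\int_0^T\|u_x\|_{L^\infty(\Omega)}^2\,dt\leq C$ and $\iint\theta_x^2\leq C$, so that it is bounded by a small multiple of $\sup_t\int\theta_x^2\,dx$ plus $\iint\theta_{xx}^2$ plus a constant. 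The temperature part $\tfrac12\iint\kappa_\theta\theta_t\theta_x^2$ is bounded by $\tfrac14\iint\rho\theta_t^2+C\iint\theta_x^4$, and $\iint\theta_x^4$ is treated through the interpolation inequality $\|\theta_x\|_{L^\infty(\Omega)}^2\leq C\|\theta_x\|_{L^2(\Omega)}\|\theta_{xx}\|_{L^2(\Omega)}$, valid because $\theta_x$ vanishes at $x=0,1$; together with $\sup_t\int_\Omega|\theta_x|\,dx\leq C$ from Lemma \ref{2.14} this gives $\iint\theta_x^4\leq C\int_0^T\|\theta_x\|_{L^\infty(\Omega)}^2\int_\Omega\theta_x^2\,dxdt$, which is expressible in terms of $\sup_t\int\theta_x^2\,dx$, $\iint\theta_{xx}^2$, and $\iint\theta_x^2\leq C$.

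For the second estimate I would solve the rewritten equation algebraically for $\theta_{xx}$,
$$\kappa\theta_{xx}=\rho\theta_t+\rho u\theta_x-\kappa_x\theta_x+pu_x-\lambda u_x^2-\mu|\mathbf{w}_x|^2-\nu|\mathbf{b}_x|^2,$$
square it, and integrate over $Q_T$; using $\kappa\geq C^{-1}>0$, $|\kappa_x|\leq C(|\rho_x|+|\theta_x|)$, the bounds of Lemmas \ref{2.2}, \ref{2.10}, \ref{2.12}, the bound \eqref{wx4}, and the same interpolation inequality, one controls $\iint_{Q_T}\theta_{xx}^2$ by $C+C\iint_{Q_T}\theta_t^2$ plus terms involving $\sup_t\int\theta_x^2\,dx$ and small multiples of $\iint\theta_{xx}^2$. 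Feeding this back into the first estimate, collecting the $\theta_x^4$‑type contributions, and running a Gronwall/absorption argument closes the system; this absorption is legitimate because the local strong solution of Section~2 already makes $\sup_{(0,T)}\int_\Omega\theta_x^2\,dx$, $\iint_{Q_T}\theta_t^2$ and $\iint_{Q_T}\theta_{xx}^2$ a priori finite, so the superlinear feedback terms can be absorbed once the auxiliary constants are fixed small.

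The hard part is exactly this commutator/feedback: because $\kappa=\kappa(\rho,\theta)$, the terms $\int\kappa_t\theta_x^2$ and $\int\kappa_x\theta_x\theta_{xx}$ appear in both estimates and couple $\sup_t\|\theta_x\|_{L^2(\Omega)}^2$ with $\|\theta_{xx}\|_{L^2(Q_T)}^2$ nonlinearly through $\|\theta_x\|_{L^\infty(\Omega)}^2\lesssim\|\theta_x\|_{L^2(\Omega)}\|\theta_{xx}\|_{L^2(\Omega)}$; the inputs that make the bookkeeping balance are the $L^1$‑in‑$x$ bound on $\theta_x$ from Lemma \ref{2.14} and the previously established $L^4$‑in‑$(x,t)$ bounds on $u_x$, $\sqrt{\mu}\,\mathbf{w}_x$ and $\mathbf{b}_x$.
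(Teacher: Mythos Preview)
Your strategy is natural, but the proposal has a genuine gap at exactly the point you flag as ``the hard part.'' When you multiply by $\theta_t$, the commutator $\tfrac12\iint\kappa_t\theta_x^2$ contains the piece $\tfrac12\iint\kappa_\theta\theta_t\theta_x^2$, which after absorbing $\theta_t$ leaves $C\iint\theta_x^4$. Your interpolation $\|\theta_x\|_{L^\infty}^2\le C\|\theta_x\|_{L^2}\|\theta_{xx}\|_{L^2}$ together with $\iint_{Q_T}\theta_x^2\le C$ only yields an inequality of the form
\[
X\ \le\ C+CX^2,\qquad X:=\sup_{0<t<T}\int_\Omega\theta_x^2\,dx+\iint_{Q_T}(\theta_t^2+\theta_{xx}^2),
\]
and the fact that $X<\infty$ for each fixed $\mu$ (from the local theory) does \emph{not} let you absorb the quadratic term to obtain a $\mu$-independent bound; a quadratic inequality has large solutions as well as small ones. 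Neither a standard Gronwall nor the $L^1_x$ bound on $\theta_x$ from Lemma~\ref{2.14} rescues this, because the weight multiplying $\theta_x^2$ is $\theta_x^2$ itself, whose time-uniform $L^1_x$ bound is precisely what you are trying to prove.

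The paper avoids this loop by multiplying \eqref{e1}$_5$ by $\kappa\theta_t$ rather than $\theta_t$. The point is the algebraic identity
\[
(\kappa\theta_t)_x-(\kappa\theta_x)_t=\kappa_x\theta_t-\kappa_t\theta_x=\kappa_\rho(\rho_x\theta_t-\rho_t\theta_x),
\]
so the $\kappa_\theta$ contributions cancel exactly and only $\kappa_\rho$ remains. After substituting $\rho_t=-(\rho u)_x$, the remainder terms are of the form $\iint(\kappa\theta_x)^2(\rho_x^2+|\rho_x|+|u_x|)$, where the bracketed factor has a \emph{time-uniform} $L^1_x$ bound from Lemmas~\ref{2.4} and~\ref{2.10}; hence everything is controlled linearly by $\int_0^t\|\kappa\theta_x\|_{L^\infty}^2\,ds$, which in turn is bounded by $C/\epsilon+\epsilon\iint(\rho^2\theta_t^2+f^2)$ via the embedding and the equation $(\kappa\theta_x)_x=\rho\theta_t-f$. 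This closes with a small-$\epsilon$ absorption, no superlinear feedback. Your second estimate for $\theta_{xx}$ is then fine once $\iint\theta_t^2$ is in hand.
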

\begin{proof} Rewrite the equation \eqref{e1}$_5$ as
\begin{equation}\label{theta1}
\begin{split}
 \rho\theta_t-(\kappa\theta_x)_x=\lambda u_x^2+\mu
|\mathbf{w}_x|^2+\nu |\mathbf{b}_x|^2-\rho
 u\theta_x-\gamma\rho\theta u_x=:f.
\end{split}\end{equation}
We first estimate  $\|f\|_{L^2(Q_T)}$. By
\eqref{bxbound}, \eqref{wx4}  and Lemmas \ref{2.10}, \ref{2.14} and \ref{2.12},
we have
\begin{equation}\label{f}
\begin{split}
 \iint_{Q_T}f^2dxdt\leq &
   C\iint_{Q_T}(u_x^4+\mu^2|\mathbf{w}_x|^4+\nu^2|\mathbf{b}_x|^4+\rho^2u^2\theta_x^2+\rho^2u_x^2\theta^2)dxdt \leq C.
\end{split}\end{equation}
 Multiplying  \eqref{theta1} by $\kappa\theta_t$ and  integrating over $Q_t$, we have
\begin{equation}\label{theta2}
\begin{split}
 &\iint_{Q_t} \rho\kappa\theta_t^2 dxdt+\iint_{Q_t}  \kappa\theta_x (\kappa\theta_t)_x dxdt
 =\iint_{Q_t}f\kappa\theta_tdxdt.
\end{split}\end{equation}
Observe that
$ (\kappa\theta_t)_x=(\kappa\theta_x)_t+\kappa_\rho\rho_x\theta_t+\kappa_\rho\theta_x(\rho_xu+\rho u_x),$
then
\begin{equation*}\label{theta3}
\begin{split}
  \iint_{Q_t}  \kappa\theta_x (\kappa\theta_t)_x dxdt=& \frac12\int_\Omega \kappa^2\theta_x^2dx-\frac12\int_\Omega \kappa^2(\rho_0,\theta_0)\theta_{0x}^2dx\\
&+\iint_{Q_t}\Big[\kappa\kappa_\rho\rho_x\theta_x\theta_t+\kappa\kappa_\rho\theta_x^2(\rho_xu+\rho
u_x)\Big]dxdt.
\end{split}\end{equation*}
Then substituting this into \eqref{theta2}  yields
\begin{equation}\label{theta8}
\begin{split}
 &\iint_{Q_t} \rho\kappa\theta_t^2 dxdt+\int_\Omega
 \kappa^2\theta_x^2dx\\
& \leq C-2\iint_{Q_t}\Big[\kappa\kappa_\rho\rho_x\theta_x\theta_t
 +\kappa\kappa_\rho\theta_x^2(\rho_xu+\rho u_x)-f\kappa\theta_t\Big]dxdt.\\
\end{split}\end{equation}
By  $C^{-1}\leq \rho, \theta \leq C$ and \eqref{kappa},
we have that $ \kappa_1\leq \kappa \leq C$ and $|\kappa_\rho| \leq C. $ By
Young inequality, \eqref{rho}, \eqref{f} and Lemma \ref{2.10}, we
obtain
\begin{equation}\label{theta4}
\begin{split}
 &-2\iint_{Q_t}\Big[\kappa\kappa_\rho\rho_x\theta_x\theta_t+\kappa\kappa_\rho\theta_x^2(\rho_xu+\rho u_x)
 -f\kappa\theta_t\Big]dxdt\\
 & \leq C+\frac14 \iint_{Q_t} \rho\kappa\theta_t^2 dxdt+C\iint_{Q_t} (\kappa\theta_x)^2(\rho_x^2+|\rho_x|+|u_x|) dxds\\
 & \leq C+\frac14 \iint_{Q_t} \rho\kappa\theta_t^2 dxdt+C\int_0^t\|\kappa\theta_x\|_{L^\infty(\Omega)}^2 ds.
\end{split}\end{equation}
Now we  estimate the second integral on the right hand side
of \eqref{theta4}. By the embedding theorem  and  Young inequality, we have
\begin{equation*}\begin{split}
 \int_0^t\|\kappa\theta_x\|_{L^\infty(\Omega)}^2 ds \leq& \iint_{Q_t} |\kappa\theta_x|^2 dxds+2\iint_{Q_t} | \kappa\theta_x ||(\kappa\theta_x)_x| dxds\\
 \leq & \frac{C}{\epsilon}+\frac{\epsilon}{2}\iint_{Q_t} \big|(\kappa\theta_x)_x\big|^2 dxds, \quad \forall \epsilon>0.
\end{split}\end{equation*}
 Then,  from \eqref{theta1}, we obtain
\begin{equation*}\begin{split}
 \int_0^t\|\kappa\theta_x\|_{L^\infty(\Omega)}^2 ds \leq  \frac{C}{\epsilon}
 + \epsilon\iint_{Q_t} (\rho^2\theta_t^2+f^2)dxdt.
\end{split}\end{equation*}
Plugging it into \eqref{theta4}, taking a small $\epsilon>0$ and
using \eqref{f}, we obtain
\begin{equation*}
\begin{split}
  -2\iint_{Q_t}\Big[\kappa\kappa_\rho\rho_x\theta_x\theta_t+\kappa\kappa_\rho\theta_x^2(\rho_xu+\rho u_x)
  -f\kappa\theta_t\Big]dxdt
   \leq C+\frac12 \iint_{Q_t} \rho\kappa\theta_t^2 dxdt.
\end{split}\end{equation*}
This together with \eqref{kappa} and \eqref{theta8} give
\begin{equation}\label{theta}
\begin{split}
\sup\limits_{0<t<T}\int_\Omega \theta_x^2dx+\iint_{Q_T}  \theta_t^2
dxdt\leq C.
\end{split}\end{equation}

By \eqref{theta}  and Lemma \ref{2.14}, %\eqref{e1}$_5$ follows
it follows from  \eqref{e1}$_5$  that $\|\theta_{xx}\|_{L^2(Q_T)} \leq C$,
and this completes the proof of the lemma.
\end{proof}

 In summary, all the estimates in \eqref{ve} have been proved.

\subsection{Proof of Theorem 1.1(ii)}
 Similar to \eqref{rho}$_2$, one can show that
\begin{equation}\label{base5}
\begin{aligned}
 \|(u, \mathbf{b}, \theta)\|_{C^{1/2, 1/4}(\overline Q_T)}\leq C,\quad
 \|\mathbf{w}\|_{C^{1/2, 1/4}([\delta, 1-\delta]\times[0, T])}\leq C,~\forall\delta\in\big(0, 1/2\big) .
  \end{aligned}
  \end{equation}
From  \eqref{rho}$_2$,   \eqref{bxbound},  \eqref{base5} and Lemmas \ref{2.2}, \ref{2.4} and
\ref{2.10}-\ref{2.15}, it follows that there exist a subsequence
$\mu_j \rightarrow 0$
such that  the corresponding  solution
to the problem \eqref{e1}-\eqref{e4} with $\mu=\mu_j $, still denoted by $(\rho,u,\mathbf{w},\mathbf{b}, \theta)$, converges to
$(\overline\rho, \overline u, \overline{\mathbf{w}},
\overline{\mathbf{b}}, \overline\theta) \in \mathbb{F}$ in the following sense:
\begin{equation*}\label{rate}
 \begin{split}
 &(\rho,u,\mathbf{b}, \theta)\rightarrow (\overline\rho,\overline u,\overline{\mathbf{b}}, \overline\theta)~~
 \hbox{\rm in}~~C^\alpha(\overline Q_T),~\forall\alpha\in(0, 1/4),\\[1mm]
 &(\rho_t,\rho_x,u_x,\mathbf{b}_x,\theta_x)\rightharpoonup(\overline\rho_t, \overline\rho_x,\overline u_x,
 \overline{\mathbf{b}}_x,\overline\theta_x)~~ \hbox{\rm weakly}-*~\hbox{\rm in}~ L^\infty(0, T; L^2(\Omega)),\\[1mm]
  &(u_t,\mathbf{b}_t,\theta_t,u_{xx}, \theta_{xx})\rightharpoonup(\overline u_t,\overline{\mathbf{b}}_t,
  \overline\theta_t,\overline u_{xx},  \overline\theta_{xx})~~ \hbox{\rm  weakly in}~~L^2(Q_T),\\[1mm]
  & \mathbf{b}_{xx}  \rightharpoonup  \overline{\mathbf{b}}_{xx}~~ \hbox{\rm weakly in}~~L^2((\delta,1-\delta)\times(0, T)),~\forall \delta\in(0, 1/2),\\
      \end{split}
  \end{equation*}
 and
  \begin{equation*}
   \begin{split}
 & \mathbf{w}  \rightarrow   \overline{\mathbf{w}}\quad\hbox{\rm  in}~~C^\alpha([\delta,1-\delta]\times[0, T]),~\forall \delta\in\big(0, 1/2\big),~\alpha\in(0, 1/4),\\
 & \mathbf{w}_t  \rightharpoonup   \overline{\mathbf{w}}_t   \quad\hbox{\rm weakly in}~~L^2(Q_T),\\
   &   \mathbf{w}_x \rightharpoonup  \overline{\mathbf{w}}_x\quad\hbox{\rm weakly}-*~ \hbox{\rm in}~ L^\infty(0, T; L^2(\delta,1-\delta)),~\forall \delta\in(0, 1/2),\\
 &\mathbf{w}\rightarrow  \overline{\mathbf{w}}~~ \hbox{\rm strongly in}~~L^r(Q_T),\quad\forall r \in [1, +\infty),\\
 &\sqrt{\mu}\|\mathbf{w}_x\|_{L^2(Q_T)} \rightarrow 0.
   \end{split}
    \end{equation*}

    We now show the strong convergence of $(u_x,\mathbf{b}_x,\theta_x)$ in $L^2(Q_T)$. Multiplying \eqref{e1}$_2$ with $\mu=\mu_j$ by $(u-\overline u)$ and integrating over $Q_T$, we have
\begin{equation*}
\begin{split}
  &\lambda\iint_{Q_T} \big(u_{x}-\overline u_x\big)^2dxdt+\lambda\iint_{Q_T}\overline u_x\big(u_{x}-\overline u_x\big)dxdt\\
  &=-\iint_{Q_T}\Big[(\rho u)_{t}
  +\big(\rho u^2+\gamma\rho\theta +\frac12|\mathbf{b}|^2\big)_x\Big](u-\overline u)   dxdt.
 \end{split}
\end{equation*}
Then, from Lemmas \ref{2.4}, \ref{2.10}  and  \ref{2.14}, we obtain
$$
u_{x}\rightarrow\overline u_x~\hbox{\rm strongly in}~ L^2(Q_T) ~\hbox{\rm as}~\mu_j\rightarrow 0.
$$
Similarly, one has
\begin{equation*}
\begin{split}
&(\mathbf{b}_{x},\theta_{x})\rightarrow (\overline{\mathbf{b}}_x,\overline \theta_x)~\hbox{\rm strongly  in}~ L^2(Q_T) ~\hbox{\rm as}~\mu_j\rightarrow 0.
 \end{split}
\end{equation*}

Thus, one can check that  $(\overline\rho,\overline u,
\overline{\mathbf{w}}, \overline{\mathbf{b}}, \overline\theta)$
  is a solution to the problem \eqref{e1}-\eqref{e4}
with   $\mu=0$ in the  sense of distribution.    On the other hand, one can see from Theorem 1.1(iii) that
problem \eqref{e1}-\eqref{e4}
with  $\mu=0$ admits at most one solution in  $\mathbb{F}$. Therefore, the above convergence holds for any $\mu_j\rightarrow 0$. The proof of Theorem 1.1(ii) is then completed.

\subsection{Proof of Theorem 1.1(iii)}
The proof is divided into several steps.  For convenience,  set
\begin{equation*}
\begin{split}
&\widetilde{\rho}=\rho-\overline\rho,\quad \widetilde{u}= u-\overline
u,\quad\widetilde{\mathbf{w}}=\mathbf{w}-\overline{\mathbf{w}},
\quad\widetilde{\mathbf{b}}=\mathbf{b}-\overline{\mathbf{b}},\quad
\widetilde{\theta}=\theta-\overline\theta,\\
&\mathbb{H}(t)=\|(\widetilde{\rho}, \widetilde{u},\widetilde{\mathbf{w}}, \widetilde{\mathbf{b}}, \widetilde{\theta})\|_{L^2(\Omega)}^2, D(t)=1+ \|(u_x,\mathbf{b}_x, \overline u_x, \overline{\mathbf{b}}_x,\overline{\theta}_x)\|_{L^\infty(\Omega)}^2+\|(\overline u_t, \overline\theta_t, \overline u_x, \overline{\mathbf{b}}_x,\overline{\theta}_x)\|_{L^2(\Omega)}^2.
 \end{split}
\end{equation*}
Clearly, $D(t)\in L^1(0, T)$.\\

\indent{\bf Step 1} Claim that
\begin{equation}\label{rho4}
\begin{split}
  \int_\Omega \widetilde{\rho}^2dx\leq  \epsilon\iint_{Q_t} \widetilde{u}_x^2dxds+\frac{C}{\epsilon}\int_0^tD(s)\mathbb{H}(s)ds,~~ \forall \epsilon>0.
  \end{split}
\end{equation}
From the equations of $\rho$ and $\overline\rho$, we have that $
\widetilde{\rho}_t=-\big(\rho \widetilde{u} +\overline
u\widetilde{\rho}\big)_x. $
Multiplying it by $\widetilde{\rho}$ and integrating over $Q_t$, and using
the Young inequality, we obtain
\begin{equation*}\label{rho1}
\begin{split}
  \frac12\int_\Omega \widetilde{\rho}^2dx=&-\iint_{Q_t}\big(\rho \widetilde{u}_x\widetilde{\rho}+\rho_x\widetilde{u}\widetilde{\rho} \big)dxds -\frac12\iint_{Q_t}\overline
  u_x\widetilde{\rho}^2dxds\\
    \leq&\frac{\epsilon}{4}\iint_{Q_t} \widetilde{u}_x^2dxds+C\iint_{Q_t} \widetilde{u}^2\rho_x^2dxds  +\frac{C}{\epsilon}
    \int_0^t(1+\|\overline u_x\|_{L^\infty(\Omega)})\int_\Omega\widetilde{\rho}^2dxds, \forall \epsilon>0.
  \end{split}
\end{equation*}
From \eqref{rho}, we have, for the above $\epsilon$,
\begin{equation*}\label{rho3}
\begin{split}
   C\iint_{Q_t} \widetilde{u}^2\rho_x^2dxds
   \leq C\int_0^t\|\widetilde{u}\|_{L^\infty(\Omega)}^2 ds\leq \frac{\epsilon}{4}\iint_{Q_t} \widetilde{u}_x^2dxds+\frac{C}{\epsilon}\iint_{Q_t} \widetilde{u}^2dxds.
  \end{split}
\end{equation*}
Thus, the claim \eqref{rho4} holds.\\

\indent{\bf Step 2}  Claim that
\begin{equation}\label{u14}
\begin{split}
   \int_\Omega  \widetilde{u}^2dx+ \iint_{Q_t} \widetilde{u}_x ^2dxds\leq   C \int_0^tD(s)\mathbb{H}(s)ds.
\end{split}
\end{equation}
From the equations of $u$ and $\overline u$, we have
  \begin{equation*}
\begin{split}
 &\big(\rho \widetilde{u} \big)_t+\big(\rho u\widetilde{u}\big)_x+\widetilde{\rho}\overline u_t
 +(\rho u-\overline\rho~\overline u)\overline u_x +\gamma(\rho\theta-\overline\rho\overline\theta)_x
 +\frac12(|\mathbf{b}|^2 -|\overline{\mathbf{b}}|^2 )_x =\lambda \widetilde{u}_{xx}.
  \end{split}
  \end{equation*}
Multiplying it by $\widetilde{u}$ and integrating  over $Q_t$ give
\begin{equation}\label{uu}
\begin{split}
  &\frac12\int_\Omega\rho \widetilde{u}^2dx+\lambda\iint_{Q_t} \widetilde{u}_x ^2dxds\\
  &=-\iint_{Q_t}\widetilde{\rho} \overline u_t \widetilde{u}dxds
 -\iint_{Q_t}(\rho u-\overline\rho~\overline u)\overline u_x\widetilde{u} dxds
 +\gamma\iint_{Q_t} (\rho\theta-\overline\rho\overline\theta) \widetilde{u}_x dxds
 \\
 &\quad+\frac12\iint_{Q_t} (|\mathbf{b}|^2 -|\overline{\mathbf{b}}|^2 ) \widetilde{u}_x dxds   =:\sum_{i=1}^4 I_i.
\end{split}
\end{equation}
Observe that $\rho u-\overline\rho~\overline u=\rho \widetilde{u} +\overline u \widetilde{\rho} $ and
$\rho \theta-\overline\rho\overline\theta =\rho\widetilde{\theta}+\overline\theta \widetilde{\rho} $. Then
\begin{equation}\label{equ700}
\begin{split}
 I_1   & \leq  C \int_0^t\left(\int_\Omega\widetilde{\rho}^2 dx\right)^{1/2}
  \left(\int_\Omega \overline u_t^2 dx\right)^{1/2}  \|\widetilde{u}\|_{L^\infty(\Omega)} dt\\
    &\leq C \int_0^t \left(\int_\Omega \overline u_t^2 dx\right)\left(\int_\Omega\widetilde{\rho}^2 dx\right)dt
    +C\int_0^t \|\widetilde{u}\|_{L^\infty(\Omega)}^2 ds\\
    & \leq C \int_0^tD(s)\mathbb{H}(s)ds
    + \frac\lambda4\iint_{Q_t}   \widetilde{u}_x ^2 dxds,
\end{split}
\end{equation}
 \begin{equation}\label{u1}
\begin{split}
 I_2
  &\leq  C \iint_{Q_t}\Big(|\overline{u}_x|\widetilde{u}^2+|\overline{u}_x|\widetilde{u}\widetilde{\rho}\Big)dxds\\
  &\leq C\iint_{Q_t}\Big((|\overline{u}_x|^2+1)\widetilde{u}^2+ \widetilde{\rho}^2\Big)dxds  \leq C \int_0^tD(s)\mathbb{H}(s)ds,
\end{split}
\end{equation}
and
\begin{equation}\label{u2}
\begin{split}
  I_3&\leq  \frac\lambda4\iint_{Q_t}   \widetilde{u}_x ^2 dxds+C\iint_{Q_t}  (\widetilde{\theta}^2+\widetilde{\rho}^2) dxds \leq \frac\lambda4\iint_{Q_t}   \widetilde{u}_x ^2 dxds+C \int_0^tD(s)\mathbb{H}(s)ds.
\end{split}
\end{equation}
Using  $\|(\mathbf{b},\overline{\mathbf{b}})\|_{L^\infty(Q_T)}\leq C$, we obtain
\begin{equation}\label{b01}
\begin{split}
  I_4&\leq  \frac\lambda4\iint_{Q_t}   \widetilde{u}_x ^2 dxds+C\iint_{Q_t} |\widetilde{\mathbf{b}}|^2 dxds\leq \frac\lambda4\iint_{Q_t}   \widetilde{u}_x ^2 dxds+C \int_0^tD(s)\mathbb{H}(s)ds.
\end{split}
\end{equation}
Substituting \eqref{equ700}-\eqref{b01} into \eqref{uu} completes the proof of \eqref{u14}.\\

\indent{\bf Step 3} Claim that
\begin{equation}\label{Theta}
\begin{split}
 &\int_\Omega\widetilde{\theta}^2dx+ \iint_{Q_t} \widetilde{\theta}_x^2dxds\\
 &\leq C\sqrt{\mu}
 +\epsilon\iint_{Q_t}\big(\widetilde{u}_x^2+|\widetilde{\mathbf{b}}_x|^2\big)dxds
 +\frac{C}{\epsilon}\int_0^tD(s)\mathbb{H}(s)ds,~~\forall\epsilon>0.
\end{split}
\end{equation}
From the equations of $\theta$ and $\overline\theta$, we have
 \begin{equation*}\label{ll}
\begin{split}
& \big(\rho\widetilde{\theta}\big)_t+(\rho u\widetilde{\theta})_x+\widetilde{\rho}\overline\theta_t+(\rho \widetilde{u}+\overline u \widetilde{\rho})\overline\theta_x+\gamma\rho\theta \widetilde{u}_x
 +\gamma\big(\rho\widetilde{\theta}+\widetilde{\rho}\overline\theta\big)\overline u_x \\ &=\big[\kappa(\rho,\theta)\widetilde{\theta}_x\big]_x +\big[(\kappa(\rho,\theta)-\kappa(\overline\rho,\overline\theta))\overline\theta_x\big]_x +\lambda(u_x^2-\overline u_x^2) +\mu|\mathbf{w}_x|^2+\nu(|\mathbf{b}_x|^2 -|\overline{\mathbf{b}}_x|^2).
  \end{split}
  \end{equation*}
 Multiplying it by $\widetilde{\theta}$ and integrating over $Q_t$ give
  \begin{equation}\label{21}
\begin{split}
 & \frac12\int_\Omega\rho\widetilde{\theta}^2dx+ \iint_{Q_t}\kappa\widetilde{\theta}_x^2dxds \\
 &=-\iint_{Q_t}\widetilde{\rho}\widetilde{\theta}\overline\theta_tdxdt-\iint_{Q_t}(\rho \widetilde{u}+\overline u \widetilde{\rho})\widetilde{\theta}\overline\theta_xdxds
 -\gamma\iint_{Q_t} \rho\theta \widetilde{u}_x \widetilde{\theta} dxds\\
&\quad-\gamma\iint_{Q_t}\rho \overline u_x\widetilde{\theta}^2dxds -\gamma\iint_{Q_t}\overline\theta\overline u_x\widetilde{\rho}\widetilde{\theta} dxds
-\iint_{Q_t}\overline\theta_x[\kappa(\rho,\theta)-\kappa(\overline\rho,\overline\theta)]\widetilde{\theta}_xdxds \\
&\quad+\lambda\iint_{Q_t} (u_x+\overline u_x)\widetilde{u}_x \widetilde{\theta} dxds+\mu\iint_{Q_t} |\mathbf{w}_x|^2 \widetilde{\theta} dxds\\
&\quad+\nu\iint_{Q_t}(|\mathbf{b}_x|^2 -|\overline{\mathbf{b}}_x|^2)\widetilde{\theta} dxds=:\sum_{i=1}^9 E_i.
\end{split}
\end{equation}
  By the H\"{o}lder inequality and Young inequality, we have
\begin{equation*}\label{u8}
\begin{split}
 &E_1+E_2+E_5 \leq  C\int_0^t\left(\int_\Omega\big(\widetilde{\rho}^2+ \widetilde{u}^2\big)dx\right)^{1/2}\left(\int_\Omega\widetilde{\theta}^2(\overline\theta_t^2+\overline\theta_x^2+\overline u_x^2)dx\right)^{1/2}dt\\
 &\leq  C\int_0^t \left(\int_\Omega(\overline\theta_t^2+\overline\theta_x^2+\overline u_x^2)dx\right)^{1/2} \left(\int_\Omega(\widetilde{\rho}^2+ \widetilde{u}^2)dx\right)^{1/2}\|\widetilde{\theta}\|_{L^\infty(\Omega)}dt\\
 &\leq  C\int_0^t\left(\int_\Omega(\overline\theta_t^2+\overline\theta_x^2+\overline u_x^2)dx\right)\left(\int_\Omega\big(\widetilde{\rho}^2+ \widetilde{u}^2\big)dx\right)dt
 +\int_0^t\|\widetilde{\theta}\|_{L^\infty(\Omega)}^2ds\\
&\leq  C\int_0^tD(s)\mathbb{H}(s)ds +\frac{\kappa_1}{4}\iint_{Q_t}\widetilde{\theta}_x^2dxds.\\
\end{split}
\end{equation*}
Hence
\begin{equation*}\label{u111}
\begin{split}
   E_3+E_4 +E_7
 &\leq \epsilon\iint_{Q_t}  \widetilde{u}_x ^2dxds + \frac{C}{\epsilon}\int_0^t\big(1+\|u_x\|_{L^\infty(\Omega)}^2+\|\overline u_x\|_{L^\infty(\Omega)}^2\big)\int_{\Omega} \widetilde{\theta}^2dxds\\
 &\leq \epsilon\iint_{Q_t}  \widetilde{u}_x ^2dxds + \frac{C}{\epsilon}\int_0^tD(s)\mathbb{H}(s)ds,\quad \forall\epsilon \in (0, 1).
\end{split}
\end{equation*}
By   $C^{-1}\leq \rho, \overline\rho, \theta, \overline\theta \leq C$, we obtain that
$|\kappa(\rho,\theta)-\kappa(\overline\rho,\overline\theta)|\leq C(|\widetilde{\rho} |+|\widetilde{\theta}|).$
Thus,
 \begin{equation*}\label{u9}
\begin{split}
 E_6 \leq & \frac{\kappa_1}{4}\iint_{Q_t} \widetilde{\theta}_x^2dxds+C\iint_{Q_t}|\overline\theta_x|^2\big(\widetilde{\rho}^2+\widetilde{\theta}^2\big)dxds \\   \leq & \frac{\kappa_1}{4} \iint_{Q_t} \widetilde{\theta}_x^2dxds+C\int_0^t\|\overline\theta_x\|^2_{L^\infty(\Omega)}\int_{\Omega}\big(\widetilde{\rho}^2+\widetilde{\theta}^2\big)dxds\\
    \leq &  \frac{\kappa_1}{4} \iint_{Q_t} \widetilde{\theta}_x^2dxds+C\int_0^tD(s)\mathbb{H}(s)ds.
\end{split}
\end{equation*}
By \eqref{wx4}, we have
\begin{equation*}\label{u10}
\begin{split}
 E_8\leq  C\sqrt{\mu}.
\end{split}
\end{equation*}
For $E_9$, by noticing
$|\mathbf{b}_x|^2-|\overline{\mathbf{b}}_x|^2=(\mathbf{b}_x+\overline{\mathbf{b}}_x)\cdot\widetilde{\mathbf{b}}_x$, we obtain
\begin{equation*}
\begin{split}
 E_9\leq & \epsilon\iint_{Q_t}|\widetilde{\mathbf{b}}_x|^2dxds
 +\frac{C}{\epsilon}\int_0^t\big(\|\mathbf{b}_x\|_{L^\infty(\Omega)}^2+\|\overline{\mathbf{b}}_x\|_{L^\infty(\Omega)}^2\big)
 \int_{\Omega}\widetilde{\theta}^2dxds\\
 \leq & \epsilon\iint_{Q_t}|\widetilde{\mathbf{b}}_x|^2dxds
 +\frac{C}{\epsilon}\int_0^tD(s)\mathbb{H}(s)ds,~~\forall\epsilon>0.
\end{split}
\end{equation*}
Substituting these estimates into \eqref{21} completes the proof of
\eqref{Theta}.\\

\indent{\bf Step 4} Claim that
\begin{equation}\label{0v3}
\begin{split}
  & \int_\Omega |\widetilde{\mathbf{w}}|^2dx   \leq C\sqrt{\mu} +\epsilon\iint_{Q_t} |\widetilde{\mathbf{b}}_x|^2 dxds + \frac{C}{\epsilon}\int_0^tD(s)\mathbb{H}(s)ds,\quad\forall \epsilon>0.
  \end{split}
  \end{equation}
 From the equations for $\mathbf{w}$ and $\overline{\mathbf{w}}$, we have $\rho\widetilde{\mathbf{w}}_t+ \rho u \widetilde{\mathbf{w}}_x+  \rho \widetilde{u} \overline{\mathbf{w}}_x-\widetilde{\mathbf{b}}_x+\frac{\widetilde{\rho}}{\overline\rho}\overline{\mathbf{b}}_x= \mu \mathbf{w}_{xx}.
  $
  Taking the inner product of the identity with $\widetilde{\mathbf{w}}$ and integrating over $Q_t$ give
  \begin{equation}\label{0v33}
\begin{split}
   \frac12\int_\Omega \rho|\widetilde{\mathbf{w}}|^2dx
 & =\iint_{Q_t}  \big(\mu\mathbf{w}_{xx}\cdot\widetilde{\mathbf{w}} - \rho\widetilde{u}\overline{\mathbf{w}}_x\cdot\widetilde{\mathbf{w}}
  +\widetilde{\mathbf{b}}_x\cdot\widetilde{\mathbf{w}}-\frac{\widetilde{\rho}}{\overline\rho}
  \overline{\mathbf{b}}_x \cdot\widetilde{\mathbf{w}}\big)dxds\\
  &\leq C\mu^2\iint_{Q_t} |\mathbf{w}_{xx}|^2dxds+C\iint_{Q_t} \widetilde{u}^2|\overline{\mathbf{w}}_x|^2dxds
 \\
 &\quad +\frac{C}{\epsilon}\int_0^tD(s)\mathbb{H}(s)ds  +\epsilon\iint_{Q_t} |\widetilde{\mathbf{b}}_x|^2 dxds,\quad\forall \epsilon>0.
  \end{split}
  \end{equation}
Since
   \begin{equation*}
\begin{split}
 & |\widetilde{u}(x,t)|= |u(x,t)-\overline u(x,t)|=\Big|\int_0^x\widetilde{u}_x dx\Big|\leq \left(\int_0^1\widetilde{u}_x^2dx\right)^{1/2}\omega^{1/2}(x),
 \forall x \in [0, 1/2],\\
 &|\widetilde{u}(x,t)|= |u(x,t)-\overline u(x,t)|=\Big|\int_x^1\widetilde{u}_x dx\Big|\leq \Big(\int_0^1\widetilde{u}_x^2dx\Big)^{1/2}\omega^{1/2}(x),
 \forall x \in [1/2, 1],\\
 \end{split}
  \end{equation*}
we have $
  |\widetilde{u}(x,t)|^2  \leq \left(\int_0^1\widetilde{u}_x^2dx\right) \omega(x).
  $
  Thus, from Lemma \ref{2.13} and \eqref{u14}, we obtain
   \begin{equation*}
\begin{split}
 \iint_{Q_t} \widetilde{u}^2|\overline{\mathbf{w}}_x|^2dxds
  \leq & \int_0^t\Big(\int_0^1\widetilde{u}_x^2dx\Big) \Big(\int_\Omega|\overline{\mathbf{w}}_x|^2\omega dx\Big)ds
  \leq    C \int_0^tD(s)\mathbb{H}(s)ds.
  \end{split}
  \end{equation*}
Substituting this into \eqref{0v33} completes the proof of \eqref{0v3}.\\

\indent{\bf Step 5} Claim that
 \begin{equation}\label{B}
\begin{split}
  & \int_\Omega |\widetilde{\mathbf{b}}|^2dx +
   \iint_{Q_t} |\widetilde{\mathbf{b}}_x|^2dxds\leq C\int_0^t D(s)\mathbb{H}(s)ds.
  \end{split}
  \end{equation}
    From the equations of $\mathbf{b}$ and $\overline{\mathbf{b}}$, we have that $   \widetilde{\mathbf{b}}_t+  \big(u \widetilde{\mathbf{b}}\big)_x
+  \big(\widetilde{u}\overline{\mathbf{b}}\big)_x- \widetilde{\mathbf{w}}_x-\nu\widetilde{\mathbf{b}}_{xx}=0.
  $
  Taking the inner product of the identity with $\widetilde{\mathbf{b}}$ and integrating over $Q_t$ give
  \begin{equation*}
\begin{split}
 \frac12\int_\Omega |\widetilde{\mathbf{b}}|^2dx +
  \nu\iint_{Q_t} |\widetilde{\mathbf{b}}_x|^2dxds
 & = -\frac12\iint_{Q_t}u_x|\widetilde{\mathbf{b}}|^2dxds+\iint_{Q_t} \big(\widetilde{u}  \overline{\mathbf{b}}\cdot\widetilde{\mathbf{b}}-\widetilde{\mathbf{b}}_x \cdot\widetilde{\mathbf{w}}\big)dxds\\
  &\leq  \frac{\nu}{2}\iint_{Q_t} |\widetilde{\mathbf{b}}_x|^2 dxds +C\int_0^t D(s)\mathbb{H}(s)ds.
  \end{split}
  \end{equation*}
  Thus, the  claim \eqref{B} follows.

  Combining the above five steps and taking a small
  constant $\epsilon>0$, we complete  the proof of Theorem 1.1(iii)
  by the Gronwall inequality. Therefore, the proof of Theorem 1.1 is  completed.

\section{Proof  of Theorem  1.3}

 From  $\sup\limits_{0<t<T}\int_\Omega\big(|\mathbf{w}_x|^2+|\overline{\mathbf{w}}_x|^2\big)\omega dx
 \leq C$  obtained in Theorem 1.1, we have
\begin{equation*}\label{equ7}
\begin{split}
  \sup\limits_{0<t<T}\Big(\int_\delta^{1-\delta}(|\mathbf{w}_x|^2 +|\overline{\mathbf{w}}_x|^2)dx\Big)
 \leq \frac{C}{\delta},\quad\forall \delta \in (0, 1/2).\\
\end{split}
\end{equation*}
Using the embedding theorem, Theorem 1.1(iii) and the H\"{o}lder  inequality, we have
\begin{equation*}
\begin{split}
 \|\mathbf{w}-\overline{\mathbf{w}}\|_{L^\infty(\delta,1-\delta)}^2\leq &C\Big(\int_\Omega|\mathbf{w}-\overline{\mathbf{w}}|^2dx
 +\int_\delta^{1-\delta}|\mathbf{w}-\overline{\mathbf{w}}||\mathbf{w}_x-\overline{\mathbf{w}}_x|dx\Big)\\
  \leq&C\sqrt{\mu}+C\Big(\int_\delta^{1-\delta}|\mathbf{w}-\overline{\mathbf{w}}|^2dx\int_\delta^{1-\delta}|\mathbf{w}_x-\overline{\mathbf{w}}_x|^2dx\Big)^{1/2}\\
  \leq &C\sqrt{\mu}+C\Big(\frac{\sqrt{\mu}}{\delta}\Big)^{1/2},\quad\forall \delta \in (0, 1/2).\\
\end{split}
\end{equation*}
This completes the proof of Theorem 1.3.

\section*{Acknowledgments}
 The research of this paper was  supported  by  the NSFC (11731008, 11571062, 11271381, 11431015),  the Program for Liaoning
Innovative Talents in University, Guangdong Natural Science
Fund (2014A030313161)   and  the Fundamental Research Fund  for the Central Universities (DMU).

\par

\end{document}